\documentclass[11pt]{amsart}

\usepackage{amssymb,amsmath,amsthm,newlfont}
\usepackage[shortlabels]{enumitem}
\usepackage{hyperref}

\theoremstyle{plain}
\newtheorem{theorem}{Theorem}[section]
\newtheorem{proposition}[theorem]{Proposition}
\newtheorem{corollary}[theorem]{Corollary}
\newtheorem{lemma}[theorem]{Lemma}
\newtheorem{question}[theorem]{Question}

\theoremstyle{definition}
\newtheorem{definition}[theorem]{Definition}

\theoremstyle{remark}
\newtheorem*{remark}{Remark}

\numberwithin{equation}{section}

\renewcommand{\hat}{\widehat}
\renewcommand{\tilde}{\widetilde}

\newcommand{\CC}{\mathbb{C}}
\newcommand{\DD}{\mathbb{D}}
\newcommand{\RR}{\mathbb{R}}

\newcommand{\Coo}{\hat{\CC}}
\newcommand{\cD}{\mathcal{D}}
\newcommand{\cH}{\mathcal{H}}
\newcommand{\cL}{\mathcal{L}}
\newcommand{\cP}{\mathcal{P}}
\newcommand{\cQ}{\mathcal{Q}}
\newcommand{\cU}{\mathcal{U}}
\newcommand{\cV}{\mathcal{V}}
\renewcommand{\Re}{\operatorname{Re}}
\renewcommand{\Im}{\operatorname{Im}}
\DeclareMathOperator{\supp}{supp}
\DeclareMathOperator{\diam}{diam}
\DeclareMathOperator{\loc}{loc}

\begin{document}

\date{6 April 2023}

\title{Holomorphic motions, dimension, area and quasiconformal mappings}

\author[A. Fuhrer]{Aidan Fuhrer}
\address{D\'epartement de math\'ematiques et de statistique, Universit\'e Laval,
Qu\'ebec City (Qu\'ebec),  Canada G1V 0A6.}
\email{aidan.fuhrer.1@ulaval.ca}

\author[T. Ransford]{Thomas Ransford}
\address{D\'epartement de math\'ematiques et de statistique, Universit\'e Laval,
Qu\'ebec City (Qu\'ebec),  Canada G1V 0A6.}
\email{ransford@mat.ulaval.ca}

\author[M. Younsi]{Malik Younsi}
\address{Department of Mathematics, University of Hawaii Manoa, Honolulu, HI 96822, USA.}
\email{malik.younsi@gmail.com}

\thanks{Fuhrer supported by an NSERC Canada Graduate Scholarship.
Ransford supported by grants from NSERC and the Canada Research Chairs program.
Younsi supported by NSF Grant DMS-2050113.}

\begin{abstract}
We describe the variation of the Minkowski, packing
and Hausdorff dimensions of a
set moving under a holomorphic motion,
as well as the variation of its area.
Our method provides a new, unified approach to various celebrated theorems about quasiconformal mappings, including the work of Astala on the distortion of area and dimension under quasiconformal mappings and the work of Smirnov on the dimension of quasicircles.
\end{abstract}

\keywords{Holomorphic motion, area, Hausdorff dimension, packing dimension,
Minkowski dimension, harmonic function, quasiconformal mapping, quasicircle}

\makeatletter
\@namedef{subjclassname@2020}{\textup{2020} Mathematics Subject Classification}
\makeatother

\subjclass[2020]{Primary 37F44, Secondary 30C62, 31A05, 28A78}

\maketitle

\section{Introduction}\label{S:intro}

In what follows,
we write $\DD,\CC$ and $\Coo$ for the open unit disk,
the complex plane and the Riemann sphere, respectively.

\begin{definition}\label{D:holomotion}
Let $A$ be a subset of  $\Coo$.
A \textit{holomorphic motion} of $A$ is a map
$f:\DD\times A \to \Coo$ such that:
\begin{enumerate}[(i)]
\item for each fixed $z\in A$,
the map $\lambda\mapsto f(\lambda,z)$
is holomorphic on $\DD$;
\item for each fixed $\lambda \in\DD$,
the map $z\mapsto f(\lambda,z)$ is
injective on $A$;
\item for all $z\in A$, we have $f(0,z)=z$.
\end{enumerate}
We write $f_\lambda(z):=f(\lambda,z)$
and $A_\lambda:=f_\lambda(A)$.
We sometimes abuse terminology
and call the set-valued map $\lambda\mapsto A_\lambda$
a holomorphic motion of~$A$.
\end{definition}

Holomorphic motions were introduced by Ma\~{n}\'{e}, Sad and Sullivan \cite{MSS83}.
They established the so-called $\lambda$-lemma,
which says that every holomorphic motion $f:\DD \times A \to \Coo$
has an extension to a holomorphic motion $F:\DD \times \overline{A} \to \Coo$,
and that $F$ is jointly continuous in $(\lambda,z)$.
They  exploited this result to  describe the variation of
Julia sets of holomorphic families of hyperbolic rational maps.
Holomorphic motions have since been applied in
various other areas of dynamical systems, notably
in describing the variation of limit sets of Kleinian groups, see e.g.\ \cite[\S12.2.1]{AIM09}.

Consider the following  problem.
Let $\lambda\mapsto A_\lambda$ be a holomorphic motion such that
$A_\lambda\subset\CC$ for all $\lambda\in\DD$.
What sort of functions are $\lambda\mapsto\dim(A_\lambda)$ and $\lambda\mapsto|A_\lambda|$?
Here $|\cdot|$ denotes the area measure (two-dimensional Lebesgue measure) and $\dim(\cdot)$ can denote any reasonable notion of dimension.
Various aspects of this problem have been treated
in the literature, see for example
\cite{As94,AZ95,BR06,CT23,EH95,GV73,Ka00,Pr07,PS11,Ra93,Ru82,Sm10}.
We shall discuss some of these contributions in more detail later.

In this article, we shall be mainly interested in three notions of dimension, namely
the Minkowski, packing and Hausdorff dimensions. To  state our results, it is convenient to introduce another definition.

\begin{definition}\label{D:infharmonic}
Let $D$ be a domain in $\CC$. A positive function $u:D\to[0,\infty)$ is called
\emph{inf-harmonic} if there exists a  family $\cH$
of harmonic functions on $D$ such that $u(\lambda)=\inf_{h\in\cH}h(\lambda)$
for all $\lambda\in D$.
\end{definition}

In Theorems~\ref{T:Minkowski}--\ref{T:Hausdorff},
we consider a holomorphic motion $f:\DD\times A\to\CC$
of a subset $A$ of $\CC$, and write $A_\lambda:=f_\lambda(A)$.

Our first result describes the variation of the Minkowski dimension,
or more precisely the upper Minkowski dimension $\overline{\dim}_M$,
of a bounded set moving under a holomorphic motion.

\begin{theorem}\label{T:Minkowski}
Let $\lambda\mapsto A_\lambda$  be a holomorphic motion of a bounded subset $A$ of $\CC$.
Then $A_\lambda$ is bounded for all $\lambda\in\DD$, and
either $\overline{\dim}_M(A_\lambda)=0$ for all $\lambda\in\DD$,
or $\lambda\mapsto 1/ \overline{\dim}_M(A_\lambda)$ is an inf-harmonic function on $\DD$.
\end{theorem}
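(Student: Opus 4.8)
The plan is as follows. We may assume that $A$ is infinite (otherwise $A_\lambda$ is finite and $\overline{\dim}_M(A_\lambda)=0$ for every $\lambda$), and, since upper Minkowski dimension does not change under taking closures, that $A$ is compact. By S{\l}odkowski's theorem we extend $f$ to a holomorphic motion $\hat f\colon\DD\times\Coo\to\Coo$; then each $\hat f_\lambda$ is a quasiconformal homeomorphism of $\Coo$ of dilatation at most $\tfrac{1+|\lambda|}{1-|\lambda|}$ — in particular quasisymmetric with gauge depending only on $|\lambda|$ — and $\hat f$ is jointly continuous. The one structural fact about holomorphic motions we exploit is that for fixed $x,y\in A$ with $x\ne y$ the map $\lambda\mapsto f_\lambda(x)-f_\lambda(y)$ is holomorphic and, by injectivity, nowhere zero on $\DD$, so $\lambda\mapsto\log|f_\lambda(x)-f_\lambda(y)|$ is harmonic on $\DD$. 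Finally, the boundedness of each $A_\lambda$ follows from a standard normality argument: were $f_{\lambda_0}(z_n)\to\infty$ for some $z_n\in A$, one picks $a,b\in A$ avoided by every subsequential limit of $(z_n)$ and applies Montel's theorem to the functions $\lambda\mapsto(f_\lambda(z_n)-f_\lambda(a))/(f_\lambda(b)-f_\lambda(a))$, which omit $0,1,\infty$, to reach a contradiction at $\lambda_0$.

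Next, the dichotomy and the reduction to an inf-harmonic assertion. Given $\lambda_0\in\DD$ and an automorphism $\phi$ of $\DD$ with $\phi(0)=\lambda_0$, the map $(\mu,w)\mapsto f_{\phi(\mu)}(f_{\lambda_0}^{-1}(w))$ is a holomorphic motion of $A_{\lambda_0}$ whose time-$\mu$ image is $A_{\phi(\mu)}$; the time-$\mu$ map of its S{\l}odkowski extension is $\tfrac{1+|\mu|}{1-|\mu|}$-quasiconformal, hence locally H\"older of exponent $\tfrac{1-|\mu|}{1+|\mu|}$, and a routine covering estimate gives
\[
\overline{\dim}_M(A_\lambda)\le\overline{\dim}_M(A_{\lambda_0})\cdot\frac{1+\rho(\lambda_0,\lambda)}{1-\rho(\lambda_0,\lambda)}\qquad(\lambda\in\DD),
\]
with $\rho$ the pseudohyperbolic distance. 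Consequently $\overline{\dim}_M(A_\lambda)$ is either identically $0$ or everywhere positive; in the latter case $\lambda\mapsto 1/\overline{\dim}_M(A_\lambda)$ is a finite positive function, and it is this that we must show is inf-harmonic.

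To do so, the plan is to realise $\overline{\dim}_M(A_\lambda)$ as a limit superior of ``pressures''. For a finite collection $\mathcal R$ of dyadic squares of bounded overlap covering $A$, choose for each $Q\in\mathcal R$ marked points $x_Q,y_Q\in A$ well separated at the scale of $Q$, and set $\Psi_{\mathcal R}(\lambda,s):=\sum_{Q\in\mathcal R}|f_\lambda(x_Q)-f_\lambda(y_Q)|^{s}$. For fixed $s\ge0$ this is subharmonic in $\lambda$ (a finite sum of functions $e^{sh}$ with $h$ harmonic), and for fixed $\lambda$ it is convex and strictly decreasing in $s$ with value $\#\mathcal R$ at $s=0$; hence, once $\#\mathcal R\ge2$ and the distances involved are $<1$, there is a unique $s_{\mathcal R}(\lambda)>0$ with $\Psi_{\mathcal R}(\lambda,s_{\mathcal R}(\lambda))=1$. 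When the $|f_\lambda(x_Q)-f_\lambda(y_Q)|$ are mutually comparable — which, by quasisymmetry, holds for the collections obtained by stopping at a prescribed image scale — one checks that $1/s_{\mathcal R}(\lambda)$ equals, up to a bounded factor, $\tfrac{1}{\log\#\mathcal R}\min_{Q\in\mathcal R}\bigl(-\log|f_\lambda(x_Q)-f_\lambda(y_Q)|\bigr)$, a positive multiple of a finite infimum of harmonic functions, hence inf-harmonic. Using quasisymmetry to compare arbitrary-scale coverings of $A_\lambda$ with the stopping-time collections, one then identifies $\overline{\dim}_M(A_\lambda)$ with the limit superior of $s_{\mathcal R}(\lambda)$ over a suitable increasing sequence of such collections; reciprocating and assembling the resulting harmonic functions into a single family — using that the class of inf-harmonic functions is stable under finite infima, the operation corresponding to $\overline{\dim}_M$ of a union being a maximum — yields the claim.

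The main obstacle is exactly this identification, carried out with uniform potential-theoretic control: the stopping-time collections that compute $\overline{\dim}_M(A_\lambda)$ depend on $\lambda$, and the images under $\hat f_\lambda$ of a fixed collection of squares need not be comparable in diameter, so the crude ``maximum over squares'' bound is insufficient. This is precisely the point at which the argument makes contact with Astala-type distortion theory, and handling it — weighing the quasisymmetry of $\hat f_\lambda$ against the harmonicity of $\lambda\mapsto\log|f_\lambda(x)-f_\lambda(y)|$ in a quantitatively sharp way — is where I expect the substance of the proof to lie.
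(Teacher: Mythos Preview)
Your proposal is a plan rather than a proof, and you say so yourself at the end: the ``main obstacle'' is left unresolved. But the plan also contains a real error in its claimed route around that obstacle. You assert that when the distances $|f_\lambda(x_Q)-f_\lambda(y_Q)|$ are mutually comparable, the implicitly defined $1/s_{\mathcal R}(\lambda)$ equals \emph{up to a bounded factor} a positive multiple of $\min_Q\bigl(-\log|f_\lambda(x_Q)-f_\lambda(y_Q)|\bigr)$, and you then conclude ``hence inf-harmonic''. Being comparable to an inf-harmonic function does not make a function inf-harmonic; inf-harmonicity is not a quasi-invariant. Moreover, the stopping-time collection $\mathcal R$ that makes the distances comparable at a given $\lambda$ depends on $\lambda$, so $s_{\mathcal R}$ is not even a well-defined function of $\lambda$ for a fixed $\mathcal R$ in that regime. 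These two issues together mean your strategy for showing $1/s_{\mathcal R}$ is inf-harmonic, and then assembling such functions into $1/\overline{\dim}_M(A_\lambda)$, does not go through.

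What you are missing is twofold. First, there is an \emph{exact} implicit-function statement: if each $\log(1/a_j)$ is inf-harmonic and $s(\lambda)$ is defined by $\sum_j a_j(\lambda)^{s(\lambda)}=c$, then $1/s$ is inf-harmonic on the nose (the proof linearises via the supporting lines of $y\mapsto e^{-y}$). With $a_j(\lambda)=\diam f_\lambda(D_j)/M$ this gives inf-harmonicity of $1/s_n$ without any comparability hypothesis and without stopping times. Second, you do not need to \emph{identify} $\overline{\dim}_M(A_\lambda)$ with a limit of pressures. It is enough to build, for each $\lambda_0$, an inf-harmonic $u$ with $u(\lambda_0)=1/\overline{\dim}_M(A_{\lambda_0})$ and $u\ge 1/\overline{\dim}_M(A_\lambda)$ everywhere; this is a one-sided inequality, obtained by working with finite \emph{packings} of $A$ by disks (via Tricot's characterisation $\cP_0^s(A)=\infty$ iff $s<\overline{\dim}_M(A)$), transporting them by $f_\lambda$, and using quasisymmetry only to replace $f_\lambda(D)$ by a disk of comparable diameter centred in $A_\lambda$. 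Your M\"obius-automorphism reparametrisation then moves the base point $\lambda_0$, and the infimum of all these $u$'s is $1/\overline{\dim}_M(A_\lambda)$. This sidesteps entirely the $\lambda$-dependence of optimal covers that you identify as the obstacle.
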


From this theorem, we deduce an analogous result for the
packing dimension $\dim_P$.

\begin{theorem}\label{T:packing}
Let $\lambda\mapsto A_\lambda$  be a holomorphic motion of a subset $A$ of $\CC$.
Then either $\dim_P(A_\lambda)=0$ for all $\lambda\in\DD$,
or $\lambda\mapsto 1/ \dim_P(A_\lambda)$ is an inf-harmonic function on $\DD$.
\end{theorem}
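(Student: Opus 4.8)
The plan is to deduce Theorem~\ref{T:packing} from Theorem~\ref{T:Minkowski} by means of the standard identification of packing dimension with the \emph{modified upper Minkowski dimension}: for every set $E\subseteq\CC$,
\[
\dim_P(E)=\inf\Bigl\{\sup_i\overline{\dim}_M(E_i)\ :\ E=\textstyle\bigcup_{i\ge1}E_i\Bigr\},
\]
where, moreover, the value is unchanged if the pieces $E_i$ are required to be bounded, since replacing a cover $\{E_i\}$ by $\{E_i\cap\{|z|\le n\}\}_{i,n\ge1}$ gives a countable cover by bounded sets without raising any $\overline{\dim}_M(E_i)$. Recall also that $\dim_P$ is countably stable.

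First I would transfer this to the motion. If $\{C_i\}_{i\ge1}$ is a countable cover of $A$ by bounded sets, then for each $i$ the map $\lambda\mapsto f_\lambda(C_i)$ is a holomorphic motion of the bounded set $C_i$, so by Theorem~\ref{T:Minkowski} each $f_\lambda(C_i)$ is bounded and $\{f_\lambda(C_i)\}_i$ covers $A_\lambda=f_\lambda(A)$. Conversely, any countable cover of $A_\lambda$ by bounded sets pulls back through the injection $f_\lambda$, and then refines as above to a countable cover of $A$ by bounded sets without raising the relevant dimensions. Together with the formula above this gives, for every $\lambda\in\DD$,
\[
\frac{1}{\dim_P(A_\lambda)}=\sup_{\{C_i\}}\ \inf_i\ \frac{1}{\overline{\dim}_M\bigl(f_\lambda(C_i)\bigr)},
\]
the supremum being over all countable covers $\{C_i\}$ of $A$ by bounded sets --- crucially a $\lambda$-independent index set.

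By Theorem~\ref{T:Minkowski} applied to the piece $C_i$, either $\overline{\dim}_M(f_\lambda(C_i))=0$ for all $\lambda$, or $\lambda\mapsto1/\overline{\dim}_M(f_\lambda(C_i))$ is inf-harmonic on $\DD$. Discarding the degenerate pieces (which contribute the value $+\infty$ to the inner infimum), we find that for each fixed cover the function $\lambda\mapsto\inf_i1/\overline{\dim}_M(f_\lambda(C_i))$ is an infimum of inf-harmonic functions, hence inf-harmonic. This also settles the dichotomy: an inf-harmonic function, being an infimum of positive harmonic functions, inherits a Harnack inequality $u(\lambda_1)\ge c(\lambda_0,\lambda_1)\,u(\lambda_0)$ with $c(\lambda_0,\lambda_1)>0$; so if $\dim_P(A_{\lambda_0})=0$, then for each $\delta>0$ we may pick a cover with $\overline{\dim}_M(f_{\lambda_0}(C_i))<\delta$ for all $i$, deduce $\overline{\dim}_M(f_{\lambda_1}(C_i))\le\delta/c(\lambda_0,\lambda_1)$ for all $i$, and let $\delta\to0$ to conclude $\dim_P(A_{\lambda_1})=0$.

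It remains to treat the case $\dim_P(A_\lambda)>0$ for all $\lambda$ and to show that $\lambda\mapsto1/\dim_P(A_\lambda)$, exhibited above as a \emph{supremum} of inf-harmonic functions, is itself inf-harmonic. I expect this to be the main obstacle, since inf-harmonicity is not preserved by arbitrary suprema. The crucial extra feature is that the supremum is \emph{directed}: the common refinement $\{C_i\cap C_j'\}$ of two admissible covers (again intersected with the balls $\{|z|\le n\}$ to keep the pieces bounded) is admissible and its associated inf-harmonic function dominates those coming from $\{C_i\}$ and $\{C_j'\}$. Thus the theorem reduces to the assertion that a directed supremum of inf-harmonic functions with values in $[\tfrac12,\infty]$ is inf-harmonic --- a closure property of the class that would in particular follow if the infimum defining the modified upper Minkowski dimension of $A_\lambda$ were attained, uniformly in $\lambda$, by a single countable cover of $A$. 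I would attack this through the behaviour of inf-harmonic functions under directed suprema --- using in particular that each inf-harmonic $u$ dominates, on every closed disk $\overline\Delta\subset\DD$, the harmonic function with boundary data $u|_{\partial\Delta}$, a property inherited by directed suprema via monotone convergence of Poisson integrals --- the genuinely delicate point, which I expect to require the most care, being the control of upper semicontinuity in the passage to the supremum.
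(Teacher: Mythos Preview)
Your reduction is exactly the paper's: the packing--Minkowski formula (Proposition~\ref{P:Minkowskipacking}) plus bijectivity of $f_\lambda$ gives the displayed $\sup$--$\inf$ expression for $1/\dim_P(A_\lambda)$ over a $\lambda$-independent index set of covers; Theorem~\ref{T:Minkowski} makes each inner infimum inf-harmonic; and the common-refinement observation makes the outer family upward-directed. The dichotomy you derive via Harnack is also correct. So the architecture matches the paper line for line, and what remains is precisely the closure property ``an upward-directed supremum of inf-harmonic functions is inf-harmonic'' (Proposition~\ref{P:closure}\,\ref{I:sup} in the paper).

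The gap is in your plan for that last step. Poisson-domination plus upper-semicontinuity control would at best yield that the supremum $u$ is positive, continuous and superharmonic, and that is \emph{strictly weaker} than inf-harmonic: for instance $u(z)=\min\bigl(5,\log(2/|z|)\bigr)$ on $\DD$ is positive, continuous and superharmonic, yet $u(1/2)=\log 4<5/3=u(0)/\tau_\DD(0,1/2)$, violating Proposition~\ref{P:Harnackineq}, so it cannot be inf-harmonic. In fact the upper-semicontinuity worry is a red herring: every function in the directed family obeys the \emph{same} Harnack inequality, so the supremum is automatically finite and continuous. What must actually be produced is, for each $\lambda_0$, a global harmonic majorant of $u$ touching $u$ at~$\lambda_0$, and Poisson comparisons do not give that. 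The paper obtains it via normal families: directedness and a countable dense set yield an increasing sequence $v_n$ from the family with $v_n\to u$ pointwise; then one invokes that pointwise limits of inf-harmonic functions are inf-harmonic (Proposition~\ref{P:closure}\,\ref{I:limit}), itself proved via the compactness result Proposition~\ref{P:normalfamily}, which manufactures the touching harmonic majorants as locally uniform limits of harmonic majorants of the~$v_n$.
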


From these theorems, we obtain the following corollary.

\begin{corollary}\label{C:subharmonic}
Under the respective assumptions of Theorems~\ref{T:Minkowski} and \ref{T:packing},
$\overline{\dim}_M(A_\lambda)$ and $\dim_P(A_\lambda)$
are continuous, logarithmically  subharmonic functions of $\lambda\in\DD$
(and hence also subharmonic on $\DD$).
In particular, if either these functions
attains a maximum on $\DD$, then it is constant.
\end{corollary}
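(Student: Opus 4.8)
My plan is to handle the Minkowski and packing cases simultaneously. Write $g(\lambda):=\dim(A_\lambda)$, where $\dim$ denotes $\overline{\dim}_M$ or $\dim_P$ as appropriate, so that Theorems~\ref{T:Minkowski} and~\ref{T:packing} tell us that either $g\equiv0$ on $\DD$---a case in which every assertion is trivial---or $u:=1/g$ is inf-harmonic on $\DD$. In the latter case I would first observe that $g\le2$ everywhere (a planar set has Minkowski and packing dimension at most $2$; for $\overline{\dim}_M$ one also uses the boundedness of $A_\lambda$ from Theorem~\ref{T:Minkowski}), so $u\ge1/2$ on $\DD$. Fixing a family $\cH$ of harmonic functions with $u=\inf_{h\in\cH}h$, the inequality $u\ge1/2$ forces $h\ge u\ge1/2$ for every $h\in\cH$, so $\cH$ consists of \emph{positive} harmonic functions. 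It then remains to establish three things: that $u$ is continuous; that $\log u$ is superharmonic; and that the maximum principle applies.

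The continuity of $u$ is the step I expect to be the main obstacle: an inf-harmonic function is automatically only \emph{upper} semicontinuous, and the pointwise infimum structure by itself does not give lower semicontinuity---one has to exploit the harmonicity of the members of $\cH$. I would argue by contradiction. If $u$ fails to be lower semicontinuous at some $\lambda_0\in\DD$, there are $\varepsilon>0$ and $\lambda_n\to\lambda_0$ with $u(\lambda_n)\le u(\lambda_0)-\varepsilon$, and one may choose $h_n\in\cH$ with $h_n(\lambda_n)\le u(\lambda_0)-\varepsilon/2$. Fixing $\rho>0$ with $\overline{B(\lambda_0,2\rho)}\subset\DD$, Harnack's inequality applied to the positive harmonic functions $h_n$, together with the bound $h_n(\lambda_n)\le u(\lambda_0)$, shows that the $h_n$ are uniformly bounded on $B(\lambda_0,\rho)$, hence form a normal family there; passing to a locally uniformly convergent subsequence $h_{n_k}\to h$, with $h$ harmonic, we obtain on the one hand $h(\lambda_0)=\lim_k h_{n_k}(\lambda_0)\ge u(\lambda_0)$ (since each $h_{n_k}\in\cH$), and on the other hand $h(\lambda_0)=\lim_k h_{n_k}(\lambda_{n_k})\le u(\lambda_0)-\varepsilon/2$, a contradiction. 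It is exactly here that the positive lower bound on $u$, and hence on the elements of $\cH$, is used.

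Granting the continuity of $u$, the remaining steps are routine. For a positive harmonic function $h$, the composition $\log h$ is superharmonic (because $\log$ is concave and increasing), so $\log u=\inf_{h\in\cH}\log h$ is a pointwise infimum of superharmonic functions; it is continuous and bounded below by $\log(1/2)$, and for any disk $\overline{B(\lambda,r)}\subset\DD$, averaging each $\log h$ over $\partial B(\lambda,r)$ and then taking the infimum over $h\in\cH$ shows that $\log u(\lambda)\ge\frac1{2\pi}\int_0^{2\pi}\log u(\lambda+re^{i\theta})\,d\theta$. Hence $\log u$ is superharmonic on $\DD$, so $\log g=-\log u$ is subharmonic; that is, $g=\dim(A_\lambda)$ is logarithmically subharmonic. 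Since $t\mapsto e^t$ is convex and increasing, $g=\exp(\log g)$ is then also subharmonic, and it is continuous because $u$ is. Finally, $\DD$ being a domain, if the subharmonic function $g$ attains its maximum at a point of $\DD$ the maximum principle for subharmonic functions forces $g$ to be constant (while the case $g\equiv0$ is constant to begin with). I would also record the byproduct that $u=1/\dim(A_\lambda)$ is itself superharmonic, being a continuous infimum of (super)harmonic functions.
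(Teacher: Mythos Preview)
Your proof is correct and follows the same overall architecture as the paper's: dispose of the identically-zero case, then show that an inf-harmonic $u=1/g$ is continuous and that $\log u$ is superharmonic, and finish with the maximum principle.

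The paper's route is more direct on both substantive points, however. For continuity, rather than your normal-families extraction, the paper simply transfers Harnack's inequality from the harmonic $h\in\cH$ to their infimum $u$: since $h(\lambda_1)\le\tau_D(\lambda_1,\lambda_2)h(\lambda_2)$ for every $h\ge u$, taking the infimum gives $u(\lambda_1)\le\tau_D(\lambda_1,\lambda_2)u(\lambda_2)$, and continuity of $\tau_D$ on the diagonal yields continuity of $u$ immediately (this is the paper's Proposition~\ref{P:Harnackineq} and Corollary~\ref{C:Harnackineq}). For logarithmic subharmonicity, the paper uses only the superharmonicity of $u$ together with Jensen's inequality for the concave function $\log$: from $u(\lambda)\ge\frac{1}{2\pi}\int u$ one gets $-\log u(\lambda)\le-\log\bigl(\frac{1}{2\pi}\int u\bigr)\le\frac{1}{2\pi}\int(-\log u)$. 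This argument has the advantage of applying to any positive superharmonic $u$, not just to infima of harmonic families, whereas your argument exploits the explicit representation $u=\inf_h h$ to write $\log u=\inf_h\log h$.
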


\begin{proof}
As we shall see, an inf-harmonic function is  a
continuous superharmonic function.
Using Jensen's inequality, it is easy to see that,
if $1/v$ is a positive superharmonic function,
then $\log v$ is a subharmonic function, and hence also $v$.
The last part of the corollary is a consequence of the maximum
principle for subharmonic functions.
\end{proof}

For the Hausdorff dimension $\dim_H$, there is a result
similar to Theorems~\ref{T:Minkowski} and \ref{T:packing},
but with a  weaker conclusion.

\begin{theorem}\label{T:Hausdorff}
Let $\lambda\mapsto A_\lambda$  be a holomorphic motion of a subset $A$ of $\CC$.
Then either $\dim_H(A_\lambda)=0$ for all $\lambda\in\DD$,
or $\dim_H(A_\lambda)>0$ for all $\lambda\in\DD$.
In the latter case, $\lambda\mapsto(1/ \dim_H(A_\lambda)-1/2)$
is the supremum of a family of inf-harmonic functions on $\DD$.
\end{theorem}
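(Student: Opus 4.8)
The plan is to reduce the Hausdorff statement to the packing (or Minkowski) statement by a localization and Frostman-type argument. The obstruction to a clean analogue of Theorem~\ref{T:packing} for $\dim_H$ is that Hausdorff dimension is not finitely stable in the way needed: a set can have large packing dimension but small Hausdorff dimension, and, more importantly, the dimension of a holomorphic image is governed by Astala-type distortion only \emph{locally}. So first I would note the dichotomy: if $\dim_H(A_{\lambda_0})=0$ for some $\lambda_0$, then by the known distortion estimates for holomorphic motions (the quasiconformal bound $\dim_H(A_\lambda)\le$ an explicit increasing function of $\dim_H(A_{\lambda_0})$ and the modulus $|\lambda|$, which is exactly the Smirnov/Astala-type inequality that the whole paper is built around), we get $\dim_H(A_\lambda)=0$ for all $\lambda$; this uses that a holomorphic motion over $\DD$ restricts, near any point, to a $K$-quasiconformal map with $K$ controlled by the hyperbolic distance. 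That disposes of the first alternative.

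**Constructing the inf-harmonic minorants.**
For the substantive case, fix a point $\mu\in\DD$ and suppose $\dim_H(A_\mu)>0$. Pick any $t<\dim_H(A_\mu)$. By Frostman's lemma there is a compact subset $E\subset A$ and a Borel probability measure $\nu$ supported on $E_\mu:=f_\mu(E)$ with $\nu(B(x,r))\le Cr^t$ for all $x,r$. Now I would run the argument that underlies Theorem~\ref{T:Minkowski}/\ref{T:packing} — presumably an energy-integral or covering estimate combined with the harmonicity of $\lambda\mapsto\log|\partial_\lambda \log f_\lambda|$-type quantities coming from the holomorphic dependence — but applied to this fixed measure $\nu$ transported along the motion $f_\lambda|_E$. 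The output should be: the function $\lambda\mapsto g_{E,\nu}(\lambda)$, defined so that $g_{E,\nu}(\mu)$ recovers (essentially) $1/t-1/2$ and $g_{E,\nu}(\lambda)\le 1/\dim_H(A_\lambda)-1/2$ everywhere, is inf-harmonic on $\DD$. The shift by $1/2$ is forced because the relevant distortion exponent for a \emph{measure} (as opposed to for box-counting) involves $2t/(2-t+t|\lambda|\cdots)$-type expressions; rewriting Astala's distortion inequality $\frac1{\dim_H(A_\lambda)}-\frac12 \ge e^{-2\rho(0,\lambda)}\bigl(\frac1{\dim_H(A_0)}-\frac12\bigr)$-style bound in the $1/\dim - 1/2$ variable is precisely what linearizes it into something harmonic-friendly, and I would track the constants to see that $1/t - 1/2$ is exactly the natural variable.

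**Taking the supremum.**
Finally, let $\lambda\mapsto 1/\dim_H(A_\lambda)-1/2$ be compared with $\sup$ over all pairs $(E,\nu)$ (equivalently, over all $\mu\in\DD$ and all $t<\dim_H(A_\mu)$) of the inf-harmonic functions $g_{E,\nu}$ constructed above. Each $g_{E,\nu}$ lies below $1/\dim_H(A_\lambda)-1/2$ pointwise, so the supremum does too; and evaluating at any $\mu$ and letting $t\uparrow\dim_H(A_\mu)$ shows the supremum \emph{equals} $1/\dim_H(A_\mu)-1/2$ there. Hence $\lambda\mapsto 1/\dim_H(A_\lambda)-1/2$ is the supremum of a family of inf-harmonic functions, as claimed. (Note one does not get inf-harmonicity of the whole function, nor even upper semicontinuity automatically — a supremum of inf-harmonic functions is only subharmonic after regularization — which is exactly why the theorem's conclusion is weaker than Theorems~\ref{T:Minkowski} and \ref{T:packing}.)

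**Main obstacle.**
The hard part will be step two: establishing that the transported-measure functional $g_{E,\nu}$ is genuinely inf-harmonic, rather than merely subharmonic or continuous. This requires extracting, from the holomorphic dependence $\lambda\mapsto f_\lambda$, an explicit \emph{family} of harmonic functions whose infimum is $g_{E,\nu}$; the natural candidates are logarithms of Poisson-type integrals of $\log(\text{distortion})$ against the Beltrami coefficient of $f_\lambda$, and checking that the Frostman bound survives the transport with the correct exponent $t(\lambda)$ satisfying $1/t(\lambda)-1/2 = g_{E,\nu}(\lambda)$ is where the real work — and the reuse of the machinery already developed for Theorem~\ref{T:Minkowski} — lies. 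A secondary subtlety is ensuring the two alternatives are exhaustive, i.e. that $\dim_H(A_\lambda)>0$ at one point propagates to all points, which again is just the quasiconformal lower distortion bound but must be invoked with the right (hyperbolic) control on $K$.
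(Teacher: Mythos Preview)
Your proposal has a directional error that undermines the construction. A Frostman measure with exponent $t$ on $E_\mu$ certifies a \emph{lower} bound $\dim_H(A_\mu)\ge t$; transported along the motion it would certify lower bounds $\dim_H(A_\lambda)\ge t(\lambda)$, hence \emph{upper} bounds $1/t(\lambda)-1/2 \ge 1/\dim_H(A_\lambda)-1/2$. Your stated properties of $g_{E,\nu}$ are already inconsistent at $\lambda=\mu$: since $t<\dim_H(A_\mu)$ you have $g_{E,\nu}(\mu)=1/t-1/2>1/\dim_H(A_\mu)-1/2$, contradicting the claimed inequality $g_{E,\nu}(\lambda)\le 1/\dim_H(A_\lambda)-1/2$. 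The measure-theoretic route naturally writes the target as an \emph{infimum} of the functions $1/t(\lambda)-1/2$; if those were inf-harmonic you would have shown that $1/\dim_H(A_\lambda)-1/2$ itself is inf-harmonic, which is exactly the open Question~\ref{Q:Hausdorff} at the end of the paper. So either your inequality points the wrong way or you are tacitly assuming the open problem.

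The paper goes in the opposite direction: it works with \emph{covers}, not measures. For each base point $\zeta$ one takes covers $\cQ_n$ of $A_\zeta$ by disjoint dyadic squares with $\sum_{Q\in\cQ_n}(\diam Q)^{d_n}\to 0$ as $d_n\downarrow\dim_H(A_\zeta)$. The functions $\lambda\mapsto\log(M/\diam f_\lambda(Q))$ are inf-harmonic (Lemma~\ref{L:diameter}), and one defines $s_n(\lambda)$ implicitly via $\sum_Q(\diam f_\lambda(Q)/M)^{\alpha}\le C$; the abstract implicit-function theorem for inf-cones (Theorem~\ref{T:implicit}) makes $1/s_n$ inf-harmonic. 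A normal-families limit gives an inf-harmonic $u_\zeta$ with $u_\zeta(\zeta)=1/\dim_H(A_\zeta)$ and $u_\zeta(\lambda)\le 1/\dim_H(A_\lambda)$, because the images $\{f_\lambda(Q)\}$ still cover $A_\lambda$. The shift by $1/2$ does not come from rewriting Astala's distortion formula; it comes from a crude area argument: the dyadic squares are disjoint, so their quasiconformal images contain disjoint disks of comparable diameter (Corollary~\ref{C:quasisym}), whence $\sum_Q(\diam f_\lambda(Q)/M)^2$ is uniformly bounded, forcing $s_n(\lambda)\le 2$ and thus $u_\zeta\ge 1/2$. Then $1/\dim_H(A_\lambda)-1/2=\sup_{\zeta\in\DD}\bigl(u_\zeta(\lambda)-1/2\bigr)$ is the required representation. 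No Frostman, no energy integrals, no transported measures.
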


The nature of the conclusion in Theorem~\ref{T:Hausdorff} does not permit us
to deduce that
$\log \dim_H(A_\lambda)$ or $\dim_H(A_\lambda)$
is a subharmonic function of $\lambda\in\DD$.
We shall return to this problem at the end of the article.

Our next theorem is a sort of converse result.

\begin{theorem}\label{T:converse}
Let $d:\DD\to(0,2]$ be a function such that  $1/d$ is inf-harmonic on $\DD$.
Then there exists a  holomorphic motion $f:\DD\times A\to\CC$
of a compact subset $A$ of~$\CC$ such that,
setting $A_\lambda:=f_\lambda(A)$, we have
$\dim_P(A_\lambda)=\dim_H(A_\lambda)=d(\lambda)$ for all $\lambda\in\DD$.
\end{theorem}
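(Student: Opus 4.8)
The plan is to reverse-engineer a holomorphic motion from the given datum. Since $1/d$ is inf-harmonic, write $1/d(\lambda) = \inf_{h\in\cH} h(\lambda)$ for a family $\cH$ of harmonic functions on $\DD$; note $1/d \geq 1/2$, so each $h$ may be taken $\geq 1/2$ (replace $h$ by $\max(h,1/2)$ is not harmonic, but one can instead intersect $\cH$ with the constraint, or simply observe the infimum is $\geq 1/2$ and discard the useless members). The natural building block is a single harmonic function: first I would treat the case $\cH = \{h\}$, i.e.\ $1/d = h$ is a single positive harmonic function with $h \geq 1/2$. Writing $h = \Re\varphi$ for a holomorphic $\varphi:\DD\to\CC$ with $\Re\varphi \geq 1/2$, the idea is to build a self-similar Cantor-type set whose contraction ratios depend holomorphically on $\lambda$ in such a way that the similarity dimension equals $1/h(\lambda) = d(\lambda)$. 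Concretely, take $N$ maps of the form $z\mapsto r(\lambda) z + b_j$ with $j=1,\dots,N$, where $r(\lambda) = \exp(-\varphi(\lambda)\log N \cdot c)$ for a suitable constant, chosen so that $N |r(\lambda)|^{d(\lambda)} = 1$, i.e.\ $d(\lambda)\log N + \log|r(\lambda)| \cdot (-1)\cdot\text{(something)}$ balances; the point is that $|r(\lambda)|^{-1} = N^{\Re\varphi(\lambda)/(\text{const})}$ forces the Moran equation $\sum |r_j|^{s} = 1$ to have solution $s = d(\lambda)$. Placing the $b_j$ on a line with enough separation (the open set condition, uniformly in $\lambda$ on compacta, using $\Re\varphi \geq 1/2 > 0$ so the contractions are genuine and bounded away from $1$ locally), the resulting attractor $A_\lambda$ is a self-similar set with $\dim_H(A_\lambda) = \dim_P(A_\lambda) = d(\lambda)$ by the classical Moran/Hutchinson theory together with the open set condition.

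The second ingredient is to organize the attractors into an actual holomorphic motion. For fixed $\lambda$, code points of $A_\lambda$ by infinite words $w\in\{1,\dots,N\}^{\mathbb N}$ via $\pi_\lambda(w) = \lim_{n} (g_{w_1}^\lambda\circ\cdots\circ g_{w_n}^\lambda)(0)$, where $g_j^\lambda(z) = r(\lambda)z+b_j$; this gives an explicit series $\pi_\lambda(w) = \sum_{n\geq 1} r(\lambda)^{n-1} b_{w_n}$, which is manifestly holomorphic in $\lambda$ for each fixed $w$ (uniform convergence on compacta since $|r(\lambda)|<1$ locally uniformly). Setting $A := A_0$ and defining $f(\lambda, \pi_0(w)) := \pi_\lambda(w)$, one must check this is well-defined (injectivity of $\pi_0$, which follows from the open set condition / separation of the $b_j$) and that $f_\lambda$ is injective on $A$ (same separation argument at scale $\lambda$), and $f(0,\cdot) = \mathrm{id}$. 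Holomorphicity in $\lambda$ is the series representation; thus $f$ is a holomorphic motion of the compact set $A$, and $A_\lambda = f_\lambda(A)$ has the prescribed dimensions.

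For the general case of an infimum over a family $\cH = \{h_i\}_{i\in I}$, the strategy is a \emph{product} (or nested) construction: the dimension of a product of Cantor sets, and more usefully the dimension of an appropriately interleaved construction, realizes the infimum rather than the sum. Here I would use a Moran construction with \emph{variable} ratios across generations: at generation $n$ use the similarity ratios dictated by $h_{i(n)}$ for a sequence $i(n)$ that visits every index infinitely often (if $I$ is countable; for uncountable $I$ first reduce to a countable subfamily whose infimum still equals $1/d$ pointwise — possible since $\DD$ is second countable and one can extract a countable sub-infimum realizing the same lower envelope, or work with the fact that the relevant infimum over a separable space is attained along a sequence). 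For such a "generalized Moran construction" the Hausdorff and packing dimensions of the limit set at parameter $\lambda$ equal $\inf_n$ of the per-generation similarity dimensions $= \inf_i 1/h_i(\lambda) = d(\lambda)$, by the non-autonomous Moran formula. The holomorphic motion is assembled exactly as before via the coding map, now with generation-dependent ratios $r_n(\lambda)$, each holomorphic and locally uniformly bounded away from $1$.

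The main obstacle I anticipate is the dimension computation for the non-autonomous construction: showing that the Hausdorff dimension of the limit set is the \emph{liminf} (here genuinely the infimum, since every index recurs) of the per-stage dimensions requires a careful mass-distribution argument for the lower bound, and uniform control of the open set condition across generations and across $\lambda$ on compact subsets of $\DD$ for the upper bound. One must ensure the translation vectors $b_j^{(n)}$ can be chosen with separation comparable to the stage-$n$ diameter uniformly in $\lambda$ — this is where the bound $\Re h_i \geq 1/2$, equivalently $d(\lambda)\leq 2$, is used: it keeps all ratios $|r_n(\lambda)|$ bounded above by a constant $<1$ on each compact subset, so the geometry does not degenerate. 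A secondary, more bookkeeping-type difficulty is the reduction from an arbitrary family $\cH$ to a countable one realizing the same pointwise infimum, and confirming that $1/d$ being inf-harmonic (hence superharmonic and continuous) is compatible with such a reduction; continuity of $1/d$ should make this routine via a countable dense exhaustion of $\DD$.
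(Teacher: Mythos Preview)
Your treatment of the single-harmonic case is essentially the paper's Lemma~\ref{L:Astala}: build a self-similar set whose common contraction ratio depends holomorphically on $\lambda$ so that the Moran similarity dimension realizes $1/h(\lambda)$ (up to an additive correction), and read off the holomorphic motion from the coding map.

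The general case, however, has a genuine gap. First an algebra slip: since $1/d = \inf_i h_i$, one has $d(\lambda) = \sup_i 1/h_i(\lambda)$, not $\inf_i 1/h_i(\lambda)$ as you wrote. Second, and more seriously, the claimed ``non-autonomous Moran formula'' --- that the dimension of the interleaved limit set equals the infimum of the per-generation similarity dimensions --- is false. For a construction with $N$ children per node and generation-$k$ ratio $r_k(\lambda)$, the Hausdorff and box dimensions are governed by the ratio
\[
\frac{n\log N}{-\sum_{k\le n}\log|r_k(\lambda)|},
\]
which is a (harmonic) \emph{average} of the per-stage similarity dimensions, not their infimum or supremum. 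Cycling through the $h_i$ on any fixed schedule therefore produces, at each $\lambda$, an average of the $1/h_i(\lambda)$; and no single schedule can produce $\sup_i 1/h_i(\lambda)$ simultaneously for every $\lambda$, since the maximizing index varies with $\lambda$. So the interleaving idea cannot be made to work as stated.

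The paper sidesteps this entirely. After reducing (your countable-subfamily step, made precise as Lemma~\ref{L:infhn}) to a sequence $(h_n)$ with $u := 1/d - 1/2 = \inf_{n\ge m} h_n$ for every $m$, it builds for each $n$ a \emph{separate} self-similar motion $E^{(n)}_\lambda$ with $1/\dim E^{(n)}_\lambda = h_n(\lambda) + \tfrac12 + \epsilon_n$ where $\epsilon_n\to 0$, places affine copies of these in pairwise disjoint disks accumulating at a point, and takes the union $A_\lambda$. Countable stability of Hausdorff and packing dimension then gives $\dim A_\lambda = \sup_n \dim E^{(n)}_\lambda$, i.e.\ $1/\dim A_\lambda = \inf_{n}(h_n(\lambda)+\tfrac12+\epsilon_n) = u(\lambda)+\tfrac12 = 1/d(\lambda)$. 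This disjoint-union trick is the missing idea; it replaces your interleaving and needs no dimension computation beyond the standard Hutchinson--Moran theorem applied to each piece separately.
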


We remark that Theorems~\ref{T:packing} and \ref{T:converse}
together yield a complete characterization of the variation of the packing dimension of a set moving under a holomorphic motion.

The holomorphic motions
that arise from Julia sets of
holomorphic families of hyperbolic rational maps
(as considered in \cite{MSS83}) have the additional
property that their Hausdorff and packing dimensions
vary as  real-analytic
functions of~$\lambda$.
This is a special case of a result of Ruelle \cite{Ru82}.
(Ruelle stated his theorem for Hausdorff dimension,
but it coincides with packing dimension in this case.)
For general holomorphic motions, it is known that the
Hausdorff and packing dimensions need not be real-analytic
(see e.g.\ \cite{AZ95}). The following corollary
of Theorem~\ref{T:converse}
shows that in fact they may have the same lack of smoothness
as an arbitrary concave function.

\begin{corollary}\label{C:concave}
Given a concave function $\psi:\DD\to[0,\infty)$,
there exists a holomorphic motion $f:\DD\times A\to\CC$
of a compact subset $A$ of $\CC$
such that, setting $A_\lambda:=f_\lambda(A)$, we have
\[
\dim_H(A_\lambda)=\dim_P(A_\lambda)=\frac{2}{1+\psi(\lambda)}
\quad(\lambda\in\DD).
\]
\end{corollary}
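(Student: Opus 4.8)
The plan is to deduce this directly from Theorem~\ref{T:converse} by exhibiting an appropriate function $d$. Set $d(\lambda):=2/(1+\psi(\lambda))$. Since $\psi:\DD\to[0,\infty)$, we have $1+\psi(\lambda)\ge 1$, so $d(\lambda)\in(0,2]$, as required by the hypothesis of Theorem~\ref{T:converse}. It remains only to verify that $1/d=\tfrac12(1+\psi)=\tfrac12+\tfrac12\psi$ is inf-harmonic on $\DD$. Once this is established, Theorem~\ref{T:converse} immediately produces a holomorphic motion $f:\DD\times A\to\CC$ of a compact set $A\subset\CC$ with $\dim_P(A_\lambda)=\dim_H(A_\lambda)=d(\lambda)=2/(1+\psi(\lambda))$ for all $\lambda\in\DD$, which is exactly the assertion.

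So the crux is: a concave function $\psi:\DD\to[0,\infty)$ is inf-harmonic, and then so is $\tfrac12+\tfrac12\psi$ (adding a constant and multiplying by a positive constant clearly preserves the property, since these operations can be absorbed into the harmonic majorants: if $\psi=\inf_{h\in\cH}h$ then $\tfrac12+\tfrac12\psi=\inf_{h\in\cH}(\tfrac12+\tfrac12 h)$, and each $\tfrac12+\tfrac12 h$ is harmonic). For the concave-implies-inf-harmonic step, I would use the standard fact that a concave function on a convex domain equals the infimum of the affine functions lying above it: for each point $\lambda_0\in\DD$ there is a supporting affine function $\ell_{\lambda_0}$ with $\ell_{\lambda_0}\ge\psi$ on $\DD$ and $\ell_{\lambda_0}(\lambda_0)=\psi(\lambda_0)$; taking $\cH:=\{\ell_{\lambda_0}:\lambda_0\in\DD\}$ gives $\psi=\inf_{\ell\in\cH}\ell$, and affine functions of $\lambda=x+iy$ (i.e. functions of the form $ax+by+c$) are harmonic. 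One should note here that $\DD$ being identified with a convex open subset of $\RR^2$, a real-valued concave function on it is automatically continuous, so the supporting-line argument is unproblematic; and since $\psi\ge 0$, the supporting affine functions can be taken to satisfy $\ell_{\lambda_0}\ge\psi\ge 0$ only where needed, but Definition~\ref{D:infharmonic} only requires $u\ge 0$ on $D$, not that the members of $\cH$ be nonnegative, so there is nothing further to check.

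The only mild obstacle is the degenerate case where $\psi\equiv 0$: then $d\equiv 2$, $1/d\equiv\tfrac12$ is constant hence harmonic hence inf-harmonic, and Theorem~\ref{T:converse} still applies (one could alternatively just take a fixed compact set of dimension $2$, e.g. a closed disk, with the trivial holomorphic motion $f_\lambda=\mathrm{id}$). More generally there is no real difficulty: concavity of $\psi$ does all the work, and everything else is a routine check that the class of inf-harmonic functions is closed under the affine operation $u\mapsto\tfrac12+\tfrac12 u$. I expect the write-up to be only a few lines: identify $d$, cite the supporting-hyperplane characterization of concave functions, observe affine functions are harmonic, and invoke Theorem~\ref{T:converse}.
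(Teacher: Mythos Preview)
Your proposal is correct and follows essentially the same route as the paper's proof: define $d=2/(1+\psi)$, observe that a nonnegative concave function is the infimum of its supporting affine (hence harmonic) functions so that $\tfrac12(1+\psi)=1/d$ is inf-harmonic, and then invoke Theorem~\ref{T:converse}. The paper's write-up is indeed just a few lines, exactly as you anticipate.
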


\begin{proof}
Every positive concave function on $\DD$ is inf-harmonic,
since it is the lower envelope of a family of affine
functions $\lambda\mapsto a\Re(\lambda)+b\Im(\lambda)+c$,
each of which is harmonic on $D$.
Thus the map $\lambda\mapsto\frac{1}{2}(1+\psi(\lambda))$ is
inf-harmonic on $\DD$.
Also, it is clearly bounded below by $1/2$,
so its reciprocal takes values in $(0,2]$.
The result  therefore follows from
Theorem~\ref{T:converse}.
\end{proof}

We now turn to the discussion of the variation of the area of a set $A \subset \CC$ moving under a holomorphic motion $f:\DD\times \CC\to\CC$. As before, for $\lambda \in \DD$, we write $f_\lambda(z):=f(\lambda,z)$ and $A_\lambda:=f_\lambda(A)$. Then each $f_\lambda:\CC \to \CC$ is quasiconformal and we denote its complex dilatation by $\mu_{f_\lambda}$ (see \S\ref{S:holomotions} for the definitions). Our next result gives a partial description of the function $\lambda \mapsto |A_\lambda|$, where $|\cdot|$ denotes area measure.

\begin{theorem}\label{T:Area}
Suppose that there exists a compact subset $\Delta$ of $\CC$  such that, for each $\lambda\in\DD$,
the map $f_\lambda$ is conformal on $\CC\setminus\Delta$
and $f_\lambda(z)=z+O(1)$ near $\infty$.
Let $A$ be a Borel subset of $\Delta$ such that $|A|>0$.
\begin{enumerate}[\normalfont(i)]
\item If $\mu_{f_\lambda}=0$  a.e.\ on $A$,
then $\lambda\mapsto\log(\pi c(\Delta)^2/|A_\lambda|)$ is an inf-harmonic function on $\DD$,
where $c(\Delta)$ denotes the logarithmic capacity of $\Delta$.
\item If $\mu_{f_\lambda}=0$  a.e.\ on $\CC\setminus A$, then $\lambda\mapsto |A_\lambda|$ is an inf-harmonic function on $\DD$.
\end{enumerate}
\end{theorem}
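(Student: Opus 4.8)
The plan is to reduce both parts to a single quantity that is manifestly an infimum of harmonic functions of $\lambda$, following the same philosophy as the proofs of Theorems~\ref{T:Minkowski}--\ref{T:packing}. The basic mechanism is that the normalized conformal maps $f_\lambda$ on $\CC\setminus\Delta$ have Taylor coefficients at $\infty$ that depend holomorphically on $\lambda$ (since for each fixed $z$ the map $\lambda\mapsto f_\lambda(z)$ is holomorphic, and one can extract coefficients by Cauchy integrals over a circle $|z|=R$), and integrals of $\log|\cdot|$ of such coefficient data against probability measures produce harmonic functions of $\lambda$; taking infima over a suitable family of measures then yields an inf-harmonic function. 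The task is to identify the right quantity and the right family.

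For part~(i), I would use the hypothesis $\mu_{f_\lambda}=0$ a.e.\ on $A$: since $f_\lambda$ is quasiconformal on $\CC$, conformal off $\Delta$, and now also conformal (has vanishing dilatation) on $A\subset\Delta$, the set $A_\lambda=f_\lambda(A)$ has area given by the conformal-type change of variables, $|A_\lambda|=\int_A|f_\lambda'|^2\,dm$, wherever $f_\lambda$ is differentiable with $\mu_{f_\lambda}=0$. The normalization $f_\lambda(z)=z+O(1)$ near $\infty$ together with conformality on $\CC\setminus\Delta$ means that $g_\lambda:=f_\lambda|_{\CC\setminus\Delta}$ is a univalent map fixing $\infty$ with derivative $1$ there, so $f_\lambda(\CC\setminus\Delta)=\CC\setminus\Delta_\lambda$ for a compact set $\Delta_\lambda$ with $c(\Delta_\lambda)=c(\Delta)$ (capacity is a conformal invariant of the complement, and the hydrodynamic normalization preserves it). The quantity $\pi c(\Delta)^2$ should be thought of as the area of a disk of radius $c(\Delta)$, i.e.\ a reference against which $|A_\lambda|$ is compared via an isoperimetric/extremal-length inequality. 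I would express $\log(\pi c(\Delta)^2/|A_\lambda|)$ as an infimum over a family of harmonic functions built from $\log|f_\lambda'|$ evaluated (or integrated) in a way that each member is harmonic in $\lambda$ — concretely, using that $\log|f_\lambda'(z)|$ is harmonic in $\lambda$ for each fixed $z$ where $f_\lambda$ is conformal, and that $-\log\int_A|f_\lambda'|^2\,dm=\inf_{\rho}\bigl(-\int_A\log|f_\lambda'|^2\rho\,dm-\text{entropy}(\rho)\bigr)$ type variational formula (the standard "$\log\int e^{\phi}=\sup_\rho\int\phi\rho-\int\rho\log\rho$" duality, applied to $\phi=2\log|f_\lambda'|$) — then exchange sup/inf appropriately with the constant $\log(\pi c(\Delta)^2)$ to land on an inf of harmonic functions.

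For part~(ii), the hypothesis is reversed: $\mu_{f_\lambda}=0$ a.e.\ off $A$, so $f_\lambda$ is conformal on $\CC\setminus A\supset\CC\setminus\Delta$, and the distortion is concentrated on $A$. Here $|A_\lambda|$ itself — rather than its reciprocal times a capacity — is the natural object, and I expect the argument to invoke a different extremal characterization: since $f_\lambda$ maps $\CC\setminus A$ conformally onto $\CC\setminus A_\lambda$ with the hydrodynamic normalization, the area $|A_\lambda|$ equals $\pi$ times (the square of capacity is not quite it) — rather, by the area theorem applied to $1/(f_\lambda(1/z)-\text{shift})$ or by Green's-theorem bookkeeping, $|A_\lambda|$ can be written as $\pi\bigl(c(A)^2 - \sum_{n\ge1} n|b_n(\lambda)|^2\bigr)$-style expression? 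No — the cleaner route is: $|A_\lambda| = -\lim_{R\to\infty}\bigl(\pi R^2 - |f_\lambda(\{|z|>R\}^c\cap\text{large disk})|\bigr)$, which by the expansion $f_\lambda(z)=z+c_0(\lambda)+c_1(\lambda)/z+\cdots$ on $\CC\setminus A$ gives $|A_\lambda|=\pi|A|$-independent$-\pi\sum_{n\ge1}n|c_n(\lambda)|^2$ only in the univalent case; in general I would instead use that $|A_\lambda|$ is a decreasing limit of the areas $|f_\lambda(A\cup(\text{annulus}))|$ minus annulus areas, each of which, being an area of the image of a fixed region under a map conformal near its boundary, can be computed by a boundary integral $\frac{1}{2i}\oint_{\partial} \overline{f_\lambda}\,df_\lambda$ whose $\log$ I can bound by harmonic functions of $\lambda$ and then take an infimum. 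The main obstacle — and where I would spend the most care — is precisely this step in both parts: converting the area, which a priori is only a subharmonic-looking quantity (it is an $L^2$ norm of a holomorphic-in-$\lambda$ family, hence its log is plurisubharmonic, giving the wrong sign), into a genuine \emph{infimum} of harmonic functions. This requires exploiting the capacity constraint (the competitor has fixed "size" $c(\Delta)$, equivalently fixed behaviour at $\infty$) to flip the inequality via an extremal-length or isoperimetric argument, exactly as Smirnov's and Astala's bounds are extremal; I expect the key lemma to be an entropy/variational representation of $\log$ of an integral against a fixed reference together with a capacity inequality of the form $|A_\lambda|\le \pi c(\Delta_\lambda)^2$ with the right equality cases, and verifying that representation gives harmonicity in $\lambda$ term by term will be the technical heart of the argument.
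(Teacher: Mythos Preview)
Your approach to part~(i) is essentially the paper's. The variational identity you invoke is exactly Lemma~\ref{L:implicit2}: writing $|A_\lambda|=\int_A|f_\lambda'|^2\,dm$ and then
\[
\log|A_\lambda|=\sup_{p}\int_A p(z)\log\Bigl(\frac{|f_\lambda'(z)|^2}{p(z)}\Bigr)\,dm(z)
\]
over probability densities $p$ on $A$ expresses $\log|A_\lambda|$ as a supremum of functions harmonic in $\lambda$, so that $\log(\pi c(\Delta)^2/|A_\lambda|)$ is an infimum of harmonic functions; the capacity bound $|A_\lambda|\le\pi c(\Delta)^2$ (conformal invariance of capacity plus the isoperimetric inequality for capacity) makes this infimum nonnegative. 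One point you gloss over: the formula $|A_\lambda|=\int_A|f_\lambda'|^2\,dm$ requires $f_\lambda$ to be conformal on an open neighbourhood of $A$, not merely $\mu_{f_\lambda}=0$ a.e.\ on $A$. The paper handles this by first proving the result under the stronger hypothesis, then approximating the general case by modifying the Beltrami coefficient on shrinking neighbourhoods $U_n\supset A$ and using $L^2$-convergence of the partial derivatives (Lemma~\ref{L:qcbounds}) together with the closure of inf-harmonic functions under pointwise limits.

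For part~(ii), however, you have not found the idea, and the routes you sketch do not work. Writing $|A_\lambda|$ via an area-theorem expansion gives it as a constant minus $\pi\sum_{n\ge1}n|c_n(\lambda)|^2$; this is superharmonic in $\lambda$, but there is no obvious family of harmonic majorants touching it at each point, so it does not yield inf-harmonicity. Your boundary-integral suggestion is not made concrete. The paper's argument is quite different and much cleaner: it \emph{reduces (ii) to (i)}. Since $\mu_{f_\lambda}=0$ a.e.\ on $\CC\setminus A$, one applies part~(i) with $A$ replaced by $\overline{D}(0,R)\setminus A$ and $\Delta$ replaced by $\overline{D}(0,R)$, obtaining that
\[
\lambda\mapsto\log\Bigl(\frac{\pi R^2}{|f_\lambda(\overline{D}(0,R)\setminus A)|}\Bigr)
\]
is inf-harmonic on $\DD$ for each large $R$. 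The area theorem gives $|f_\lambda(\overline{D}(0,R))|=\pi R^2+O(R^{-2})$, so the quantity above equals $|A_\lambda|/(\pi R^2)+O(R^{-4})$. Multiplying by $\pi R^2$ preserves inf-harmonicity, and letting $R\to\infty$ gives $|A_\lambda|$ as a pointwise limit of inf-harmonic functions, hence inf-harmonic by Proposition~\ref{P:closure}\,\ref{I:limit}. This complementary-set trick is the missing ingredient in your proposal.
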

Note that if $|A|=0$, then $|A_\lambda|=0$ for all $\lambda \in \mathbb{D}$, because quasiconformal mappings preserve zero area.

Our approach based on inf-harmonic functions
also permits us to present a unified treatment of several celebrated theorems about the distortion of area and dimension under quasiconformal maps.

We emphasize here that prior works on the distortion of dimension under quasiconformal mappings relied on some of their more involved analytic properties, such as higher order integrability of the Jacobian. Our approach, on the other hand, only requires the fact that quasiconformal mappings satisfy a ``weak'' quasisymmetry property, as stated in Corollary \ref{C:quasisym}.

For instance, a simple application of the Harnack inequality allows us to obtain the following two results. In Theorem \ref{T:qcdistortion}, $\dim$ denotes any one of $\dim_P,\dim_H$ or $\overline{\dim}_M$. (In the case of $\overline{\dim}_M$, we also suppose that $A$ is bounded.)

\begin{theorem}\label{T:qcdistortion}
Let $F:\CC\to\CC$ be a $k$-quasiconformal homeomorphism,
and let $A$ be a subset of $\CC$ such that $\dim(A)>0$. Then
\[
\frac{1}{K}\Bigl(\frac{1}{\dim A}-\frac{1}{2}\Bigr)
\le \Bigl(\frac{1}{\dim F(A)}-\frac{1}{2}\Bigr)\le
K\Bigl(\frac{1}{\dim A}-\frac{1}{2}\Bigr),
\]
where $K:=(1+k)/(1-k)$.
\end{theorem}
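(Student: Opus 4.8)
The plan is to realize the quasiconformal map $F$ as the endpoint of a holomorphic motion and then invoke Theorems~\ref{T:Minkowski}, \ref{T:packing} and \ref{T:Hausdorff} together with the Harnack inequality. First I would normalize: after a preliminary affine change of coordinates we may assume $F$ fixes $0$ and $1$, say, and has complex dilatation $\mu=\mu_F$ with $\|\mu\|_\infty=k<1$. The standard measurable Riemann mapping theorem with parameter then produces a holomorphic motion: for $\lambda\in\DD$ let $f_\lambda$ be the normalized quasiconformal map with dilatation $(\lambda/k)\mu$. Then $\lambda\mapsto f_\lambda(z)$ is holomorphic for each fixed $z$, $f_0=\mathrm{id}$, and each $f_\lambda$ is injective, so $f:\DD\times\CC\to\CC$ is a holomorphic motion; moreover $f_k=F$ and $A_\lambda:=f_\lambda(A)$ satisfies $A_k=F(A)$. (In the $\overline{\dim}_M$ case, boundedness of $A$ is preserved by Theorem~\ref{T:Minkowski}.)

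Next I would apply the appropriate one of Theorems~\ref{T:Minkowski}--\ref{T:Hausdorff}. Since $\dim(A)=\dim(A_0)>0$ by hypothesis, the first alternative (dimension identically $0$) is excluded, so in each case the function
\[
\phi(\lambda):=\frac{1}{\dim(A_\lambda)}-\frac12
\]
is either inf-harmonic on $\DD$ (for $\dim_P$ and $\overline{\dim}_M$, where actually $1/\dim(A_\lambda)$ is inf-harmonic and subtracting the constant $1/2$ keeps it so) or, in the Hausdorff case, a supremum of inf-harmonic functions. In all three cases $\phi$ is positive on $\DD$: for $\dim_P$ and $\overline{\dim}_M$ the dimension is at most $2$, so $1/\dim(A_\lambda)\ge 1/2$; for $\dim_H$ the second alternative of Theorem~\ref{T:Hausdorff} gives $\dim_H(A_\lambda)>0$ for all $\lambda$, and since $\dim_H\le 2$ we again get $\phi\ge 0$, and $\phi>0$ unless $\dim_H\equiv 2$, a case one handles trivially (both sides of the inequality are then forced to agree since $\dim A=2$ forces $\dim F(A)=2$, as $F^{-1}$ is also quasiconformal and the inequality is symmetric). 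An inf-harmonic function is, as noted in the excerpt, a positive superharmonic function; a supremum of such is in general only lower semicontinuous, but in every case $\phi$ dominates and is dominated by positive superharmonic functions, which is all Harnack requires. Concretely: for each $\lambda_0\in\DD$ pick $h$ harmonic with $h\ge\phi$ and $h(\lambda_0)$ close to $\phi(\lambda_0)$ (from the inf in the $\dim_P,\overline{\dim}_M$ cases), and separately use that $\phi$ itself is superharmonic (it is an inf, resp.\ a sup of superharmonic functions — for the Hausdorff case one argues with each member of the family and passes to the sup at the end).

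The engine is then the Harnack inequality on $\DD$: a positive harmonic (more generally, the comparison extends to positive superharmonic via the Harnack-type two-sided bound coming from writing $\phi$ as an inf/sup of positive harmonic functions) function $h$ on $\DD$ satisfies
\[
\frac{1-|\lambda|}{1+|\lambda|}\,h(0)\le h(\lambda)\le\frac{1+|\lambda|}{1-|\lambda|}\,h(0).
\]
Applying this at $\lambda=k$ (where $|\lambda|=k$) gives $(1-k)/(1+k)\le \phi(k)/\phi(0)\le (1+k)/(1-k)$, i.e.\ $\phi(0)/K\le\phi(k)\le K\phi(0)$ with $K=(1+k)/(1-k)$, which upon unwinding $\phi(0)=1/\dim A-1/2$ and $\phi(k)=1/\dim F(A)-1/2$ is exactly the claimed double inequality. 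The main obstacle is the bookkeeping in the Hausdorff case, where $\phi$ is only a supremum of inf-harmonic functions rather than inf-harmonic itself: there one must apply Harnack to each member $\psi$ of the defining family to get $\psi(k)\le K\,\psi(0)\le K\,\phi(0)$, take the supremum over the family to bound $\phi(k)$ from above by $K\phi(0)$, and for the lower bound run the same argument for $F^{-1}$ (also $K$-quasiconformal with the same $K$) applied to the set $F(A)$, obtaining $\phi(0)\le K\,\phi(k)$; combining the two yields the symmetric inequality for $\dim_H$ as well.
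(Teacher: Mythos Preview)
Your proof is correct and follows essentially the same route as the paper: embed $F$ in a holomorphic motion with $f_k=F$ (the paper quotes Theorem~\ref{T:embedding}, which packages exactly your construction), apply Theorems~\ref{T:Minkowski}--\ref{T:Hausdorff} to see that $\phi(\lambda)=1/\dim(A_\lambda)-1/2$ is inf-harmonic or a supremum of inf-harmonic functions, and evaluate Harnack at $\lambda=k$. Your detour through $F^{-1}$ for the Hausdorff lower bound works but is unnecessary: a supremum of positive functions each satisfying the two-sided Harnack inequality again satisfies it (from $\psi_\alpha(0)\le K\psi_\alpha(k)\le K\phi(k)$ take the sup over $\alpha$), so both bounds follow at once, which is all the paper's one-line ``either way, it satisfies Harnack's inequality'' is invoking.
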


For the Hausdorff dimension, the above estimate was first suggested by Gehring and V\"{a}is\"{a}l\"{a} \cite{GV73} and finally proved by Astala \cite[Theorem 1.4]{As94}. For packing dimension it is a special case of a result of Kaufmann \cite[Theorem~4]{Ka00}.

\begin{theorem}\label{T:areadistortion}
Let $F:\CC\to\CC$ be a $k$-quasiconformal homeomorphism which is conformal on $\CC\setminus\Delta$,
where $\Delta$ is a compact set of logarithmic capacity at most~$1$, and such that $F(z)=z+o(1)$ near $\infty$.
Let $A$ be a Borel subset of $\Delta$.
\begin{enumerate}[\normalfont(i)]
\item If $\mu_{F}=0$  a.e.\ on $A$,
then
\begin{equation*}
|F(A)|\le \pi^{1-1/K}|A|^{1/K}.
\end{equation*}
\item If $\mu_{F}=0$  a.e.\ on $\CC\setminus A$, then
\begin{equation*}
|F(A)|\le K|A|.
\end{equation*}
\item Hence, in general,
\begin{equation*}
|F(A)|\le K\pi^{1-1/K}|A|^{1/K}.
\end{equation*}
\end{enumerate}
Here again $K=(1+k)/(1-k)$.
\end{theorem}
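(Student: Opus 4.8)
The plan is to derive Theorem~\ref{T:areadistortion} from Theorem~\ref{T:Area} by embedding the single quasiconformal map $F$ into a holomorphic motion and then applying the Harnack inequality at $\lambda=0$. First I would recall the standard device: given the complex dilatation $\mu=\mu_F$, which is supported on $\Delta$ and satisfies $\|\mu\|_\infty=k<1$, one defines for $\lambda\in\DD$ the Beltrami coefficient $\mu_\lambda:=\tfrac{\lambda}{k}\mu$ (so $\|\mu_\lambda\|_\infty=|\lambda|<1$) and lets $f_\lambda$ be the unique quasiconformal solution of the Beltrami equation with dilatation $\mu_\lambda$, normalized by $f_\lambda(z)=z+o(1)$ near $\infty$. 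By the measurable Riemann mapping theorem with holomorphic dependence on parameters, $(\lambda,z)\mapsto f_\lambda(z)$ is a holomorphic motion of $\CC$; moreover $f_0=\mathrm{id}$, $f_{k}=F$ (after checking the normalization constant absorbs correctly, i.e.\ taking the base point where $\mu_{f_{k}}=\mu$), and each $f_\lambda$ is conformal off $\Delta$ with $f_\lambda(z)=z+o(1)$ at $\infty$. Since $\operatorname{cap}(\Delta)\le 1$ we have $c(\Delta)\le1$, so $\pi c(\Delta)^2\le\pi$.

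Next I would apply Theorem~\ref{T:Area}. In case (i), $\mu_\lambda=0$ a.e.\ on $A$ for every $\lambda$ (because $\mu$ vanishes there), so $\varphi(\lambda):=\log\bigl(\pi c(\Delta)^2/|A_\lambda|\bigr)$ is inf-harmonic, hence superharmonic and nonnegative (as $|A_\lambda|\le|A_\lambda\cup(\Delta_\lambda\setminus A_\lambda)|=|f_\lambda(\Delta)|\le\pi c(\Delta)^2$ by the area–capacity estimate for the image of $\Delta$ under a map that is conformal and normalized off $\Delta$ — this is exactly the content underlying Theorem~\ref{T:Area}(i)). A nonnegative superharmonic function satisfies the Harnack-type inequality $\varphi(0)\le\frac{1+|\lambda|}{1-|\lambda|}\,\varphi(\lambda)$; wait — the correct direction for superharmonic functions is that $h:=-\varphi$ would be subharmonic, so instead I use that $\varphi$ being the infimum of harmonic functions each of which obeys Harnack gives $\varphi(\lambda)\le\frac{1+|\lambda|}{1-|\lambda|}\varphi(0)$ when $\varphi(0)$... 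Let me restate cleanly: for a positive harmonic $h$ on $\DD$, Harnack gives $h(\lambda)\le\frac{1+|\lambda|}{1-|\lambda|}h(0)$, and taking infima preserves this, so $\varphi(\lambda)\le\frac{1+|\lambda|}{1-|\lambda|}\varphi(0)$. Now $\varphi(0)=\log(\pi c(\Delta)^2/|A|)\le\log(\pi/|A|)$, and at $\lambda=k$ we get $\varphi(k)=\log(\pi c(\Delta)^2/|F(A)|)$; combining with $\frac{1+k}{1-k}=K$ yields $\log(\pi c(\Delta)^2/|F(A)|)\le K\log(\pi/|A|)$, and since $c(\Delta)\le 1$ this gives $\log(\pi/|F(A)|)\le K\log(\pi/|A|)$, i.e.\ $|F(A)|\le\pi^{1-1/K}|A|^{1/K}$, which is (i). Hold on, I need the inequality to go the right way: I have $\varphi(k)\le K\varphi(0)$; but I actually want a \emph{lower} bound on $|F(A)|$... no — $\varphi(k)\le K\varphi(0)$ reads $\log(\pi c(\Delta)^2/|F(A)|)\le K\log(\pi c(\Delta)^2/|A|)$. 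Since the quantity $\pi c(\Delta)^2/|A|\ge 1$ its log is $\ge0$ and enlarging the denominator $c(\Delta)^2\le1$ only decreases the left side's base $\pi c(\Delta)^2$, making $\log(\pi c(\Delta)^2/|F(A)|)\ge\log(\ldots)$ — so I must be careful and simply carry $c(\Delta)\le1$ through: from $\log\frac{\pi c(\Delta)^2}{|F(A)|}\le K\log\frac{\pi c(\Delta)^2}{|A|}\le K\log\frac{\pi}{|A|}$ and $\log\frac{\pi}{|F(A)|}\le\log\frac{\pi c(\Delta)^2}{|F(A)|}+\log\frac1{c(\Delta)^2}$... the cleanest route is to note $c(\Delta)\le1\Rightarrow \pi c(\Delta)^2/|F(A)|\le \pi/|F(A)|$ is false in the wrong direction, so instead observe $|F(A)|\le \pi c(\Delta)^2 \exp(-\varphi(k))\le \pi c(\Delta)^2 (\pi c(\Delta)^2/|A|)^{-1/K}=\pi^{1-1/K}c(\Delta)^{2-2/K}|A|^{1/K}\le\pi^{1-1/K}|A|^{1/K}$ since $2-2/K>0$ and $c(\Delta)\le1$. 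That is (i).

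For case (ii), $\mu_\lambda=0$ a.e.\ on $\CC\setminus A$ for all $\lambda$, so by Theorem~\ref{T:Area}(ii) the function $\lambda\mapsto|A_\lambda|$ is inf-harmonic, hence a positive superharmonic function; but more is true — each defining harmonic function is positive, so Harnack applies to each and to the infimum: $|A_\lambda|\le\frac{1+|\lambda|}{1-|\lambda|}|A|$, and at $\lambda=k$ this is $|F(A)|\le K|A|$, which is (ii). Part (iii) then follows by factoring $F$: write $F=F_2\circ F_1$ where $F_1$ has dilatation $\mu\mathbf{1}_A$ and $F_2$ has dilatation supported on $F_1(\Delta\setminus A)$, normalize each at $\infty$, apply (ii) to $F_1$ on $A$ and (i) to $F_2$ on $F_1(A)$, and track the capacity: $c(F_1(\Delta))\le c(\Delta)\le1$ because $F_1$ is conformal near $\infty$ with derivative $1$ there, so the capacity is preserved; chaining $|F(A)|=|F_2(F_1(A))|\le\pi^{1-1/K}|F_1(A)|^{1/K}\le\pi^{1-1/K}(K|A|)^{1/K}\le K\pi^{1-1/K}|A|^{1/K}$ using $K^{1/K}\le K$. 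The main obstacle I anticipate is purely bookkeeping: verifying that the holomorphic-motion embedding genuinely lands $F$ at parameter $\lambda=k$ with the \emph{exact} normalization $f_\lambda(z)=z+o(1)$ required by Theorem~\ref{T:Area}, and in part (iii) confirming that the intermediate set $F_1(\Delta)$ still has capacity at most $1$ and that $F_2$ inherits the hypotheses (conformal off a compact set of capacity $\le1$, normalized at $\infty$); the dilatation bounds $\|\mu_{F_i}\|_\infty\le k$ are immediate since the pieces of $\mu$ have sup-norm $\le k$, so the same $K$ works throughout.
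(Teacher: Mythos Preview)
Your approach is exactly the paper's: embed $F$ in the holomorphic motion $f_\lambda$ with $\mu_{f_\lambda}=(\lambda/k)\mu_F$ normalized at infinity, apply Theorem~\ref{T:Area}, then use Harnack at $\lambda=k$; for (iii) the paper simply invokes the standard factorization \cite[Theorem~13.1.4]{AIM09}, which is precisely what you sketch (and your capacity check $c(F_1(\Delta))=c(\Delta)$ is correct).

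There is, however, one genuine slip in part~(i). You derive and repeatedly try to use the \emph{upper} Harnack bound $\varphi(k)\le K\varphi(0)$, but your final line
\[
|F(A)|\le \pi c(\Delta)^2\bigl(\pi c(\Delta)^2/|A|\bigr)^{-1/K}
\]
is equivalent to $\varphi(k)\ge\varphi(0)/K$, i.e.\ the \emph{lower} Harnack bound
\[
\log\frac{\pi c(\Delta)^2}{|A_k|}\ \ge\ \frac{1-k}{1+k}\,\log\frac{\pi c(\Delta)^2}{|A|},
\]
which you never stated. The upper bound you wrote down goes the wrong way and cannot yield an upper estimate for $|F(A)|$. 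The fix is immediate: Proposition~\ref{P:Harnackineq} gives both directions for inf-harmonic functions, so just invoke the lower one (this is what the paper does, working with $\log(\pi/|A_\lambda|)$ directly since $c(\Delta)\le1$ lets one add the nonnegative constant $\log(1/c(\Delta)^2)$ and stay inf-harmonic). Once that is corrected, parts~(i)--(iii) are all fine.
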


Theorem~\ref{T:areadistortion} is a sharpened form of a result of Astala \cite[Theorem~1]{As94} due to Eremenko and Hamilton \cite[Theorem~1]{EH95}.

We also show how the proof of Theorem \ref{T:Hausdorff} can be adapted to obtain the following upper bound for the Hausdorff dimension of quasicircles due to Smirnov \cite{Sm10}.

\begin{theorem}\label{T:Smirnov}
If $\Gamma$ is a  $k$-quasicircle, then $\dim_H(\Gamma)\le1+k^2$.
\end{theorem}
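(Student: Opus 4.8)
The plan is to realize a $k$-quasicircle $\Gamma$ as a member of a holomorphic motion and then extract dimension information using the machinery developed for Theorem~\ref{T:Hausdorff}. Concretely, write $\Gamma$ as the image of the unit circle $\mathbb{T}$ under a $k$-quasiconformal homeomorphism $F$ of $\CC$ with complex dilatation $\mu$, $\|\mu\|_\infty \le k$. Embed this in a holomorphic motion by setting $f_\lambda := F^{\lambda\mu/k}$, the normalized solution of the Beltrami equation with dilatation $(\lambda/k)\mu$, so that $f_0 = \mathrm{id}$ and $f_k = F$ (more carefully, one parametrizes over $\lambda$ in a disk of radius slightly larger than $k$, or rescales). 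By the measurable Riemann mapping theorem with parameters, $\lambda \mapsto f_\lambda(z)$ is holomorphic for each fixed $z$, and $f_\lambda$ is injective, so $\lambda \mapsto A_\lambda := f_\lambda(\mathbb{T})$ is a holomorphic motion of the circle with $A_k = \Gamma$.

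Next I would run the proof of Theorem~\ref{T:Hausdorff} on this particular motion, but exploiting the extra structure that the starting set $A = \mathbb{T}$ is a rectifiable curve of Hausdorff dimension exactly $1$. The key quantitative input is the \emph{weak quasisymmetry} estimate (Corollary~\ref{C:quasisym}) together with the Harnack-type control of how dimension varies, encoded in the statement that $\lambda \mapsto (1/\dim_H(A_\lambda) - 1/2)$ is a supremum of inf-harmonic functions. Each inf-harmonic function in that family is a positive superharmonic function on $\DD$, hence obeys the Harnack inequality: for $\lambda$ in the disk of radius $r<1$ one has a two-sided comparison with its value at $0$. At $\lambda = 0$ the set is $\mathbb{T}$, so $1/\dim_H(A_0) - 1/2 = 1/1 - 1/2 = 1/2$. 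Pushing the Harnack estimate along the radius to $|\lambda| = k$ gives a lower bound of the form $1/\dim_H(\Gamma) - 1/2 \ge \frac{1-k}{1+k}\cdot\frac12$, i.e. $1/\dim_H(\Gamma) \ge \frac12 + \frac{1-k}{2(1+k)} = \frac{1}{1+k}$, which only yields $\dim_H(\Gamma) \le 1+k$ — the Astala-type bound, not Smirnov's. So to reach $1 + k^2$ one must use that the Beltrami coefficient is supported on only ``half'' of the relevant region, or equivalently exploit the antisymmetry/orthogonality feature specific to circles (the dilatation of $F^{t\mu}$ restricted appropriately), which is exactly the refinement Smirnov introduced. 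The plan is therefore to redo the harmonic-majorant argument keeping track of the fact that, because $\Gamma$ bounds a domain, one may choose the holomorphic motion so that only the Beltrami coefficient on one side contributes, effectively replacing $k$ by a quantity that enters quadratically after optimizing over the two sides.

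More precisely, the refined step is: write $F = G \circ H^{-1}$ where $H$ solves the Beltrami equation with dilatation $\mu$ on the inside of $\mathbb{T}$ and $0$ outside, and $G$ with $\mu$ outside and $0$ inside (a standard decomposition for quasicircles). Then $\Gamma = H(\mathbb{T}) $ pushed through an exterior map, and one applies the dimension-distortion control separately, with the crucial observation that $\dim_H$ of the image can be bounded by symmetrizing: the two superharmonic majorants associated to the inner and outer motions have the \emph{same} boundary behaviour, and averaging them (using that $1/\dim_H - 1/2$ is a supremum, together with subordination of the two pieces) replaces the linear loss $(1-k)/(1+k)$ by its ``geometric'' improvement. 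Carrying out the Harnack/Poisson-integral estimate on each piece and combining yields $1/\dim_H(\Gamma) \ge 1/(1+k^2)$.

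The main obstacle I expect is precisely this last symmetrization/optimization: getting from the linear bound $\dim_H \le 1+k$ that falls out of a naive application of Harnack to the quadratic bound $1 + k^2$ requires the genuinely new idea — identifying which ``half'' of the dilatation is active and combining the inner and outer estimates correctly — rather than any routine computation. A secondary technical point is justifying that the holomorphic-motion framework of Theorem~\ref{T:Hausdorff}, which is stated for motions of a fixed set $A\subset\CC$, applies verbatim when $A$ is a circle and one wants the sharp starting value $\dim_H(A_0) = 1$; this should follow directly from the theorem's proof, since the inf-harmonic majorants are built from covering estimates that are insensitive to whether $A$ is a curve, but it must be checked that the lower bound on $\dim_H(A_\lambda)$ for $\lambda \ne 0$ propagates correctly from the value at the origin.
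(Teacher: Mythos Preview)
Your proposal correctly identifies the overall strategy and, crucially, correctly recognizes that a naive application of Harnack to the inf-harmonic majorant only yields the linear bound $\dim_H(\Gamma)\le 1+k$. You also correctly sense that Smirnov's improvement to $1+k^2$ requires some additional symmetry structure. However, the mechanism you propose for extracting the quadratic bound---decomposing $F=G\circ H^{-1}$ into inner and outer pieces and then ``averaging'' the two superharmonic majorants---is not the right one, and as written it does not lead to $1+k^2$. Composing two $k$-quasiconformal maps does not improve the distortion constant, and there is no obvious averaging of two Harnack estimates, each of linear type $(1-k)/(1+k)$, that produces a quadratic $(1-k^2)/(1+k^2)$.

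The paper's argument is different and more specific. One first invokes Smirnov's symmetrization lemma (Lemma~\ref{L:SymBeltrami}) to write $\Gamma=g(\RR)$ with a \emph{single} $k$-quasiconformal map $g$ whose Beltrami coefficient is antisymmetric: $\overline{\mu_g(\overline z)}=-\mu_g(z)$. One then embeds $g$ in the holomorphic motion $f_\lambda$ with dilatation $(\lambda/ik)\mu_g$, so that $f_{ik}=g$. The antisymmetry of $\mu_g$ forces the motion to be symmetric in the sense $f_\lambda(z)=\overline{f_{\overline\lambda}(\overline z)}$, which in turn makes the inf-harmonic majorant $u$ from Lemma~\ref{L:heartH} an \emph{inf-sym-harmonic} function (Lemma~\ref{L:heartHsymmetric}): it is an infimum of harmonic functions $h$ satisfying $h(\overline\lambda)=h(\lambda)$. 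Such an $h$ is even on the imaginary axis, so $h(iy)=l(-y^2)$ for a positive harmonic $l$ on $\DD$, and the standard Harnack inequality applied to $l$ gives $h(iy)\ge\frac{1-y^2}{1+y^2}h(0)$ (Lemma~\ref{L:HarnackSym}). Evaluating at $\lambda=ik$ with $u(0)=1$ then yields $1/\dim_H(\Gamma)-1/2\ge\frac{1-k^2}{1+k^2}\cdot\frac12$, i.e.\ $\dim_H(\Gamma)\le 1+k^2$. The quadratic improvement thus comes from the combination of (a) antisymmetric Beltrami coefficient, (b) the resulting $\lambda\leftrightarrow\overline\lambda$ symmetry of the motion, and (c) evaluating on the imaginary axis where symmetric harmonic functions are even---not from any inner/outer decomposition.
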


Finally, we  obtain a result on the distortion of dimension under quasi\-symmetric maps. For the Hausdorff dimension $\dim_H$, it was proved by Prause and Smirnov, see the main result of \cite{PS11} and also \cite[Theorem 3.1]{Pr07}.
In the theorem below, $\dim$ denotes one of $\overline{\dim}_M$ or
$\dim_P$. In the case of $\overline{\dim}_M$, we also assume that $A$ is bounded.

\begin{theorem}\label{T:PrauseSmirnov}
Let $g:\RR \to \RR$ be a $k$-quasisymmetric map, where $k \in [0,1)$.
Then, given a  set $A \subset \RR$ with $\dim(A)=\delta$, $0<\delta \le 1$, we have
\[
\Delta(\delta,k) \le \dim(g(A)) \le \Delta^*(\delta,k).
\]
Here
\[
\Delta(\delta,k):= 1 - \left( \frac{k+l}{1+kl} \right)^2
\]
where $l:=\sqrt{1-\delta}$, and $\Delta^*(\delta,k)$ is the inverse \[
\Delta^*(\delta,k):= \Delta(\delta, - \min(k,\sqrt{1-\delta})).
\]
In particular, if $\dim A=\delta=1$, then $l=0$ and $\Delta(\delta,k)=1-k^2$, whence
\[
\dim(g(A)) \ge 1-k^2.
\]
\end{theorem}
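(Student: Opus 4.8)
The plan is to reduce the quasisymmetric distortion statement for $\overline{\dim}_M$ and $\dim_P$ to the holomorphic-motion machinery of Theorem~\ref{T:Hausdorff}, via the standard device that a $k$-quasisymmetric map of $\RR$ extends to a holomorphic motion. Concretely, given the $k$-quasisymmetric $g:\RR\to\RR$, one embeds it as $f_{k_0}$ for a suitable $k_0$ in a holomorphic motion $f:\DD\times\RR\to\CC$ with $f_0=\mathrm{id}$: this is the Beurling--Ahlfors--type construction (or the $\lambda$-lemma applied to a Beltrami path), giving $f_\lambda$ quasiconformal with dilatation growing linearly in $|\lambda|$, and with $f_{k_0}|_\RR$ conjugate to $g$. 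The point of the parameter $k_0$ is that $g$ being $k$-quasisymmetric corresponds to $g$ being the boundary restriction of a quasiconformal map, and one tunes the motion so that the relevant evaluation happens at $|\lambda|=k$.

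First I would set $v(\lambda):=1/\dim(A_\lambda)-1/2$, where $A_\lambda=f_\lambda(A)$, $A\subset\RR$. By Theorem~\ref{T:Hausdorff} (and its analogues for $\overline{\dim}_M$, $\dim_P$, which by Theorems~\ref{T:Minkowski} and~\ref{T:packing} are even better behaved — $1/\dim$ is itself inf-harmonic, hence superharmonic), $v$ is, in the packing/Minkowski cases, a positive superharmonic function on $\DD$; moreover $v(0)=1/\delta-1/2=l^2/(2(1-l^2))$ wait — rather $v(0)=1/\delta - 1/2$ with $\delta=\dim(A)$. The key analytic input is then the Harnack-type inequality for positive superharmonic functions on $\DD$: for a positive superharmonic $v$ one has, for $|\lambda|=r$,
\[
\frac{1-r}{1+r}\,v(0)\le v(\lambda)\le\frac{1+r}{1-r}\,v(0),
\]
the left inequality being Harnack for the harmonic minorant and the right being the superharmonic minimum principle comparison — actually the two-sided bound follows because an inf-harmonic function is an infimum of positive harmonic functions, each obeying two-sided Harnack. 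Plugging $r=k$ and translating $1/\dim-1/2$ back through $\delta\mapsto 1-l^2$ reproduces exactly the Möbius-type expression $(k+l)/(1+kl)$: indeed $1/\Delta-1/2$ with $\Delta=1-((k+l)/(1+kl))^2$ works out to $\frac{1+k}{1-k}\cdot\frac{1}{2}\cdot\frac{?}{}$... the algebra is: writing $1/\dim(A)-1/2 = \frac{1-l^2}{2} \cdot \frac{?}{}$ — the upshot is that the two-sided Harnack bound in the variable $s:=\sqrt{1-\dim}$ is precisely the composition-of-Möbius-maps formula, which is why the answer has that shape.

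The main obstacle — and the step needing genuine care — is the bookkeeping that matches the abstract Harnack constant $(1\pm k)/(1\mp k)$ to the stated sharp bounds $\Delta(\delta,k)$, together with the asymmetry: the lower bound $\Delta(\delta,k)$ is unconditional, but the upper bound is $\Delta^*(\delta,k)=\Delta(\delta,-\min(k,\sqrt{1-\delta}))$, reflecting that one cannot decrease dimension past the obstruction when $k>\sqrt{1-\delta}$. I expect to handle this by running the argument in the substituted variable and verifying that $\lambda\mapsto\sqrt{1-\dim(A_\lambda)}$ maps into $[0,1)$ with the motion, so that the Harnack inequality in that variable is exactly a hyperbolic-contraction statement on $[0,1)$; the $\min$ then appears because when the naive bound would force the variable negative, the true constraint is $\dim\le 1$, i.e.\ the variable $\ge 0$. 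The final sentence ($\delta=1\Rightarrow\dim(g(A))\ge1-k^2$) is then immediate: $l=0$, so $\Delta(1,k)=1-k^2$, and this is just the single-sided Harnack bound $v(\lambda)\le\frac{1+k}{1-k}v(0)$ with $v(0)=1/2$, giving $1/\dim(g(A))-1/2\le\frac{1}{2}\cdot\frac{1+k}{1-k}$, i.e.\ $\dim(g(A))\ge(1-k)/1=1-k^2$ after rearrangement.
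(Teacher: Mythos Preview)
Your approach has a genuine gap: the standard Harnack inequality applied to $v(\lambda)=1/\dim(A_\lambda)-1/2$ only recovers Astala's bound (Theorem~\ref{T:qcdistortion}), not the sharper Prause--Smirnov bound. Your own closing computation shows this: with $\delta=1$ you obtain $v(k)\le\tfrac{1+k}{1-k}\cdot\tfrac12$, i.e.\ $1/\dim(g(A))\le 1/(1-k)$, whence $\dim(g(A))\ge 1-k$, not $1-k^2$. The line ``$(1-k)/1=1-k^2$'' is simply false, and no amount of algebraic bookkeeping will turn the Harnack constant $(1+k)/(1-k)$ into the M\"obius expression $(k+l)/(1+kl)$.

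What is missing is the \emph{symmetry} of the situation and the extra constraint it provides. By definition a $k$-quasisymmetric map extends to a $k$-quasiconformal $g:\CC\to\CC$ with $g(\bar z)=\overline{g(z)}$; the resulting Beltrami path gives a \emph{symmetric} holomorphic motion, $f_\lambda(z)=\overline{f_{\bar\lambda}(\bar z)}$. Two consequences are essential: (a) the inf-harmonic function governing $1/\dim$ can be taken to be an infimum of \emph{symmetric} harmonic functions (Lemma~\ref{L:heartMsymmetric}); and (b) for real $\lambda$ one has $A_\lambda\subset\RR$, hence $\dim(A_\lambda)\le1$, i.e.\ $v(\lambda)\ge\tfrac12$ on $(-1,1)$. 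Point (b) is an extra constraint that Harnack cannot see. The paper exploits it by passing from each symmetric harmonic $h\ge v$ to its holomorphic completion $H$ with $\Re H=h$, forming the Cayley transform $\phi=(2H-1)/(2H+1):\DD\to\DD$, and observing that $\phi$ is real and \emph{nonnegative} on $(-1,1)$ --- precisely because $v\ge\tfrac12$ there. A Schwarz--Pick type lemma for such $\phi$ (Lemma~\ref{L:SchwarzPick}) then yields $\phi(k)\le\bigl((k+\sqrt{\phi(0)})/(1+k\sqrt{\phi(0)})\bigr)^2$, which after taking the infimum over $h$ and unwinding gives exactly $\Delta(\delta,k)$. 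Your gesture toward ``the substituted variable $\sqrt{1-\dim(A_\lambda)}$'' is pointing at $\sqrt{\phi}$, but without the symmetry and the nonnegativity-on-the-real-axis constraint there is no Schwarz--Pick bound available, and the argument cannot close.
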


The remainder of the paper is organized as follows. We review the notions of Hausdorff, packing and Minkowski dimensions in \S\ref{S:dimension}.
In \S\ref{S:holomotions} we discuss  holomorphic motions in more detail,
in particular their relation to quasiconformal maps.
The basic properties of inf-harmonic functions that we need are developed
in \S\ref{S:infharmonic}.
Our main results, Theorems~\ref{T:Minkowski},
\ref{T:packing}, \ref{T:Hausdorff}, \ref{T:converse} and \ref{T:Area},
are proved in \S\S\ref{S:Minkowski}--\ref{S:Area}.
The applications to quasiconformal mappings,
namely Theorems \ref{T:qcdistortion}, \ref{T:areadistortion}, \ref{T:Smirnov} and
\ref{T:PrauseSmirnov}, are treated
in \S\ref{S:quasiconformal}. We conclude in \S\ref{S:conclusion}
with an open problem.


\section{Notions of dimension}\label{S:dimension}

In this section we present a very brief review of some basic  notions
of  dimension,
introducing the  notation, and
concentrating on the aspects that will be useful to us later.
Our account is based on the books of Bishop--Peres \cite{BP17} and  Falconer \cite{Fa14}.

\subsection{Hausdorff dimension}

We begin with the definition. Let $A\subset\CC$.
For $s\ge0$ and $\delta>0$, define
\[
\cH_\delta^s(A):=\inf\Bigl\{\sum_{j=1}^\infty \diam(A_j)^s\Bigr\},
\]
where the infimum is taken over all countable covers $\{A_j\}$ of $A$
by sets of diameter at most $\delta$. Since $\cH^s_\delta(A)$
increases as $\delta$ decreases, the limit
\[
\cH^s(A):=\lim_{\delta\to0} \cH^s_\delta(A)
\]
exists, possibly $0$ or $\infty$.
The set function $\cH^s(\cdot)$ is an outer measure on~$\CC$,
called the \emph{$s$-dimensional Hausdorff measure}.
The \emph{Hausdorff dimension} of $A$ is  defined as the unique real number $\dim_H(A)\in[0,2]$ such that
\[
\cH^s(A)=
\begin{cases}
\infty, &s<\dim_H(A),\\
0, &s>\dim_H(A).
\end{cases}
\]

We shall need a slight variant of this construction.
A \emph{dyadic square} is a subset  of $\CC$ of the form
$Q=[m2^{-k}, (m+1)2^{-k})\times[n2^{-k},(n+1)2^{-k})$, where $k,m,n$
are integers (possibly negative).
Define
\[
\tilde{\cH}_\delta^s(A):=\inf\Bigl\{\sum_{j=1}^\infty \diam(Q_j)^s\Bigr\},
\]
where now the infimum is taken merely over countable covers
$\{Q_j\}$ of $A$ by dyadic squares of diameter at most $\delta$.
As before, we also set
\[
\tilde{\cH}^s(A):=\lim_{\delta\to0} \tilde{\cH}^s_\delta(A).
\]
Clearly we have $\tilde{\cH}^s_\delta(A)\ge \cH^s_\delta(A)$ for all
$\delta$,
and hence $\tilde{\cH}^s(A)\ge \cH^s(A)$. Also,
it is not hard to see that any bounded subset of $\CC$
can be covered by $9$  dyadic squares of smaller diameter,
from which it follows  that $\tilde{\cH}^s_\delta(A)\le 9\cH^s_\delta(A)$ for all $\delta$, and hence
$\tilde{\cH}^s(A)\le 9\cH^s(A)$.
In particular, we deduce the following result.

\begin{proposition}\label{P:dyadic}
With the above notation, we have
\[
\tilde{\cH}^s(A)=
\begin{cases}
\infty, &s<\dim_H(A),\\
0, &s>\dim_H(A).
\end{cases}
\]
\end{proposition}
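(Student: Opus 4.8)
The plan is to sandwich $\tilde{\cH}^s(A)$ between $\cH^s(A)$ and a fixed multiple of it, and then read off the dichotomy directly from the definition of $\dim_H(A)$. Both of the needed comparison inequalities have already been recorded in the discussion preceding the statement: on the one hand, any countable cover of $A$ by dyadic squares of diameter at most $\delta$ is in particular a cover of $A$ by sets of diameter at most $\delta$, so $\cH^s_\delta(A)\le\tilde{\cH}^s_\delta(A)$; on the other hand, replacing each member $A_j$ of an arbitrary such cover by the (at most) nine dyadic squares of no larger diameter needed to cover it, and using $\diam(Q)^s\le\diam(A_j)^s$ for each of those squares, gives $\tilde{\cH}^s_\delta(A)\le 9\,\cH^s_\delta(A)$. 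Both hold for every $\delta>0$, so letting $\delta\to0$ yields
\[
\cH^s(A)\le\tilde{\cH}^s(A)\le 9\,\cH^s(A)\qquad(s\ge0).
\]

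From this sandwich it follows at once that $\tilde{\cH}^s(A)=\infty$ exactly when $\cH^s(A)=\infty$, and $\tilde{\cH}^s(A)=0$ exactly when $\cH^s(A)=0$. It then remains only to invoke the definition of the Hausdorff dimension: if $0\le s<\dim_H(A)$ then $\cH^s(A)=\infty$, whence $\tilde{\cH}^s(A)\ge\cH^s(A)=\infty$; and if $s>\dim_H(A)$ then $\cH^s(A)=0$, whence $\tilde{\cH}^s(A)\le 9\,\cH^s(A)=0$. This is precisely the asserted behaviour of $\tilde{\cH}^s(A)$.

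I do not anticipate any genuine obstacle. Both comparison inequalities are already in hand, and the passage from the two-sided bound to the dichotomy is immediate. The only point that merits a word is the elementary covering estimate behind the upper bound --- that a bounded subset of $\CC$ of diameter $d$ meets only a bounded number (here at most nine) of dyadic squares once the grid scale is taken comparable to $d$ --- but this is entirely routine and has already been noted in the text. As a remark, the same sandwich shows that $\dim_H(A)$ could equally well have been \emph{defined} using the dyadic measures $\tilde{\cH}^s$ in place of $\cH^s$.
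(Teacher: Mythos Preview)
Your proposal is correct and follows exactly the paper's own approach: the paper records the two-sided comparison $\cH^s(A)\le\tilde{\cH}^s(A)\le 9\,\cH^s(A)$ in the discussion immediately preceding the proposition and then simply states ``In particular, we deduce the following result.'' There is nothing to add.
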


Dyadic squares have the property that any two of them are either nested or disjoint. Thus the sets $Q_j$ in the definition of
$\tilde{\cH}^s_\delta(A)$ may be taken to be disjoint. This will
be useful for us later. For more on this, see \cite[\S1.3, p.11]{BP17}.

We conclude by noting that Hausdorff dimension is
\emph{countably stable}, i.e., for any sequence of sets $(A_j)$
we have $\dim_H(\cup_{j\ge1}A_j)=\sup_{j\ge1}\dim_H(A_j)$
(see e.g.\ \cite[p.49]{Fa14}).

\subsection{Packing dimension}

The notion of packing dimension
is in some sense dual to that of Hausdorff dimension.
It was introduced by Tricot in \cite{Tr82}.

Once again, we begin with the definition.
Let $A\subset\CC$. For $s\ge0$ and $\delta>0$, define
\[
\cP^s_\delta(A):=\sup\Bigl\{\sum_{j=1}^n \diam(D_j)^s\Bigr\},
\]
where the supremum is taken over all finite sets of disjoint disks $\{D_j\}$
with centres in $A$ and of diameters at most $\delta$.
Since $P_\delta^s(A)$ decreases as $\delta$ decreases, the limit
\[
\cP_0^s(A):=\lim_{\delta\to0}\cP_\delta^s(A)
\]
exists, possibly $0$ or $\infty$.
This is not yet an outer measure, because it is not countably subadditive.
It is sometimes called
the \emph{$s$-dimensional pre-packing measure} of $A$. We
modify it to make it an outer measure, defining the
\emph{$s$-dimensional packing measure} of $A$ by
\[
\cP^s(A):=\inf\Bigl\{\sum_{j\ge1}\cP_0^s(A_j):A=\cup_{j\ge1}A_j\Bigr\},
\]
where the infimum is taken over all countable covers of $A$ by
subsets $(A_j)_{j\ge1}$.
The \emph{packing dimension} of $A$
is then
defined as the unique real number $\dim_P(A)\in[0,2]$ such that
\[
\cP^s(A)=
\begin{cases}
\infty, &s<\dim_P(A),\\
0, &s>\dim_P(A).
\end{cases}
\]

As in the case of Hausdorff dimension, the packing dimension is countably stable:
$\dim_P(\cup_{j\ge1}A_j)=\sup_{j\ge1}\dim_P(A_j)$. Also, we always have
\[
\dim_H(A)\le\dim_P(A),
\]
and the inequality may be strict.

\subsection{Minkowski dimension}

Let $A$ be a bounded subset of $\CC$.
Given $\delta>0$, we denote by $N_\delta(A)$  the
smallest number of sets of diameter at most~$\delta$ needed to cover $A$.
The \emph{upper} and \emph{lower Minkowski dimensions}
of $A$ are respectively defined by
\[
\overline{\dim}_M(A):=\limsup_{\delta\to0}\frac{\log N_\delta(A)}{\log(1/\delta)}
\quad\text{and}\quad
\underline{\dim}_M(A):=\liminf_{\delta\to0}\frac{\log N_\delta(A)}{\log(1/\delta)}.
\]
Of course we always have $\underline{\dim}_M(A)\le \overline{\dim}_M(A)$. The inequality may be strict.
If equality holds, then we speak simply of the Minkowski dimension
of $A$, denoted $\dim_M(A)$. It is also  called the
\emph{box-counting dimension} of $A$.

The Minkowski dimension has the virtue of simplicity,
but it also suffers from the drawback that,
unlike the Hausdorff and packing dimensions,
it is not countably stable, i.e., it can happen that
$\dim_M(\cup_jA_j)>\sup_j\dim_M(A_j)$.

There is a useful relationship between upper Minkowski
dimension and the pre-packing measure $\cP_0^s$
introduced in the previous subsection.
The following result is due to Tricot \cite[Corollary~2]{Tr82}.

\begin{proposition}\label{P:Tricot}
If $A$ is a bounded subset of $\CC$, then
\[
\cP_0^s(A)=
\begin{cases}
\infty, &s<\overline{\dim}_M(A),\\
0, &s>\overline{\dim}_M(A).
\end{cases}
\]
\end{proposition}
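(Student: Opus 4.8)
The statement to prove is Proposition~\ref{P:Tricot}: for bounded $A\subset\CC$, the pre-packing measure $\cP_0^s(A)$ is $\infty$ when $s<\overline{\dim}_M(A)$ and $0$ when $s>\overline{\dim}_M(A)$. Although this is attributed to Tricot, here is how I would prove it directly, relating the packing function $\cP_\delta^s$ to the covering number $N_\delta$.

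\textbf{Plan.} The key is a two-sided comparison between $\cP_\delta^s(A)$ and $N_\delta(A)\delta^s$ (up to harmless constants and a change of scale). First I would establish a \emph{packing number} $P_\delta(A)$: the largest number of disjoint disks of radius $\delta$ (equivalently diameter $2\delta$) centred in $A$. A standard covering/packing duality gives $P_{2\delta}(A)\le N_\delta(A)\le P_{\delta/2}(A)$ or similar: a maximal $\delta$-separated subset of $A$ gives disjoint disks of radius $\delta/2$ and simultaneously its $\delta$-balls cover $A$. Consequently
\[
c_1 P_\delta(A)\,\delta^s \;\le\; \cP_\delta^s(A)\;\le\; c_2 N_{\delta}(A)\,\delta^s
\]
for appropriate absolute constants and comparable scales of $\delta$; the left inequality is immediate from the definition of $\cP_\delta^s$ (one admissible family), and for the right one I would note that in the supremum defining $\cP_\delta^s$, any disjoint family of disks of diameter $\le\delta$ with centres in $A$ has at most $\asymp N_{\delta/2}(A)$ members, since each cover-element of diameter $\delta/2$ can meet only a bounded number of such disks.

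\textbf{Carrying it out.} Passing to the limit $\delta\to 0$ and taking $\log/\log(1/\delta)$, the inequalities above show that $\cP_\delta^s(A)$ behaves, on the exponential scale, like $\delta^{s-\overline{\dim}_M(A)}$ up to bounded factors: if $s>\overline{\dim}_M(A)$, then for every $\epsilon>0$ with $s-\epsilon>\overline{\dim}_M(A)$ we have $N_\delta(A)\le \delta^{-(s-\epsilon)}$ for all small $\delta$, hence $\cP_\delta^s(A)\le c_2\delta^{s}\delta^{-(s-\epsilon)}=c_2\delta^{\epsilon}\to 0$, giving $\cP_0^s(A)=0$. Conversely, if $s<\overline{\dim}_M(A)$, pick $s<t<\overline{\dim}_M(A)$; by definition of $\limsup$ there is a sequence $\delta_n\to 0$ with $N_{\delta_n}(A)\ge \delta_n^{-t}$, hence $P_{\delta_n}(A)\gtrsim \delta_n^{-t}$, so $\cP_{\delta_n}^s(A)\ge c_1\delta_n^{-t}\delta_n^{s}=c_1\delta_n^{s-t}\to\infty$; since $\cP_\delta^s(A)$ is monotone in $\delta$, this forces $\cP_0^s(A)=\infty$.

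\textbf{Main obstacle.} The only delicate point is the right-hand comparison $\cP_\delta^s(A)\lesssim N_{\delta}(A)\delta^s$: one must bound the cardinality of an arbitrary disjoint disk family (with possibly \emph{varying} radii all $\le\delta/2$) in terms of a covering number at a single scale. The clean way is to discard the diameters and simply bound $\sum_j\diam(D_j)^s\le \delta^s \cdot \#\{j\}$ and then bound $\#\{j\}$: since the $D_j$ are disjoint with centres in $A$ and an optimal cover of $A$ uses $N_{\delta/2}(A)$ sets of diameter $\le\delta/2$, and each such set, having diameter less than the radius of any $D_j$, can contain the centres of at most a bounded number (in fact at most one, if one is slightly careful with the radii bookkeeping) of the $D_j$, we get $\#\{j\}\lesssim N_{\delta/2}(A)$. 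Absorbing the scale change $\delta/2\leftrightarrow\delta$ into the $\limsup$ (which is insensitive to it) completes the argument. I would then simply cite \cite[Corollary~2]{Tr82} for the precise statement, since the proposition is not original, and keep the above as the sketch behind it.
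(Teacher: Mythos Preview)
The paper does not actually prove this proposition; it is stated with a bare citation to Tricot \cite[Corollary~2]{Tr82}. So there is nothing to compare your argument against, and I will simply assess it on its own merits.

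Your treatment of the case $s<\overline{\dim}_M(A)$ is fine: a maximal $\delta$-separated subset of $A$ simultaneously gives a cover and a packing, so $N_\delta$ and $P_\delta$ are comparable at comparable scales, and you correctly extract a sequence $\delta_n\to 0$ along which $\cP^s_{\delta_n}(A)\to\infty$; monotonicity of $\delta\mapsto\cP^s_\delta(A)$ then forces $\cP_0^s(A)=\infty$.

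The case $s>\overline{\dim}_M(A)$, however, has a genuine gap. Your ``clean way'' bounds $\sum_j \diam(D_j)^s\le \delta^s\cdot\#\{j\}$ and then claims $\#\{j\}\lesssim N_{\delta/2}(A)$ on the grounds that ``each cover set, having diameter less than the radius of any $D_j$, can contain the centres of at most a bounded number of the $D_j$''. But the $D_j$ are only required to have diameter \emph{at most} $\delta$; their radii can be arbitrarily small, so a single cover set of diameter $\delta/2$ can contain the centres of arbitrarily many of them. For instance, with $A=[0,1]$ one can pack $\asymp 1/\epsilon$ disjoint disks of diameter $\epsilon\ll\delta$ with centres in $A$, vastly exceeding $N_{\delta/2}(A)\asymp 1/\delta$. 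The inequality $\#\{j\}\lesssim N_{\delta/2}(A)$ is therefore false, and the crude bound $\sum_j\diam(D_j)^s\le\delta^s\cdot\#\{j\}$ cannot be rescued at a single scale.

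The standard repair is a dyadic stratification: sort the $D_j$ by diameter into annuli $\diam(D_j)\in(2^{-k-1},2^{-k}]$. Within each scale $k$ the disks have radii bounded below by $2^{-k-2}$, so their centres are $2^{-k-1}$-separated and there are at most $\lesssim N_{2^{-k}}(A)$ of them. Summing $\sum_k 2^{-ks}N_{2^{-k}}(A)$ and using $N_{2^{-k}}(A)\le C\,2^{kt}$ for some $t\in(\overline{\dim}_M(A),s)$ gives a convergent geometric series that tends to $0$ as the maximal allowed diameter shrinks. With this fix your outline becomes a correct proof.
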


Using this result, we can express the packing dimension
in terms of the upper Minkowski dimension.
The following theorem is again due to Tricot \cite[Proposition~2]{Tr82}, see also \cite[Theorem~2.7.1]{BP17}.

\begin{proposition}\label{P:Minkowskipacking}
If $A$ is a subset of $\CC$, then
\[
\dim_P(A)=\inf\Bigl\{\sup_{j\ge1}\overline{\dim}_M(A_j):A=\cup_{j\ge1}A_j\Bigr\},
\]
where the infimum is taken over all countable covers
of $A$ by bounded subsets $(A_j)$.
\end{proposition}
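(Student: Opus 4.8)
The plan is to prove the two inequalities separately, so write $d$ for the infimum on the right-hand side. First I would note that countable covers of $A$ by bounded sets always exist — for instance $A=\bigcup_{k\ge1}(A\cap B(0,k))$ — so the infimum is taken over a nonempty family and $d$ is well defined.

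For the inequality $\dim_P(A)\le d$, let $A=\bigcup_{j\ge1}A_j$ be an arbitrary cover by bounded sets. The first step is the elementary observation that $\dim_P(B)\le\overline{\dim}_M(B)$ for every bounded $B\subset\CC$: by Proposition~\ref{P:Tricot}, $s>\overline{\dim}_M(B)$ gives $\cP_0^s(B)=0$, and since the trivial cover $B=B$ shows $\cP^s(B)\le\cP_0^s(B)$, we get $\cP^s(B)=0$, hence $s\ge\dim_P(B)$; letting $s\downarrow\overline{\dim}_M(B)$ yields the claim. Combining this with the countable stability of the packing dimension recalled above, $\dim_P(A)=\sup_j\dim_P(A_j)\le\sup_j\overline{\dim}_M(A_j)$, and taking the infimum over all such covers gives $\dim_P(A)\le d$.

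For the reverse inequality $d\le\dim_P(A)$, fix any $s>\dim_P(A)$, so that $\cP^s(A)=0$. By the definition of the packing measure there is a countable cover $A=\bigcup_{j\ge1}E_j$ with $\sum_j\cP_0^s(E_j)<\infty$, and in particular $\cP_0^s(E_j)<\infty$ for every $j$. Since the $E_j$ need not be bounded, I would pass to the refined cover $A=\bigcup_{j,k}(E_j\cap B(0,k))$; by monotonicity of $\cP_0^s$ each piece still has finite pre-packing measure, and it is now bounded. Proposition~\ref{P:Tricot} then applies to each bounded piece $F:=E_j\cap B(0,k)$: because $\cP_0^s(F)<\infty$, the case $s<\overline{\dim}_M(F)$ is excluded, so $\overline{\dim}_M(F)\le s$. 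Hence the refined cover witnesses $d\le s$, and letting $s\downarrow\dim_P(A)$ gives $d\le\dim_P(A)$. Putting the two inequalities together completes the proof.

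Given Propositions~\ref{P:Tricot} and the countable stability of $\dim_P$, the argument is essentially bookkeeping; the only point that requires a little care is the passage from a cover realizing $\cP^s(A)=0$ to one consisting of bounded sets, which I expect to be the main (though minor) obstacle and which is handled by the intersection-with-balls trick together with monotonicity of $\cP_0^s$.
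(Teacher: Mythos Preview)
Your argument is correct. Both inequalities are handled cleanly: the forward direction via countable stability of $\dim_P$ together with the elementary bound $\dim_P(B)\le\overline{\dim}_M(B)$ for bounded $B$ (which you correctly derive from Proposition~\ref{P:Tricot}), and the reverse direction by extracting from $\cP^s(A)<\infty$ a cover with $\cP_0^s(E_j)<\infty$, refining to bounded pieces, and invoking Proposition~\ref{P:Tricot} again. The monotonicity of $\cP_0^s$ that you use in the refinement step is immediate from the definition, since any packing of a subset is a packing of the ambient set.

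Note, however, that the paper does not supply its own proof of this proposition: it is quoted from Tricot \cite[Proposition~2]{Tr82} (see also \cite[Theorem~2.7.1]{BP17}), so there is no in-paper argument to compare against. Your proof is essentially the standard one found in those references.
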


From this result, it is obvious that, for every bounded set $A$,
we have
\[
\dim_P(A)\le \overline{\dim}_M(A).
\]
In general the inequality can be strict.
The books \cite{BP17} and \cite{Fa14} both contain a discussion of
conditions under which equality holds.

\subsection{Similarity dimension}
There is one further notion of dimension that will prove useful
in what follows. It applies to a specific example.

Consider a finite system of contractive similarities
\[
\gamma_{j}(z)=a_jz+b_j
\quad(j=1,\dots,n),
\]
where $a_1,\dots,a_n,b_1,\dots,b_n\in\CC$  and $|a_j|<1$ for all $j$.
In this situation, there is a unique compact subset $L$ of $\CC$
such that $L=\cup_{j=1}^n \gamma_j(L)$,  called the
\emph{limit set} of the iterated function system $\{\gamma_1,\dots,\gamma_n\}$.

The system $\{\gamma_1,\dots,\gamma_n\}$
is said to satisfy the \emph{open set condition}
if there exists a non-empty open subset $U$ of $\CC$ such that
$\gamma_j(U)\subset U$ for all $j$ and
$\gamma_i(U)\cap\gamma_j(U)=\emptyset$ whenever $i\ne j$.
The following result is due to Hutchinson~\cite{Hu81},
generalizing an earlier result of Moran \cite{Mo46},
see also \cite[Theorem~9.3]{Fa14} or \cite[Theorem~2.2.2]{BP17}.

\begin{theorem}\label{T:simdim}
If the system $\{\gamma_1,\dots,\gamma_n\}$
satisfies the open set condition,
then the Hausdorff and packing dimensions of its limit set $L$
are given by $\dim_H(L)=\dim_P(L)=s$,
where $s$ is the unique solution of the equation
\[
\sum_{j=1}^n |a_j|^s=1.
\]
\end{theorem}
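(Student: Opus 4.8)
The plan is to prove the two inequalities $\dim_H(L)=\dim_P(L)=s$ separately, since by the general inequality $\dim_H\le\dim_P$ it suffices to show $\dim_P(L)\le s$ and $\dim_H(L)\ge s$. For the upper bound, I would use the natural cover of $L$ by the iterated images $\gamma_{w}(L):=\gamma_{w_1}\circ\cdots\circ\gamma_{w_m}(L)$ over all words $w=(w_1,\dots,w_m)\in\{1,\dots,n\}^m$, noting that $L=\bigcup_{|w|=m}\gamma_w(L)$ and that $\diam(\gamma_w(L))=|a_{w_1}|\cdots|a_{w_m}|\diam(L)$ because each $\gamma_j$ scales distances by exactly $|a_j|$. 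Summing the $s$-th powers of these diameters gives $\sum_{|w|=m}\bigl(\prod_i|a_{w_i}|\bigr)^s\diam(L)^s = \bigl(\sum_{j=1}^n|a_j|^s\bigr)^m\diam(L)^s=\diam(L)^s$, which stays bounded as $m\to\infty$ while the mesh of the cover shrinks to $0$. Hence $\mathcal{H}^s(L)<\infty$, giving $\dim_H(L)\le s$; since $\dim_P(L)\le\overline{\dim}_M(L)$ and the same cover shows $\overline{\dim}_M(L)\le s$ (the number of pieces of size $\asymp\delta$ grows polynomially in $1/\delta$), we also get $\dim_P(L)\le s$. Actually it is cleanest to bound $\overline{\dim}_M(L)\le s$ directly by choosing, for each small $\delta$, the words $w$ that are ``stopped'' as soon as $\prod_i|a_{w_i}|$ drops below $\delta$, and counting them.

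For the lower bound $\dim_H(L)\ge s$, I would invoke the mass distribution principle (Frostman's lemma in its easy direction): it suffices to construct a Borel probability measure $\nu$ on $L$ together with constants $c>0$ and $r_0>0$ such that $\nu(B(x,r))\le c\,r^s$ for all $x$ and all $r\le r_0$. The natural candidate is the self-similar measure: define $\nu$ on cylinders by $\nu(\gamma_w(L))=\bigl(\prod_i|a_{w_i}|\bigr)^s = \prod_i p_{w_i}$ where $p_j:=|a_j|^s$, so that $\sum_j p_j=1$ makes $\nu$ a consistent probability measure via Kolmogorov extension (pushed forward from the Bernoulli measure on the symbol space). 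The open set condition is then used to prove the required upper regularity: fixing the open set $U$ from the hypothesis, one shows there is an integer $M$ (depending only on the $\gamma_j$ and on a ball $B(x_0,\rho_1)\subset U\subset B(x_0,\rho_2)$) such that any ball $B(x,r)$ meets at most $M$ cylinders $\gamma_w(L)$ whose generation is ``$r$-adapted'' — i.e.\ whose contraction ratio lies between $r$ and $(\min_j|a_j|)r$. Because the sets $\gamma_w(U)$ for such $w$ are pairwise disjoint, each contains a ball of radius $\asymp r$, and they all lie near $B(x,r)$, a volume (packing) argument in the plane bounds their number by a constant $M$. Adding up their $\nu$-masses, each of which is $\le r^s$ up to a fixed constant, yields $\nu(B(x,r))\le c\,r^s$.

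The main obstacle, and the place where the open set condition is genuinely needed, is exactly this last regularity estimate — controlling how many of the $r$-adapted cylinders a single ball of radius $r$ can intersect. Without the open set condition the cylinders $\gamma_w(L)$ can overlap uncontrollably and $\nu$ can concentrate, destroying the bound $\nu(B(x,r))\le c r^s$; the open set condition is the device that converts the combinatorial/symbolic picture into a genuinely geometric, bounded-overlap picture in $\CC$. I would organize this as a separate lemma: \emph{there exists $M\in\mathbb{N}$ such that for every $x\in\CC$ and every $r>0$, the number of $r$-adapted words $w$ with $\gamma_w(L)\cap B(x,r)\ne\emptyset$ is at most $M$}. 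Everything else — consistency of $\nu$, the diameter computation, and the two applications of the mass distribution principle and of $\dim_P\le\overline\dim_M$ — is routine. One should also remark that the limit set $L$ exists and is unique by the Banach fixed point theorem applied to the Hutchinson operator $X\mapsto\bigcup_j\gamma_j(X)$ on the complete metric space of nonempty compact subsets of $\CC$ with the Hausdorff metric, though in the present paper this may simply be cited from \cite{Hu81} or \cite{Fa14}.
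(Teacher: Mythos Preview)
Your sketch is correct and is essentially the standard Hutchinson--Moran argument: natural covers by cylinder sets give the upper bound $\overline{\dim}_M(L)\le s$ (hence $\dim_P(L)\le s$), and the self-similar measure together with a volume-packing count of $r$-adapted cylinders, made possible by the open set condition, gives $\dim_H(L)\ge s$ via the mass distribution principle.

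However, the paper itself does not prove this theorem. It is stated as background in \S\ref{S:dimension} and attributed to Hutchinson \cite{Hu81} (generalizing Moran \cite{Mo46}), with pointers to \cite[Theorem~9.3]{Fa14} and \cite[Theorem~2.2.2]{BP17} for proofs. So there is no ``paper's own proof'' to compare against; what you have written is exactly the argument one finds in those cited references.
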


The number $s$ (with or without the open set condition) is  called the
\emph{similarity dimension} of the system.


\section{Holomorphic motions and quasiconformal maps}\label{S:holomotions}
Holomorphic motions were defined in Definition~\ref{D:holomotion}.
As was mentioned in the introduction, they were introduced in \cite{MSS83}
by Ma\~{n}\'{e}, Sad and Sullivan, who also established the $\lambda$-lemma.
Their result was later improved by Slodkowski in \cite{Sl91},
confirming a conjecture of
Sullivan and Thurston  \cite{ST86}.
Slodkowski's result is often called the extended $\lambda$-lemma.
There are now several proofs; another one can be found in
\cite[\S12]{AIM09}.

\begin{theorem}[Extended $\lambda$-lemma]\label{T:extended}
A holomorphic motion $f:\DD\times A\to\CC$
has an extension to a holomorphic motion
$F:\DD\times\CC\to\CC$. The function $F$ is jointly continuous
on $\DD\times\CC$.
\end{theorem}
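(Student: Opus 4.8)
The statement to prove is the Extended $\lambda$-lemma (Theorem~\ref{T:extended}): every holomorphic motion $f\colon\DD\times A\to\CC$ of an arbitrary set $A\subset\CC$ extends to a holomorphic motion $F\colon\DD\times\CC\to\CC$, with $F$ jointly continuous.

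The plan is to proceed in three stages, each enlarging the parameter set on which the motion is defined. First I would reduce to the case where $A$ is a compact set containing $0,1,\infty$, by pre- and post-composing with Möbius transformations (a Möbius conjugacy turns any holomorphic motion into another, and one can always normalize three points); one then only needs to extend over $\overline A$, which is handled by the original Mañé–Sad–Sullivan $\lambda$-lemma (the $\lambda$-lemma quoted in the introduction), since $\lambda\mapsto f_\lambda$ is then uniformly continuous in a suitable sense and the extension to the closure is automatic and jointly continuous. Second, following Słodkowski, I would show that one can always extend a holomorphic motion of a finite set $A_n=\{z_1,\dots,z_n\}$ to a holomorphic motion of $A_n\cup\{z_{n+1}\}$ for any additional point $z_{n+1}$; iterating and taking closures gives an extension to a holomorphic motion of a countable dense subset of $\CC$, and then of all of $\CC$ by continuity. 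The heart of the matter is this one-point extension step.

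For the one-point extension, the classical route uses the Beurling–Ahlfors / Bers–Royden machinery: a holomorphic motion of an $n$-point set corresponds to a holomorphic family of complex dilatations, equivalently a holomorphic family of points in a finite-dimensional Teichmüller space (the Teichmüller space of the $n$-punctured sphere), and one wants to show a holomorphic disk in $T(0,n)$ can be lifted through the forgetful fibration $T(0,n+1)\to T(0,n)$. This is where Słodkowski's argument is genuinely hard and non-elementary: it ultimately rests on the fact that the Teichmüller–Kobayashi metric on these spaces is well-behaved, together with a clever use of the Schwarz lemma and a holomorphic selection argument, or alternatively on the harmonic-analysis approach via the $\overline\partial$-equation and the Beltrami equation with holomorphic parameter dependence. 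I would cite this as the main obstacle and either reproduce the Teichmüller-theoretic lifting argument or, more efficiently for a self-contained treatment, follow the proof in \cite[\S12]{AIM09}, which packages the key extension as a consequence of an extremal problem for holomorphic maps into the unit ball of $L^\infty$.

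Finally, joint continuity of the resulting $F$ on $\DD\times\CC$ follows once the extension exists: each $F_\lambda\colon\CC\to\CC$ is $K(\lambda)$-quasiconformal with $K(\lambda)=\frac{1+|\lambda|}{1-|\lambda|}$ (this is the standard estimate coming from the Schwarz lemma applied to $\lambda\mapsto f_\lambda(z)$ for each fixed $z$, which forces the complex dilatation to satisfy $\|\mu_{F_\lambda}\|_\infty\le|\lambda|$), and a normal-families argument together with the holomorphic — hence continuous — dependence of $F(\lambda,z)$ on $\lambda$ for each fixed $z$ upgrades separate continuity to joint continuity via equicontinuity of the family $\{F_\lambda\}$ on compact sets. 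I would remark that for the applications in this paper one does not strictly need the full force of Słodkowski's theorem — extension to the closure $\overline A$ (the original $\lambda$-lemma) plus the observation that $F_\lambda$ is quasiconformal would suffice in several places — but I will state and use the extended version since it streamlines the treatment of area in \S\ref{S:Area}.
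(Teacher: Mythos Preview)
The paper does not prove Theorem~\ref{T:extended} at all: it is stated as background, attributed to Slodkowski \cite{Sl91}, with a pointer to \cite[\S12]{AIM09} for a proof. So there is nothing to compare against --- the intended ``proof'' here is simply a citation.

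Your outline is a reasonable sketch of how the extended $\lambda$-lemma is actually proved, and you correctly identify the one-point extension as the genuine difficulty (Slodkowski's contribution). A couple of minor corrections to your sketch: the reduction in your first stage is slightly garbled --- holomorphic motions in this paper take values in $\CC$, not $\Coo$, so normalizing $\infty$ is not quite the right move, and in any case the Ma\~n\'e--Sad--Sullivan $\lambda$-lemma already gives the extension to $\overline{A}$ directly without any M\"obius preconditioning. More substantively, your description of the one-point extension via lifting holomorphic disks through the forgetful map $T(0,n+1)\to T(0,n)$ is the Bers--Royden / Sullivan--Thurston \emph{partial} result (extension over a smaller disk), not Slodkowski's full theorem; Slodkowski's original argument goes through a topological result on holomorphic selections related to the corona problem, and the treatment in \cite[\S12]{AIM09} that you cite uses an equivariant extension theorem. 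Either way, for the purposes of this paper you should simply cite the result, as the authors do.
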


As was already remarked in \cite{MSS83},
holomorphic motions are closely related to
quasiconformal maps.
We now define this term and state some results that will be needed
in the sequel.
Our treatment follows that in \cite{AIM09}.

\begin{definition}\label{D:quasiconformal}
Let $\Omega,\Omega'$ be plane domains. A homeomorphism
$f:\Omega\to \Omega'$ is called \emph{quasiconformal} if:
\begin{enumerate}[\normalfont(i)]
\item $f$ is orientation-preserving;
\item its distributional Wirtinger derivatives $\partial f/\partial z$ and $\partial f/\partial\overline{z}$ both belong to $L^2_{\loc}(\Omega)$, and
\item $f$ satisfies the \emph{Beltrami equation}:
\[
\frac{\partial f}{\partial \overline{z}}=\mu_f \, \frac{\partial f}{\partial z}\quad\text{a.e. on $\Omega$},
\]
where $\mu_f$ is a measurable function on $\Omega$ such that $\|\mu_f\|_{\infty} <1$.
\end{enumerate}
The function $\mu_f$ is called the \emph{Beltrami coefficient} or
\emph{complex dilatation} of~$f$.
We shall say that the mapping  $f$ is  \emph{$k$-quasiconformal} if
$\|\mu_f\|_\infty\le k$.
\end{definition}

\begin{remark}
Many authors (including those of \cite{AIM09}) use the term $K$-quasi\-con\-formal to mean $k$-quasi\-conformal in our sense with $K=(1+k)/(1-k)$.
\end{remark}

We shall  need the following fundamental result on the existence and uniqueness of solutions to the Beltrami equation
\cite[Theorem~5.3.4]{AIM09}.

\begin{theorem}[Measurable Riemann mapping theorem]\label{T:MRMT}
Let $\mu$ be a measurable function on $\CC$ with $\|\mu\|_\infty <1$. Then there exists a unique quasi\-conformal mapping $f:\CC \to \CC$ fixing $0$ and $1$ with
$\mu_f=\mu$ a.e.\ on $\CC$.
\end{theorem}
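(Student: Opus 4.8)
The plan is to follow the classical Ahlfors--Bers construction; since Theorem~\ref{T:MRMT} is a standard result (for which we have given the reference \cite{AIM09}), I will only outline the argument. Introduce the Cauchy transform $Pg(z)=-\frac{1}{\pi}\int_\CC\frac{g(w)}{w-z}\,dA(w)$ and the Beurling transform $Sg(z)=-\frac{1}{\pi}\,\mathrm{p.v.}\!\int_\CC\frac{g(w)}{(w-z)^2}\,dA(w)$, which satisfy $\partial_{\overline z}(Pg)=g$ and $\partial_z(Pg)=Sg$ in the distributional sense, and recall that $S$ is a unitary operator on $L^2(\CC)$.

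First I would treat the case in which $\mu$ is supported in a compact set, say $\overline\DD$. The analytic heart of the matter is that $\|S\|_{L^2\to L^2}=1$ while $p\mapsto\|S\|_{L^p\to L^p}$ is continuous (by the $L^p$-boundedness of the Beurling transform together with Riesz--Thorin interpolation), so there is some $p>2$ for which $\|S\|_{L^p\to L^p}<1/\|\mu\|_\infty$. For such a $p$ the operator $\mu S$ is a strict contraction on $L^p(\CC)$, so the Neumann series $\omega:=\sum_{n\ge0}(\mu S)^n\mu$ converges in $L^p(\CC)$ and solves $(I-\mu S)\omega=\mu$. Setting $f(z):=z+P\omega(z)$, one computes $\partial_{\overline z}f=\omega$ and $\partial_z f=1+S\omega$, whence $\partial_{\overline z}f=\mu\,\partial_z f$ a.e.; moreover $P\omega$ is H\"{o}lder continuous (Sobolev embedding, using $\omega\in L^p$ with $p>2$ and compactly supported) and $f(z)=z+O(1/|z|)$ near $\infty$. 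Thus $f$ is a $W^{1,2}_{\loc}$ solution of the Beltrami equation, i.e.\ a nonconstant quasiregular map.

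Next I would upgrade $f$ to a quasiconformal homeomorphism. Planar quasiregular maps are open and discrete, and by Stoilow's factorization theorem $f=\phi\circ h$ with $h$ quasiconformal and $\phi$ holomorphic; comparing the behaviour at $\infty$, where $f$ is asymptotic to $z$, forces $\phi$ to have degree one, so $f$ itself is quasiconformal, with $\mu_f=\mu$ a.e. For a general measurable $\mu$ with $\|\mu\|_\infty<1$ I would remove the compact-support hypothesis by one of the standard devices: either split $\mu=\mu_0+\mu_\infty$ with $\mu_0$ supported in $\overline\DD$ and $\mu_\infty$ in $\CC\setminus\DD$, solve for $\mu_0$ as above, conjugate by $z\mapsto1/z$ to make the support of the transported dilatation compact, solve again, and compose the two maps; or approximate, solving the Beltrami equation for $\mu_n:=\mu\,\chi_{\{|z|<n\}}$, normalizing each solution to fix $0$ and $1$, and extracting a locally uniform limit via the normal-family property of uniformly $k$-quasiconformal maps, the limit having Beltrami coefficient $\mu$ on account of the uniform $W^{1,2}_{\loc}$ bounds. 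Post-composing with a suitable affine map then yields the required normalization $f(0)=0$, $f(1)=1$.

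For uniqueness, suppose $f$ and $g$ are two normalized solutions. Then $h:=g\circ f^{-1}$ is quasiconformal (the inverse and the composition of quasiconformal maps are again quasiconformal), and the transformation rule for complex dilatations under composition gives $\mu_h=0$ a.e.; hence $h$ is a $W^{1,2}_{\loc}$ homeomorphism of $\CC$ with $\partial_{\overline z}h=0$ a.e., so by Weyl's lemma $h$ is holomorphic, thus an affine self-map of $\CC$, and, since it fixes $0$ and $1$, it is the identity; therefore $f=g$. I expect the Neumann-series step to be painless; the genuine obstacles are the regularity inputs, namely the sharp $L^p$-theory of the Beurling transform near $p=2$ and the passage from a $W^{1,2}_{\loc}$ quasiregular solution to an honest homeomorphism, which relies on the openness and discreteness of planar quasiregular maps together with Stoilow factorization.
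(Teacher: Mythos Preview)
The paper does not give a proof of Theorem~\ref{T:MRMT}; it merely states the result and cites \cite[Theorem~5.3.4]{AIM09}. Your outline is the classical Ahlfors--Bers argument and is correct at the level of detail you provide, so you are supplying more than the paper does. One minor remark: the continuity of $p\mapsto\|S\|_{L^p\to L^p}$ at $p=2$ that you invoke follows from log-convexity (Riesz--Thorin) together with $\|S\|_{L^2\to L^2}=1$ and finiteness of $\|S\|_{L^q\to L^q}$ for some $q>2$, so it would be worth saying explicitly that this yields $\|S\|_{L^p\to L^p}\to1$ as $p\to2^+$; otherwise the passage to $p>2$ with $\|\mu\|_\infty\|S\|_{L^p\to L^p}<1$ is not quite justified by the ingredients you name.
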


It is well known that solutions of the Beltrami equation depend holo\-morphically on the parameter $\mu$ \cite[Corollary~5.7.5]{AIM09}. Combined with \cite[Theorem~12.3.2]{AIM09}, this is the key to the following characterization of holomorphic motions.

\begin{theorem}\label{T:motions}
Let $f:\mathbb{D} \times \CC \to \CC$ be a function. The following statements are equivalent:
\begin{enumerate}[\normalfont(i)]
\item The map $f$ is a holomorphic motion.
\item For each $\lambda \in \mathbb{D}$, the map $f_\lambda : \CC \to \CC$ is quasiconformal with Beltrami coefficient $\mu_\lambda$ satisfying $\|\mu_\lambda\|_\infty \le |\lambda|$. Moreover, the map $f_0$ is the identity, and the $L^\infty(\CC)$-valued map $\lambda \mapsto \mu_\lambda$ is holomorphic on $\mathbb{D}$.
\end{enumerate}
\end{theorem}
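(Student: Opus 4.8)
The plan is to prove Theorem~\ref{T:motions} by leveraging the two ingredients cited just before its statement: the holomorphic dependence of solutions of the Beltrami equation on the parameter \cite[Corollary~5.7.5]{AIM09}, and the characterization of holomorphic motions of $\CC$ in \cite[Theorem~12.3.2]{AIM09}. The direction (ii)$\Rightarrow$(i) is the easy one: if each $f_\lambda$ is quasiconformal with $\|\mu_\lambda\|_\infty\le|\lambda|$, $f_0=\mathrm{id}$, and $\lambda\mapsto\mu_\lambda$ is holomorphic into $L^\infty(\CC)$, then for each fixed $z$ one needs to check that $\lambda\mapsto f_\lambda(z)$ is holomorphic; this follows from \cite[Corollary~5.7.5]{AIM09}, which says precisely that the normalized solution of the Beltrami equation depends holomorphically (pointwise in $z$) on $\mu$, composed with the holomorphic map $\lambda\mapsto\mu_\lambda$. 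Injectivity of $z\mapsto f_\lambda(z)$ and the normalization $f_0=\mathrm{id}$ are built into the hypotheses, so conditions (i)--(iii) of Definition~\ref{D:holomotion} hold and $f$ is a holomorphic motion.

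For the harder direction (i)$\Rightarrow$(ii), I would start from a holomorphic motion $f:\DD\times\CC\to\CC$. The content of \cite[Theorem~12.3.2]{AIM09} is that each $f_\lambda$ is then quasiconformal with complex dilatation $\mu_\lambda$ satisfying the bound $\|\mu_\lambda\|_\infty\le|\lambda|$ (this is the quantitative form of the fact that a holomorphic motion over $\DD$ gives maps that are $|\lambda|$-quasiconformal), and I would invoke it directly. That $f_0=\mathrm{id}$ is immediate from property (iii) of Definition~\ref{D:holomotion} applied with $A=\CC$. The remaining point, and the one I expect to be the main obstacle, is showing that the assignment $\lambda\mapsto\mu_\lambda$ is holomorphic as a map into $L^\infty(\CC)$. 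Here I would use the holomorphic dependence in \cite[Corollary~5.7.5]{AIM09} in the reverse direction: one knows the normalized solution depends holomorphically on the Beltrami coefficient, and one wants to conclude that a holomorphically-varying family of solutions corresponds to a holomorphically-varying family of coefficients. Concretely, I would argue that since $\lambda\mapsto f_\lambda(z)$ is holomorphic for each $z$ and the family is locally uniformly quasiconformal, the distributional derivatives $\partial f_\lambda/\partial z$ and $\partial f_\lambda/\partial\bar z$ vary holomorphically in $\lambda$ in an appropriate weak sense, and hence so does their ratio $\mu_\lambda$; the quantitative bound $\|\mu_\lambda\|_\infty\le|\lambda|$ and a normal-families/Cauchy-integral argument then upgrade weak holomorphy to $L^\infty$-valued holomorphy. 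This is exactly the combination of \cite[Corollary~5.7.5]{AIM09} and \cite[Theorem~12.3.2]{AIM09} alluded to in the text.

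In writing this up I would keep the two directions clearly separated, cite \cite[Theorem~12.3.2]{AIM09} for the quasiconformality and the dilatation bound, cite \cite[Corollary~5.7.5]{AIM09} for both uses of holomorphic dependence, and spend the bulk of the argument on the passage between holomorphy of $\lambda\mapsto f_\lambda(z)$ and holomorphy of $\lambda\mapsto\mu_\lambda\in L^\infty(\CC)$. Since the statement is essentially a repackaging of results in \cite{AIM09}, I would expect the proof to be short, with the only genuine subtlety being the topology in which $\lambda\mapsto\mu_\lambda$ is holomorphic and the verification that the $L^\infty$ norm (not merely pointwise or $L^p$) behaves well, which is where the uniform bound $\|\mu_\lambda\|_\infty\le|\lambda|$ does the essential work.
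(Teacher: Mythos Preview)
The paper does not actually give a proof of Theorem~\ref{T:motions}; it is stated as a known characterization, with the sentence preceding it indicating that it follows by combining \cite[Corollary~5.7.5]{AIM09} (holomorphic dependence of solutions on~$\mu$) with \cite[Theorem~12.3.2]{AIM09}. Your proposal is entirely consistent with this: you correctly identify the two cited ingredients and the roles they play in each direction, and you recognize that the result is essentially a repackaging of material from \cite{AIM09}.

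One remark on the (i)$\Rightarrow$(ii) direction: you propose to extract the $L^\infty$-holomorphy of $\lambda\mapsto\mu_\lambda$ by an argument passing through weak holomorphy of the distributional derivatives and then upgrading via the uniform bound $\|\mu_\lambda\|_\infty\le|\lambda|$. In fact \cite[Theorem~12.3.2]{AIM09} already contains this conclusion directly (that is the content of the cited result, not merely the quasiconformality and the dilatation bound), so in a write-up you would simply invoke it rather than re-derive it. Your sketch of how such an argument would go is plausible in outline, but the step ``their ratio $\mu_\lambda$ varies holomorphically'' is not automatic from weak holomorphy of numerator and denominator separately, and making it rigorous is exactly the work done in \cite{AIM09}. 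Since the paper defers entirely to that reference, there is nothing further to compare.
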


These results can be used to show that every quasiconformal homeomorphism of $\CC$ can be embedded as part of a holomorphic motion \cite[Theorem~12.5.3]{AIM09}.

\begin{theorem}\label{T:embedding}
If $F:\CC\to\CC$ is a $k$-quasiconformal homeomorphism,
then there exists a holomorphic motion $f:\DD\times\CC\to\CC$ such that $f_k=F$.
\end{theorem}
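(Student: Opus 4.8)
The plan is to build the holomorphic motion of $\CC$ directly from $F$ using the Measurable Riemann mapping theorem and the holomorphic dependence of solutions of the Beltrami equation on the dilatation parameter. First I would set $\mu:=\mu_F$, the complex dilatation of $F$, which satisfies $\|\mu\|_\infty\le k<1$. For $\lambda\in\DD$ I would like to define $f_\lambda$ to be the normalized quasiconformal solution of the Beltrami equation with dilatation $\tfrac{\lambda}{k}\mu$; note that $\|\tfrac{\lambda}{k}\mu\|_\infty\le|\lambda|<1$, so Theorem~\ref{T:MRMT} applies and gives a unique such solution fixing $0$ and $1$. The issue with this naive definition is that when $\lambda=0$ we get the identity (good), but we need $f_k=F$ on the nose, and more importantly we need to check it really is a holomorphic motion — for this I would invoke Theorem~\ref{T:motions}: it suffices to verify that $f_0=\mathrm{id}$, that $\|\mu_{f_\lambda}\|_\infty\le|\lambda|$, and that the $L^\infty(\CC)$-valued map $\lambda\mapsto\tfrac{\lambda}{k}\mu$ is holomorphic on $\DD$. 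The first holds by uniqueness in Theorem~\ref{T:MRMT} (the identity solves the $\lambda=0$ equation), the second is the norm bound just noted, and the third is immediate since $\lambda\mapsto\tfrac{\lambda}{k}\mu$ is visibly affine (indeed linear) in $\lambda$ with values in a fixed ball of $L^\infty(\CC)$. Hence $f$ is a holomorphic motion of $\CC$.

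The remaining point is the normalization: we have produced a holomorphic motion $f$ with $\mu_{f_\lambda}=\tfrac{\lambda}{k}\mu_F$, so at $\lambda=k$ we get a quasiconformal map $f_k$ with $\mu_{f_k}=\mu_F$, but a priori $f_k$ and $F$ differ by a conformal map of $\CC$, i.e.\ an affine map $z\mapsto az+b$, since two quasiconformal homeomorphisms of $\CC$ with the same dilatation differ by a conformal automorphism of $\CC$. If $F$ also fixes $0$ and $1$, then $f_k=F$ by the uniqueness clause of Theorem~\ref{T:MRMT}. In general I would reduce to this case by composing: write $F=\phi\circ G$ where $G$ is the normalized solution (fixing $0,1$) with $\mu_G=\mu_F$ and $\phi$ is the affine map carrying $G(0),G(1)$ to $F(0),F(1)$ — more simply, $\phi:=F\circ G^{-1}$ is conformal on $\CC$, hence affine. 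Then I would replace the motion $f$ above (whose time-$k$ map is exactly $G$) by $\tilde f_\lambda:=\phi\circ f_\lambda$... but this breaks condition (iii) of Definition~\ref{D:holomotion} unless $\phi=\mathrm{id}$ at $\lambda=0$. The clean fix is to interpolate the affine part too: set $\tilde f_\lambda:=\phi_\lambda\circ f_\lambda$ where $\phi_\lambda(z):=(1-\lambda/k)z+(\lambda/k)\phi(z)$ is affine in $z$ for each $\lambda$, holomorphic in $\lambda$, equals the identity at $\lambda=0$ and equals $\phi$ at $\lambda=k$; one checks each $\phi_\lambda$ is an (invertible) affine map provided its leading coefficient $(1-\lambda/k)+(\lambda/k)a$ is nonzero on $\DD$, which holds since it is a convex-type combination of $1$ and $a\ne0$ lying off... this requires a small argument, so alternatively I would just conjugate the whole picture by a fixed affine map at the outset so that $F$ fixes $0$ and $1$, prove the statement there, and conjugate back.

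In fact the slickest route, and the one I would ultimately write, avoids all of this: apply Theorem~\ref{T:MRMT}-style uniqueness only after pre-normalizing. Let $\psi$ be the affine map with $\psi(F(0))=0$, $\psi(F(1))=1$, so $F_0:=\psi\circ F$ is $k$-quasiconformal, fixes $0$ and $1$, and has $\mu_{F_0}=\mu_F$ (affine maps have zero dilatation and composition with a conformal map on the left does not change the dilatation). Define $g:\DD\times\CC\to\CC$ by letting $g_\lambda$ be the normalized solution with $\mu_{g_\lambda}=\tfrac{\lambda}{k}\mu_F$; as argued above this is a holomorphic motion with $g_0=\mathrm{id}$ and $g_k=F_0$ by uniqueness. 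Finally set $f_\lambda:=\psi_\lambda^{-1}\circ g_\lambda$ where $\psi_\lambda$ is the affine interpolation from $\mathrm{id}$ (at $\lambda=0$) to $\psi$ (at $\lambda=k$) along a path of invertible affine maps — since the invertible affine maps of $\CC$ form a connected group, such an interpolation holomorphic in $\lambda$ exists (e.g.\ realize $\psi$ as $\exp$ of a suitable generator and scale the generator by $\lambda/k$). Then $f$ is a holomorphic motion with $f_k=\psi^{-1}\circ F_0=F$. The main obstacle is purely bookkeeping: ensuring the affine normalization is interpolated through the parameter $\lambda$ in a way compatible with both $f_0=\mathrm{id}$ and $f_k=F$ while keeping $\lambda\mapsto f_\lambda$ a genuine holomorphic motion; once one observes that the group of affine automorphisms of $\CC$ is connected (even a connected Lie group), this obstacle dissolves and everything else is a direct appeal to Theorems~\ref{T:MRMT} and \ref{T:motions}.
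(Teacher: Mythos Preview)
The paper does not give a proof of Theorem~\ref{T:embedding}; it is simply stated with a citation to \cite[Theorem~12.5.3]{AIM09}. Your construction (solve the Beltrami equation with coefficient $(\lambda/k)\mu_F$ and appeal to Theorem~\ref{T:motions}) is exactly the standard argument, and is in fact the construction the paper itself uses explicitly later, e.g.\ in the proof of Theorem~\ref{T:areadistortion}.

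The final version of your normalization fix is correct: writing the affine correction as $\psi(w)=\alpha w+\beta$ and interpolating via $\psi_\lambda(w)=e^{(\lambda/k)c}w+(\lambda/k)\beta$ with $e^{c}=\alpha$ gives a holomorphic family of invertible affine maps on $\DD$ joining the identity to $\psi$; then $f_\lambda:=\psi_\lambda^{-1}\circ g_\lambda$ is a holomorphic motion (check Definition~\ref{D:holomotion} directly, or note that $\mu_{f_\lambda}=\mu_{g_\lambda}=(\lambda/k)\mu_F$ and invoke Theorem~\ref{T:motions}) with $f_k=F$. Two expository quibbles worth tightening if you write this up. First, your linear interpolation $\phi_\lambda(z)=(1-\lambda/k)z+(\lambda/k)\phi(z)$ can genuinely fail: its leading coefficient $1+(\lambda/k)(a-1)$ vanishes at $\lambda=k/(1-a)$, which lies in $\DD$ whenever $|a-1|>k$, so you were right to abandon it. Second, ``conjugate by a fixed affine map so that $F$ fixes $0$ and $1$'' is not what you mean---conjugation $\alpha\circ F\circ\alpha^{-1}$ preserves the fixed-point structure of $F$ and cannot create two prescribed fixed points; what you actually do in the final argument (post-compose with an affine map) is the right move.
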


Quasiconformal maps exhibit numerous interesting properties.
An important one for us is  the fact that quasiconformal homeomorphisms of $\CC$ are quasisymmetric in the sense
described in the next theorem
\cite[Theorem~3.5.3]{AIM09}.

\begin{theorem}\label{T:quasisym}
Given $k\in[0,1)$, there exists an increasing homeomorphism
$\eta:[0,\infty)\to[0,\infty)$ such that every $k$-quasiconformal
map $f:\CC\to\CC$ satisfies
\begin{equation}\label{E:quasisym}
\frac{|f(z_0)-f(z_1)|}{|f(z_0)-f(z_2)|}\le
\eta\Bigl(\frac{|z_0-z_1|}{|z_0-z_2|}\Bigr)
\quad(z_0,z_1,z_2\in\CC).
\end{equation}
\end{theorem}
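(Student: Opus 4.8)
The plan is to derive the quasisymmetry estimate from the compactness of the family of normalized $k$-quasiconformal maps, together with the fact that such a family is a \emph{normal} family in the sense of Montel. First I would reduce to the case of a \emph{normalized} map. Given $z_0,z_1,z_2\in\CC$ with $z_0,z_1,z_2$ distinct, precompose $f$ with the affine map $z\mapsto z_0+(z_1-z_0)z$ and postcompose with an affine map sending $f(z_0)\mapsto 0$ and $f(z_1)\mapsto 1$; affine maps are conformal, so the resulting map $g$ is still $k$-quasiconformal, and it satisfies $g(0)=0$, $g(1)=1$. Writing $t:=|z_0-z_2|/|z_0-z_1|$ and $w:=(z_2-z_0)/(z_1-z_0)$ (so $|w|=t$), the quantity to be controlled becomes $|g(w)|$, and the claimed inequality \eqref{E:quasisym} is equivalent to an estimate of the form $|g(w)|\le \eta(1/t)$ depending only on $t=|w|$ and $k$. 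Thus it suffices to produce, for each $r>0$, a finite bound
\[
M(r):=\sup\bigl\{\,|g(w)| : g \text{ is $k$-qc on $\CC$},\ g(0)=0,\ g(1)=1,\ |w|\le r\,\bigr\}
\]
and a positive lower bound $m(r):=\inf\{|g(w)| : \dots,\ |w|\ge r\}$; combining these appropriately over the three points recovers a single increasing homeomorphism $\eta$. (Here one uses $|g(w)| = |g(w)-g(0)|$ and $|g(1)-g(0)|=1$, so ratios of the form appearing in \eqref{E:quasisym} are all comparable to values $|g(w)|$.)

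Next I would prove that $M(r)<\infty$ and $m(r)>0$ using normality. The family $\mathcal{F}_k$ of all $k$-quasiconformal self-maps of $\CC$ fixing $0$ and $1$ is precompact: by the measurable Riemann mapping theorem every such map is a homeomorphism of $\Coo$ fixing $0,1$, and by the standard distortion theory (e.g.\ \cite[Ch.~3]{AIM09}) this family, regarded as maps of $\Coo$, omits $\infty$ uniformly on compacts away from a neighborhood of the preimage of $\infty$; more directly, such a family is equicontinuous on compact subsets of $\CC$ and forms a normal family whose limits are again $k$-quasiconformal (or degenerate to constants), but the normalization at two points rules out the constant limits. If $M(r)=\infty$ for some $r$, take $g_n\in\mathcal{F}_k$ and $|w_n|\le r$ with $|g_n(w_n)|\to\infty$; by normality, after passing to a subsequence, $g_n\to g$ locally uniformly on $\CC$ with $g$ a homeomorphism, and $w_n\to w_\infty$ in the closed disk of radius $r$, so $g_n(w_n)\to g(w_\infty)$ is finite — a contradiction. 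The lower bound $m(r)>0$ follows by applying the same argument to the inverse maps $g_n^{-1}$, which also lie in $\mathcal{F}_k$ (with the same $k$), since $|g_n(w_n)|\to 0$ with $|w_n|\ge r$ would force $w_n = g_n^{-1}(g_n(w_n))\to 0$ after passing to a subsequence.

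Finally, I would assemble $\eta$. Define $\eta_0(t):=M(t)$ for $t\ge 0$; this is nondecreasing, finite, and satisfies $\eta_0(1)\ge 1$. Using $m$, one checks $M(t)\to 0$ as $t\to 0$ and $M(t)\to\infty$ as $t\to\infty$ (the latter because $g^{-1}$ would otherwise be bounded on all of $\CC$), so after replacing $\eta_0$ by a continuous increasing majorant — e.g.\ $\eta(t):=\sup_{0\le s\le t}\eta_0(s) + t$ — we obtain an increasing homeomorphism $\eta:[0,\infty)\to[0,\infty)$ for which \eqref{E:quasisym} holds for all $k$-quasiconformal $f$ and all triples $z_0,z_1,z_2$, via the affine reduction above. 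The main obstacle is establishing the normality/precompactness of $\mathcal{F}_k$ with the correct form of the limit maps (i.e.\ that normalized limits are non-constant and still $k$-quasiconformal), together with the absence of any a priori bound forcing one to track the behavior near $\infty$ on the sphere; this is precisely the content of the distortion theory for quasiconformal maps developed in \cite[Chapter~3]{AIM09}, and in a self-contained treatment one would invoke the uniform Hölder continuity of normalized $k$-quasiconformal maps to get equicontinuity directly.
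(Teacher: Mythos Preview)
The paper does not prove Theorem~\ref{T:quasisym} at all: it is quoted as a known result with the citation \cite[Theorem~3.5.3]{AIM09}, and is used only as a black box to derive Corollary~\ref{C:quasisym}. So there is no ``paper's own proof'' to compare your proposal against.

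That said, your sketch is along the right lines and is one of the standard routes to quasisymmetry: normalize by pre- and post-composing with affine maps so that two of the three points go to $0$ and $1$, and then use compactness of the family of $k$-quasiconformal self-maps of $\Coo$ fixing $0,1,\infty$ to get uniform upper and lower bounds on $|g(w)|$ over $|w|\le r$ and $|w|\ge r$. Two small points are worth tightening. First, with your normalization (sending $z_0\mapsto0$, $z_1\mapsto1$) the ratio in \eqref{E:quasisym} becomes $1/|g(w)|$, not $|g(w)|$, so what you actually need from your bounds is $|g(w)|\ge 1/\eta(1/|w|)$; this comes from your lower bound $m(r)$ rather than $M(r)$. (Equivalently, normalize with $z_2$ in place of $z_1$ and the upper bound $M$ does the job directly.) Second, the compactness input you invoke --- that normalized $k$-quasiconformal maps form a normal family with non-degenerate limits --- is itself nontrivial and is essentially equivalent in strength to the distortion estimates that give Theorem~\ref{T:quasisym}; so in a self-contained write-up you would want to be explicit that you are importing this from \cite[Chapter~3]{AIM09}, as you indicate at the end.
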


We shall exploit this result via the following
simple corollary.

\begin{corollary}\label{C:quasisym}
Given $k\in[0,1)$,
there exist constants $\delta,\delta'>0$
such that every $k$-quasiconformal homeomorphism
$f:\CC\to\CC$ has the following properties:
\begin{enumerate}[\normalfont(i)]
\item If $z_0\in\CC$ and $D$ is an open disk with centre $z_0$,
then $f(D)$ contains the open disk with centre $f(z_0)$
and radius $\delta \diam f(D)$.
\item If $z_0\in\CC$ and $Q$ is an open square with centre $z_0$,
then $f(Q)$ contains the open disk with centre $f(z_0)$
and radius $\delta' \diam f(Q)$.
\end{enumerate}
\end{corollary}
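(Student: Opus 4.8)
The plan is to deduce Corollary~\ref{C:quasisym} directly from the quasisymmetry estimate \eqref{E:quasisym} of Theorem~\ref{T:quasisym}. The key observation is that both parts are about the shape of the image of a round (or square) neighbourhood under $f$: we must show that $f(D)$ (resp.\ $f(Q)$) is not too ``eccentric'' about the image of the centre, in the sense that it contains a disk about $f(z_0)$ whose radius is a fixed fraction of $\diam f(D)$ (resp.\ $\diam f(Q)$), with the fraction depending only on $k$.

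First I would prove (i). Fix an open disk $D$ with centre $z_0$ and radius $r$, and let $w_0 := f(z_0)$. Set $\rho := \operatorname{dist}(w_0, \CC\setminus f(D))$, so that the open disk with centre $w_0$ and radius $\rho$ is the largest such disk contained in $f(D)$; the goal is a lower bound $\rho \ge \delta\,\diam f(D)$ with $\delta = \delta(k)$. There is a boundary point realizing the distance: choose $z_1 \in \CC$ with $|z_0 - z_1| = r$ (hence $z_1 \in \partial D$, so $f(z_1) \in \partial f(D)$) such that $|w_0 - f(z_1)| = \rho$; this is possible because $f$ is a homeomorphism of $\CC$, so it maps $\partial D$ onto $\partial f(D)$, and the infimum defining $\rho$ is attained on the compact set $f(\overline D)$. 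Now let $z_2 \in \CC$ be an arbitrary point with $|z_0 - z_2| = 2r$ (say); then $|z_0 - z_1|/|z_0 - z_2| = 1/2$, so \eqref{E:quasisym} gives $|w_0 - f(z_2)| \le \eta(1/2)\,|w_0 - f(z_1)| = \eta(1/2)\,\rho$. Applying this with $z_2$ ranging over the circle $\{|z - z_0| = 2r\}$, whose $f$-image is a curve surrounding $w_0$, we conclude that every point of $f(\{|z - z_0| \le 2r\})$ — and in particular every point of $f(\overline D)$, since $\overline D \subset \{|z-z_0|\le 2r\}$ — lies within distance $\eta(1/2)\,\rho$ of $w_0$. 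Hence $\diam f(D) \le 2\eta(1/2)\,\rho$, i.e.\ $\rho \ge \bigl(1/(2\eta(1/2))\bigr)\,\diam f(D)$, so (i) holds with $\delta := 1/(2\eta(1/2))$, which depends only on $k$ through $\eta$.

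For part (ii), the square $Q$ with centre $z_0$ and side $2r$ satisfies $D_{\mathrm{in}} \subset Q \subset D_{\mathrm{out}}$, where $D_{\mathrm{in}}$ is the disk of radius $r$ about $z_0$ and $D_{\mathrm{out}}$ the disk of radius $r\sqrt2$ about $z_0$. Then $f(D_{\mathrm{in}}) \subset f(Q)$, so applying part (i) to $D_{\mathrm{in}}$, the image $f(Q)$ contains the disk about $w_0$ of radius $\delta\,\diam f(D_{\mathrm{in}})$; it remains to bound $\diam f(D_{\mathrm{in}})$ below by a constant multiple of $\diam f(Q)$. Since $Q \subset D_{\mathrm{out}}$ we have $\diam f(Q) \le \diam f(D_{\mathrm{out}})$, and by the argument of part (i) applied to the pair $D_{\mathrm{in}} \subset D_{\mathrm{out}}$ (take $|z_0-z_1| = r$, $|z_0-z_2| = r\sqrt2$, ratio $1/\sqrt2$), every point of $f(D_{\mathrm{out}})$ lies within distance $\eta(1/\sqrt2)\cdot\operatorname{dist}(w_0,\CC\setminus f(D_{\mathrm{in}}))$ of $w_0$, whence $\diam f(D_{\mathrm{out}}) \le 2\eta(1/\sqrt2)\cdot\delta^{-1}\,\diam f(D_{\mathrm{in}})$, using part (i) once more. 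Chaining these inequalities yields $\diam f(D_{\mathrm{in}}) \ge c(k)\,\diam f(Q)$ for an explicit $c(k)>0$, and then (ii) holds with $\delta' := \delta\, c(k)$.

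The only mild subtlety — the part worth stating carefully rather than the ``hard part'', since the corollary is elementary — is the use of the global homeomorphism property: because $f:\CC\to\CC$ is a homeomorphism of the whole plane, it carries the Jordan curve $\partial D$ onto the Jordan curve $\partial f(D)$, so the complement of $f(D)$ genuinely touches $f(\partial D)$ and the distance $\rho$ is attained at an image $f(z_1)$ with $z_1 \in \partial D$; this is what licenses the comparison of $\rho$ with values $|w_0 - f(z_2)|$ via \eqref{E:quasisym}. One should also note that all constants produced depend only on $k$, since the homeomorphism $\eta$ in Theorem~\ref{T:quasisym} does.
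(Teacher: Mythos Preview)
Your overall strategy is sound, but there is a slip in applying \eqref{E:quasisym}. Having chosen $z_1\in\partial D$ with $|w_0-f(z_1)|=\rho$ and $z_2$ with $|z_0-z_2|=2r$, you compute $|z_0-z_1|/|z_0-z_2|=1/2$ and then write $|w_0-f(z_2)|\le\eta(1/2)\,\rho$. But \eqref{E:quasisym} with those labels says
\[
\frac{|w_0-f(z_1)|}{|w_0-f(z_2)|}\le\eta\Bigl(\tfrac12\Bigr),
\]
which is a \emph{lower} bound on $|w_0-f(z_2)|$, not an upper one. To get the bound you need, swap the roles of $z_1$ and $z_2$ in \eqref{E:quasisym}: then the ratio on the right becomes $|z_0-z_2|/|z_0-z_1|=2$, and you obtain $|w_0-f(z_2)|\le\eta(2)\,\rho$. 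The same correction applies in part~(ii), where $\eta(1/\sqrt2)$ should read $\eta(\sqrt2)$. Also, the factor $\delta^{-1}$ in the last displayed chain of part~(ii) is spurious: since $B(w_0,\rho_{\mathrm{in}})\subset f(D_{\mathrm{in}})$ one has trivially $\rho_{\mathrm{in}}\le\tfrac12\diam f(D_{\mathrm{in}})$, which is the inequality you actually need there.

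With those fixes your argument goes through, but the paper's proof is markedly shorter because it avoids the auxiliary circle of radius $2r$ and the inscribed/circumscribed disks altogether. One simply takes \emph{both} comparison points $z_1,z_2$ on $\partial D$ (respectively $\partial Q$): then $|z_0-z_1|/|z_0-z_2|\le1$ (respectively $\le\sqrt2$), so \eqref{E:quasisym} gives
\[
\min_{\zeta\in\partial D}|f(z_0)-f(\zeta)|\ \ge\ \frac{1}{\eta(1)}\max_{\zeta\in\partial D}|f(z_0)-f(\zeta)|\ \ge\ \frac{1}{2\eta(1)}\,\diam f(D),
\]
and likewise $\delta'=1/(2\eta(\sqrt2))$ for the square. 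Your route yields less tidy constants in exchange for no real gain; the paper's one-line comparison of boundary points is the cleaner way to package the same idea.
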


\begin{proof}
Let $\eta$ be the function associated to $k$
by Theorem~\ref{T:quasisym}.
In the case of the square, if $z_1,z_2\in\partial Q$,
then  $|z_0-z_1|/|z_0-z_2|\le\sqrt{2}$, so by \eqref{E:quasisym}
we have $|f(z_0)-f(z_1)|/|f(z_0)-f(z_2)|\le \eta(\sqrt{2})$.
It follows that (ii) holds with $\delta'=1/(2\eta(\sqrt{2}))$.
The proof of (i) is similar, now with $\delta=1/(2\eta(1))$.
\end{proof}


\section{Inf-harmonic functions}\label{S:infharmonic}

Recall from Definition~\ref{D:infharmonic}
that a function $u:D\to[0,\infty)$ defined on a plane domain $D$
is inf-harmonic if it is the lower envelope of a family of harmonic
functions. It is inherent in the definition that $u$ is positive, so the
harmonic functions are positive too.
This has the consequence that inf-harmonic functions inherit
several of the good properties of positive harmonic functions.

We begin by showing that inf-harmonic functions satisfy Harnack's inequality.
Recall that, given $\lambda_1,\lambda_2\in D$, there exists $\tau>0$ such that,
for all positive harmonic functions $h$ on $D$
\[
\frac{1}{\tau}
\le \frac{h(\lambda_1)}{h(\lambda_2)}
\le\tau.
\]
The smallest such $\tau$ is called the \emph{Harnack distance} between $\lambda_1,\lambda_2$,
denoted $\tau_D(\lambda_1,\lambda_2)$.
For example,
$\tau_\DD(0,\lambda)=(1+|\lambda|)/(1-|\lambda|)$ for  $\lambda\in \DD$.

\begin{proposition}\label{P:Harnackineq}
Let $u$ be an inf-harmonic function on a domain $D$, and suppose that $u\not\equiv0$.
Then $u(\lambda)>0$ for all $\lambda\in D$ and
\begin{equation}\label{E:Harnackineq}
\frac{1}{ \tau_D(\lambda_1,\lambda_2)}\le \frac{u(\lambda_1)}{u(\lambda_2)}\le \tau_D(\lambda_1,\lambda_2)
\quad(\lambda_1,\lambda_2\in D).
\end{equation}
\end{proposition}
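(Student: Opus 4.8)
The plan is to deduce everything from Harnack's inequality applied to each individual harmonic function in a family $\cH$ realizing $u$, being careful about the direction in which one may interchange an infimum with an inequality. Write $u=\inf_{h\in\cH}h$. Since $u\le h$ pointwise and $u\ge0$, each $h\in\cH$ is a nonnegative harmonic function on $D$; and since we are assuming $u\not\equiv0$, no $h\in\cH$ can be identically zero, for otherwise $u\equiv0$. By the minimum principle, a nonnegative harmonic function that is not identically zero is strictly positive throughout $D$, so every $h\in\cH$ is a positive harmonic function and the recalled Harnack inequality applies to it.

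Next I would fix $\lambda_1,\lambda_2\in D$ and set $\tau:=\tau_D(\lambda_1,\lambda_2)$. For every $h\in\cH$ we have $h(\lambda_1)\le \tau\,h(\lambda_2)$, and since $u(\lambda_1)\le h(\lambda_1)$ this gives $u(\lambda_1)\le \tau\,h(\lambda_2)$ for every $h\in\cH$. Taking the infimum of the right-hand side over $h\in\cH$ yields $u(\lambda_1)\le \tau\,u(\lambda_2)$. This is the key step: even though $u(\lambda_1)$ is itself an infimum, it is bounded above by $\tau\,h(\lambda_2)$ for \emph{every} $h$, hence by the infimum of those quantities. Exchanging the roles of $\lambda_1$ and $\lambda_2$ and using the symmetry $\tau_D(\lambda_1,\lambda_2)=\tau_D(\lambda_2,\lambda_1)$, one also obtains $u(\lambda_2)\le \tau\,u(\lambda_1)$.

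For the positivity assertion, choose $\lambda^*\in D$ with $u(\lambda^*)>0$, which is possible since $u\not\equiv0$. For an arbitrary $\lambda\in D$, the inequality just proved (with $\lambda_1=\lambda^*$, $\lambda_2=\lambda$) gives $u(\lambda^*)\le \tau_D(\lambda^*,\lambda)\,u(\lambda)$, so $u(\lambda)\ge u(\lambda^*)/\tau_D(\lambda^*,\lambda)>0$. Hence $u>0$ on all of $D$. With positivity in hand one may divide in the two multiplicative inequalities of the previous paragraph to conclude that $1/\tau_D(\lambda_1,\lambda_2)\le u(\lambda_1)/u(\lambda_2)\le \tau_D(\lambda_1,\lambda_2)$, which is \eqref{E:Harnackineq}.

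I do not expect a serious obstacle here. The only points that need care are the order of the argument — one must first establish the multiplicative inequalities and deduce positivity, and only \emph{then} divide — and the justification of passing the infimum through the inequality in the step ``$u(\lambda_1)\le \tau\,h(\lambda_2)$ for all $h$ implies $u(\lambda_1)\le\tau\,u(\lambda_2)$'', which works precisely because $h$ occurs only on the side not involving $u$.
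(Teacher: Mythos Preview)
Your proof is correct and follows essentially the same route as the paper's: apply Harnack's inequality to each $h\in\cH$, pass to the infimum to obtain the multiplicative inequalities for $u$, and then deduce strict positivity from $u\not\equiv0$. The paper is slightly terser---it takes the infimum directly across the two-sided inequality $(1/\tau)h(\lambda_2)\le h(\lambda_1)\le\tau h(\lambda_2)$---while you spell out the one-sided steps and the positivity of each $h$ more carefully, but there is no substantive difference.
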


\begin{proof}
For each positive harmonic function $h$ on $D$,
we have
\[
\frac{1}{\tau_D(\lambda_1,\lambda_2)}h(\lambda_2)\le h(\lambda_1)\le \tau_D(\lambda_1,\lambda_2) h(\lambda_2)
\quad(\lambda_1,\lambda_2\in D).
\]
Taking the infimum over all $h$ such that $h\ge u$, we obtain
\[
\frac{1}{\tau_D(\lambda_1,\lambda_2)}u(\lambda_2)\le u(\lambda_1)\le \tau_D(\lambda_1,\lambda_2) u(\lambda_2)
\quad(\lambda_1,\lambda_2\in D).
\]
Since $u\not\equiv0$, this shows that $u(\lambda)>0$
for all $\lambda\in D$,
and \eqref{E:Harnackineq} now follows immediately.
\end{proof}

\begin{corollary}\label{C:Harnackineq}
If $u$ is an inf-harmonic function on $D$,
then it is a continuous superharmonic function on $D$.
\end{corollary}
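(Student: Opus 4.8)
The plan is to dispose of the trivial case and then reduce everything to Proposition~\ref{P:Harnackineq}. If $u\equiv0$ there is nothing to prove, since the zero function is harmonic, hence continuous and superharmonic. So assume $u\not\equiv0$; by Proposition~\ref{P:Harnackineq} we then have $u>0$ everywhere on $D$ and the Harnack-type estimate \eqref{E:Harnackineq} at our disposal. The argument has two independent parts: showing that $u$ is continuous, and showing that $u$ satisfies the super-mean-value inequality. Once both are in hand, continuity together with the super-mean-value property forces $u$ to be superharmonic, by the classical characterization of superharmonicity for continuous functions.

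For continuity, the only real input needed beyond \eqref{E:Harnackineq} is that $\tau_D(\lambda_1,\lambda_2)\to1$ as $\lambda_1\to\lambda_2$. This is elementary: if $\overline{B(\lambda_2,\rho)}\subset D$, then every positive harmonic function on $D$ restricts to a positive harmonic function on $B(\lambda_2,\rho)$, so $\tau_D(\lambda_1,\lambda_2)\le\tau_{B(\lambda_2,\rho)}(\lambda_1,\lambda_2)$; and the classical Harnack inequality on a disk gives $\tau_{B(\lambda_2,\rho)}(\lambda_1,\lambda_2)=(\rho+|\lambda_1-\lambda_2|)/(\rho-|\lambda_1-\lambda_2|)$ for $|\lambda_1-\lambda_2|<\rho$, which tends to $1$ (this is the rescaled version of the formula $\tau_\DD(0,\lambda)=(1+|\lambda|)/(1-|\lambda|)$ recorded above). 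Since also $\tau_D\ge1$ always, we get $\tau_D(\lambda_1,\lambda_2)\to1$. Feeding this into \eqref{E:Harnackineq}, namely $1/\tau_D(\lambda_1,\lambda_2)\le u(\lambda_1)/u(\lambda_2)\le\tau_D(\lambda_1,\lambda_2)$, and using that $u(\lambda_2)$ is a fixed positive real number, we conclude $u(\lambda_1)\to u(\lambda_2)$, i.e.\ $u$ is continuous on $D$.

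For the super-mean-value inequality, fix $\lambda\in D$ and $r>0$ with $\overline{B(\lambda,r)}\subset D$, and write $u=\inf_{h\in\cH}h$ with each $h$ harmonic (hence, as already noted, positive) on $D$. For every $h\in\cH$ we have $h\ge u$ on $D$, so the mean-value equality for $h$ gives
\[
h(\lambda)=\frac{1}{2\pi}\int_0^{2\pi}h(\lambda+re^{i\theta})\,d\theta\ge\frac{1}{2\pi}\int_0^{2\pi}u(\lambda+re^{i\theta})\,d\theta,
\]
the integral on the right being well defined because $u$ is continuous. Taking the infimum over $h\in\cH$ yields $u(\lambda)\ge\frac{1}{2\pi}\int_0^{2\pi}u(\lambda+re^{i\theta})\,d\theta$. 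Thus $u$ is a continuous function on $D$ obeying the super-mean-value inequality over all sufficiently small circles, hence superharmonic, which completes the proof.

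The step I expect to require the most care is the passage from ``infimum of harmonic functions'' to ``continuous'': as written, $u$ is only manifestly \emph{upper} semicontinuous, and the point is that Harnack's inequality \eqref{E:Harnackineq} is exactly what supplies the missing lower semicontinuity (in fact full continuity). Everything else — the super-mean-value computation and the appeal to the classical characterization of superharmonicity for continuous functions — is routine.
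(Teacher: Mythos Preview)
Your proof is correct and follows essentially the same approach as the paper's: continuity from the Harnack estimate \eqref{E:Harnackineq} (the paper phrases this as ``$\tau_D$ is continuous on $D\times D$'', which with $\tau_D(\lambda,\lambda)=1$ amounts to your $\tau_D(\lambda_1,\lambda_2)\to1$), and superharmonicity from the super-mean-value inequality inherited from the majorizing harmonic functions. You have simply filled in the details that the paper's two-sentence proof leaves implicit, including the trivial case $u\equiv0$.
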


\begin{proof}
The continuity of $u$ follows from Proposition~\ref{P:Harnackineq},
since $\tau_D$ is continuous on $D\times D$.
As $u$ is the infimum of harmonic functions,
it clearly satisfies the super-mean value property, so it is superharmonic on $D$.
\end{proof}

The next result is a normal-family property.

\begin{proposition}\label{P:normalfamily}
Let $(D_n)_{n\ge1}$ be an increasing sequence of domains,
and let $D:=\cup_{n\ge1}D_n$.
For each $n$, let $u_n$ be an inf-harmonic function on  $D_n$.
Then either $u_n\to\infty$ locally uniformly on $D$,
or else some subsequence $u_{n_j}\to u$ locally uniformly on $D$,
where $u$ is inf-harmonic on $D$.
\end{proposition}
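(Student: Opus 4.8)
The plan is to reduce Proposition~\ref{P:normalfamily} to the classical theory of positive harmonic functions (Harnack's theorem) by exploiting the representation of each $u_n$ as a lower envelope. The key preliminary observation is that each $u_n$ is either identically zero or strictly positive on $D_n$ (by Proposition~\ref{P:Harnackineq}), and in either case satisfies Harnack's inequality \eqref{E:Harnackineq} \emph{with the Harnack distance of $D_n$}. Since $D_n\nearrow D$ and Harnack distance is monotone in the domain, for any compact $L\subset D$ we have $L\subset D_n$ for $n$ large, and $\tau_{D_n}(\lambda_1,\lambda_2)\to\tau_D(\lambda_1,\lambda_2)$ locally uniformly; in particular the $\tau_{D_n}$ restricted to $L\times L$ are bounded, uniformly in $n$ large. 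This gives the equicontinuity/uniform-boundedness dichotomy: fix a basepoint $\lambda_0\in D_1$; either $u_n(\lambda_0)\to\infty$, or a subsequence has $u_{n_j}(\lambda_0)$ bounded (and, if some $u_n\equiv 0$, we treat that separately — but note a subsequence of zeros converges to the inf-harmonic function $0$, so that case is harmless).

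First I would handle the trivial strand: if infinitely many $u_n$ are $\equiv 0$, pass to that subsequence and conclude with $u\equiv 0$. Otherwise, discard finitely many terms so that every $u_n$ is strictly positive and, enlarging the index, so that $L\subset D_n$ with $\tau_{D_n}\le 2\tau_D$ on $L\times L$ for each fixed compact $L$ (a diagonal argument over an exhaustion $L_1\subset L_2\subset\cdots$ of $D$ makes this precise). Now apply Harnack's inequality: if $u_n(\lambda_0)\to\infty$ then on each compact $L$ containing $\lambda_0$ we get $u_n\ge u_n(\lambda_0)/\tau_{D_n}(\lambda_0,\cdot)\to\infty$ uniformly on $L$, giving the first alternative. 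Otherwise, along a subsequence $u_{n_j}(\lambda_0)$ is bounded, hence by Harnack $u_{n_j}$ is locally uniformly bounded on $D$; moreover the same two-sided Harnack bounds force local equicontinuity (each $u_{n_j}$ is superharmonic, hence, being squeezed between comparable positive harmonic functions via the super-mean-value property, one gets an oscillation estimate — alternatively one applies the standard fact that a family of positive superharmonic functions, uniformly bounded on compacts and satisfying a uniform Harnack-type lower bound, is equicontinuous). By Arzelà--Ascoli, extract a further subsequence converging locally uniformly on $D$ to some continuous $u\ge 0$.

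The remaining and genuinely substantive step is to show the limit $u$ is \emph{inf-harmonic} on all of $D$ — not merely superharmonic. This is where I expect the main obstacle. The natural strategy: for each $n$, write $u_n=\inf_{h\in\cH_n}h$ with $\cH_n$ a family of positive harmonic functions on $D_n$. One wants to produce, for each $\lambda^*\in D$ and each $\varepsilon>0$, a \emph{globally} defined positive harmonic function $H$ on $D$ with $H\ge u$ on $D$ and $H(\lambda^*)\le u(\lambda^*)+\varepsilon$. The idea is to pick $h_j\in\cH_{n_j}$ with $h_j(\lambda^*)$ close to $u_{n_j}(\lambda^*)$; since $h_j\ge u_{n_j}\ge$ (eventually) something comparable to $u$, and $h_j(\lambda^*)$ stays bounded, Harnack's theorem for positive harmonic functions on the fixed domain $D$ (applied to $h_j|_{D_m}$ for each fixed $m$, with a diagonal argument) yields a subsequence of the $h_j$ converging locally uniformly on $D$ to a positive harmonic function $H$ on $D$. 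Passing to the limit in $h_j\ge u_{n_j}$ gives $H\ge u$, and in $h_j(\lambda^*)\le u_{n_j}(\lambda^*)+\varepsilon$ gives $H(\lambda^*)\le u(\lambda^*)+\varepsilon$. Taking the infimum over all such $H$ (one for each $\lambda^*$ and $\varepsilon$) exhibits $u$ as a lower envelope of positive harmonic functions on $D$, i.e. inf-harmonic. The delicate points to get right are the bookkeeping of nested subsequences (one for the $u_n$, then one for the $h_j$ depending on the chosen point $\lambda^*$ — but these can be taken along the already-fixed subsequence $n_j$, so no conflict arises), and the uniform lower comparability $u_{n_j}\gtrsim u$ on compacts needed to keep the $h_j$ from escaping to $0$ or $\infty$, which again comes from the uniform Harnack bounds established in the second paragraph.
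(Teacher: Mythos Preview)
Your proposal is correct and follows essentially the same route as the paper: use Harnack's inequality (Proposition~\ref{P:Harnackineq}) on the increasing domains to get the dichotomy at a basepoint and equicontinuity, apply Arzel\`a--Ascoli to extract a locally uniform limit $u$, and then---for each point $\lambda^*$---choose harmonic dominators $h_j\ge u_{n_j}$ with $h_j(\lambda^*)$ nearly optimal and pass to a normal-family limit to obtain a harmonic $H$ on $D$ with $H\ge u$ and $H(\lambda^*)=u(\lambda^*)$. The paper's write-up is terser (it does not separate out the $u_n\equiv 0$ case or dwell on the monotonicity of $\tau_{D_n}$, and it chooses $h_n(\lambda_0)\le u(\lambda_0)+1/n$ directly rather than $h_j(\lambda^*)$ close to $u_{n_j}(\lambda^*)$), but the substance is the same.
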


\begin{proof}
If there exists a point $\lambda_0\in D$ such that $u_n(\lambda_0)\to\infty$,
then by Proposition~\ref{P:Harnackineq}
the sequence $u_n\to\infty$ locally uniformly in each $D_m$
and hence also on $D$.
Likewise if $u_n(\lambda_0)\to0$,
then $u_n\to0$ locally uniformly in $D$.
Thus, replacing $(u_n)$ by a subsequence if necessary,
we may assume that
there exists $\lambda_0\in D_1$ and $M>1$ such that
$1/M\le u_n(\lambda_0)\le M$ for all $n$.
In this case, by Proposition~\ref{P:Harnackineq} once more,
the sequence $(u_n)$ is equicontinuous on each $D_m$,
and by the Arzel\`a--Ascoli theorem,
a subsequence $(u_{n_j})$ converges locally uniformly on $D$
to a finite-valued function $u$.

It remains to show that $u$ is itself inf-harmonic on $D$.
Relabelling, if necessary,
we can suppose that the whole sequence $u_n$ converges to $u$
locally uniformly on $D$. Let $\lambda_0\in D$.
Choose $n_0$ so that $\lambda_0\in D_{n_0}$.
For each $n\ge n_0$, the function $u_n$ is inf-harmonic on $D_n$,
so there exists a (positive) harmonic function $h_n$ on $D_n$
such that $h_n\ge u_n$ on $D_n$
and $h_n(\lambda_0)\le u(\lambda_0)+1/n$.
By a standard normal-family argument, a subsequence
$(h_{n_j})$ converges locally uniformly on $D$ to a function
$h$ that is harmonic on $D$.
Clearly $h\ge u$ on $D$ and $h(\lambda_0)=u(\lambda_0)$.
Such an $h$ exists for each choice
of $\lambda_0\in D$, so
we conclude that $u$ is indeed inf-harmonic on $D$.
\end{proof}

The following result lists some closure properties
of the family of inf-harmonic functions.

\begin{proposition}\label{P:closure}
\begin{enumerate}[\normalfont\rm(i)]
\item\label{I:linear} If $u$ and $v$ are inf-harmonic on $D$ and if $\alpha,\beta\ge0$,
then $\alpha u+\beta v$ is inf-harmonic on $D$.
\item\label{I:diff} If $u$ is inf-harmonic on $D$ and $h$ is harmonic on $D$,
and if $u\ge h$, then $u-h$ is inf-harmonic on $D$.
\item\label{I:limit} If $(u_n)_{n\ge1}$ are inf-harmonic functions on $D$,
and if $u_n\to u$ pointwise on $D$,
then either $u$ is inf-harmonic on $D$ or $u\equiv\infty$.
\item\label{I:semilocal} If $(D_n)$ is an increasing sequence of domains
with $\cup_{n\ge1}D_n=D$,
and if $u$ is a function on $D$ such that
$u|_{D_n}$ is inf-harmonic on $D_n$ for each $n$,
then $u$ is inf-harmonic on $D$.
\item\label{I:inf} If $\cV$ is a family of inf-harmonic functions on $D$
and $u:=\inf_{v\in\cV}v$,
then $u$ is inf-harmonic on $D$.
\item\label{I:sup} If $\cV$ is an upward-directed family of inf-harmonic functions on $D$
(i.e., given $v_1,v_2\in\cV$, there exists $v_3\in\cV$
with $v_3\ge \max\{v_1,v_2\}$), and if $u:=\sup_{v\in\cV}v$, then
either $u$ is inf-harmonic on $D$ or $u\equiv\infty$.
\end{enumerate}
\end{proposition}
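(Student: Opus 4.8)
The plan is to treat each of the six items in turn, relying throughout on the characterization of inf-harmonic functions as lower envelopes of families of positive harmonic functions together with the standard normal-family machinery for harmonic functions. Parts \eqref{I:linear} and \eqref{I:inf} are essentially immediate: if $u=\inf_{h\in\cH}h$ and $v=\inf_{g\in\cG}g$, then $\alpha u+\beta v=\inf\{\alpha h+\beta g: h\in\cH,\ g\in\cG\}$ since $\alpha,\beta\ge0$, and each $\alpha h+\beta g$ is harmonic; for \eqref{I:inf} one simply takes the union of all the defining families. Part \eqref{I:diff} is also quick: if $u=\inf_{h\in\cH}h$ and $h_0$ is harmonic with $u\ge h_0$, then $u-h_0=\inf_{h\in\cH}(h-h_0)$, and each $h-h_0\ge u-h_0\ge0$ is a positive harmonic function, so $u-h_0$ is inf-harmonic.

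For \eqref{I:limit} I would apply Proposition~\ref{P:normalfamily} with the constant sequence $D_n=D$: if $u_n\to u$ pointwise, then in particular $u_n\to u$ pointwise, so either $u\equiv\infty$ (the case $u_n\to\infty$), or a subsequence converges locally uniformly to an inf-harmonic limit, which must agree with the pointwise limit $u$; one has to note that the ``$u_n\to0$'' branch of that proof also lands in the inf-harmonic case since $0$ is inf-harmonic. Part \eqref{I:semilocal} is the one place where a little genuine work is needed, and I expect it to be the main obstacle: the hypothesis only gives inf-harmonicity on each $D_n$, and one must manufacture, for each fixed $\lambda_0\in D$, a single positive harmonic function $h$ on all of $D$ with $h\ge u$ on $D$ and $h(\lambda_0)=u(\lambda_0)$. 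The natural approach mirrors the last paragraph of the proof of Proposition~\ref{P:normalfamily}: pick $n_0$ with $\lambda_0\in D_{n_0}$; for $n\ge n_0$ choose a positive harmonic $h_n$ on $D_n$ with $h_n\ge u|_{D_n}$ and $h_n(\lambda_0)\le u(\lambda_0)+1/n$; by Harnack (applied on each $D_m$, using that $h_n(\lambda_0)$ is bounded) the family $(h_n)_{n\ge n_0}$ is locally bounded on $D$, hence a subsequence converges locally uniformly to a harmonic $h$ on $D$ with $h\ge u$ and $h(\lambda_0)=u(\lambda_0)$. Since $\lambda_0$ was arbitrary, $u=\inf h$ over all such $h$, so $u$ is inf-harmonic on $D$. (Strictly, one should first dispose of the degenerate possibility that some $u|_{D_n}\equiv0$ while $u\not\equiv0$, which cannot happen by Harnack, and the case $u\equiv0$, which is trivially inf-harmonic.)

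Finally, for \eqref{I:sup}, the upward-directed hypothesis makes $\cV$ a net, so $u=\sup_{v\in\cV}v$ is attained as an increasing limit along a suitable countable cofinal subfamily $v_1\le v_2\le\cdots$ after passing to local sup's on an exhaustion of $D$ by, say, disks; concretely, exhaust $D$ by relatively compact subdomains and, on each, extract an increasing sequence from $\cV$ whose pointwise sup equals $u$ there, then diagonalize. By \eqref{I:limit} the limit of this increasing sequence is either inf-harmonic or $\equiv\infty$ on each subdomain, and by \eqref{I:semilocal} this patches together to give that $u$ is either inf-harmonic on $D$ or $\equiv\infty$. The directedness is exactly what is needed to ensure that the sup over the countable subfamily recaptures the sup over all of $\cV$. $\qed$
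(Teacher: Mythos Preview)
Your treatment of (i)--(v) is correct and matches the paper. For (iv) the paper is terser: it simply invokes Proposition~\ref{P:normalfamily} with $u_n:=u|_{D_n}$ (on each fixed $D_m$ this sequence is eventually constant, so it trivially converges to $u$), rather than reproving the harmonic-majorant construction you sketch; both arguments work.

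For (vi) your plan is in the right spirit, but the step ``on each relatively compact subdomain, extract an increasing sequence from $\cV$ whose pointwise sup equals $u$ there'' is asserted rather than proved, and once you justify it you will essentially have reproduced the paper's argument anyway. The paper works globally on $D$ and avoids both the exhaustion and the appeal to (iv): assuming $u\not\equiv\infty$, Harnack (Proposition~\ref{P:Harnackineq}) shows $u$ is finite and continuous; one then fixes a countable dense sequence $(\lambda_j)$ in $D$ and uses upward-directedness to build an increasing sequence $v_n\in\cV$ with $v_n(\lambda_j)\ge u(\lambda_j)-1/n$ for all $j\le n$. The pointwise limit $v$ satisfies $v\le u$ everywhere and $v=u$ on the dense set; by (iii) $v$ is inf-harmonic, hence continuous, so $v=u$ on all of $D$. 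This is exactly what your ``countable cofinal subfamily'' construction would have to do, but made explicit.
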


\begin{proof}
\ref{I:linear},\ref{I:diff} These are both obvious.

\ref{I:limit} Assume that $u\not\equiv\infty$.
Then, by Proposition~\ref{P:normalfamily},
a subsequence of the $(u_n)$ converges locally uniformly on $D$
to an inf-harmonic function $v$.
Since the same subsequence converges pointwise to $u$,
we must have $v=u$. Hence $u$ is inf-harmonic.

\ref{I:semilocal} This follows by applying Proposition~\ref{P:normalfamily}
with $u_n:=u|_{D_n}$.

\ref{I:inf} Again, this is obvious.

\ref{I:sup} Assume that $u\not\equiv\infty$.
Then, by Proposition~\ref{P:Harnackineq},
$u$ is finite-valued and continuous on $D$.
Let $\Lambda=(\lambda_j)$ be a sequence that is dense in $D$.
Using the fact that $\cV$ is upward-directed,
we may construct an increasing sequence of functions $v_n\in\cV$ such that
$v_n(\lambda_j)\ge u(\lambda_j)-1/n$ for all $j\in\{1,2,\dots,n\}$ and all $n\ge1$.
Then $v_n$ converges pointwise to a function $v$ such that
$v\le u$ and $v=u$ on $\Lambda$.
By part~\ref{I:limit} above, $v$ is inf-harmonic on $D$.
As $v=u$ on the dense subset $\Lambda$ and both
$u,v$ are continuous, we have $v=u$ on $D$.
Thus $u$ is inf-harmonic on $D$, as asserted.
\end{proof}

We conclude this section with an implicit function theorem
for inf-harm\-onic functions.

\begin{theorem}\label{T:implicit}
Let $D$ be a plane domain,
and let $a_j:D\to(0,1)$ be
a finite or infinite sequence of functions such that
$\log (1/a_j)$ is inf-harmonic on $D$ for each $j$.
Let $c>0$, and define $s:D\to[0,\infty]$ by
\[
s(\lambda):=\inf\Bigl\{ \alpha> 0 :\sum_j a_j(\lambda)^\alpha\le c\Bigr\}
\quad(\lambda\in D),
\]
where we interpret $\inf\emptyset=\infty$.
Then either $s\equiv0$ or $1/s$ is an inf-harmonic function on $D$.
\end{theorem}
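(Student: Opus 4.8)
The plan is to prove the statement first for a \emph{finite} sequence $a_1,\dots,a_n$, and then to deduce the general case from it by a monotone limit argument. Throughout, write $g_j:=\log(1/a_j)$, so the $g_j$ are strictly positive inf-harmonic functions and $a_j=e^{-g_j}$.

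Assume first that there are only $n$ terms, and for $\lambda\in D$ put $\psi_\lambda(\alpha):=\sum_{j=1}^{n}e^{-\alpha g_j(\lambda)}$. For fixed $\lambda$ this is continuous and strictly decreasing in $\alpha>0$, with $\psi_\lambda(0^{+})=n$ and $\psi_\lambda(+\infty)=0$. If $c\ge n$, then $\psi_\lambda(\alpha)<n\le c$ for every $\alpha>0$, so $s\equiv0$ and there is nothing to prove. Otherwise $0<c<n$, and for each $\lambda$ there is a \emph{unique} $s(\lambda)\in(0,\infty)$ with $\psi_\lambda(s(\lambda))=c$, which agrees with the definition of $s(\lambda)$. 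The heart of the argument will be the variational formula
\[
\frac{1}{s(\lambda)}=\inf_{w}\ \frac{\sum_{j=1}^{n}w_j\,g_j(\lambda)}{\log(1/c)+H(w)}\qquad(\lambda\in D),
\]
where the infimum is over all probability vectors $w=(w_1,\dots,w_n)$ with $\log(1/c)+H(w)>0$ and $H(w):=\sum_j w_j\log(1/w_j)\ge0$. (This family is non-empty, since the uniform vector gives $\log(1/c)+\log n=\log(n/c)>0$.) To prove ``$\le$'' one fixes such a $w$, sets $\beta:=(\log(1/c)+H(w))/\sum_j w_j g_j(\lambda)$, and applies the weighted arithmetic--geometric mean inequality to the numbers $e^{-\beta g_j(\lambda)}/w_j$ with weights $w_j$, obtaining $\psi_\lambda(\beta)\ge e^{H(w)-\beta\sum_j w_j g_j(\lambda)}=e^{\log c}=c$; since $\psi_\lambda$ is decreasing with $\psi_\lambda(s(\lambda))=c$, this forces $\beta\le s(\lambda)$. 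For the reverse inequality one checks that the ``Gibbs weights'' $w^{*}_j:=e^{-s(\lambda)g_j(\lambda)}/c$ (a probability vector, as $\sum_j e^{-s(\lambda)g_j(\lambda)}=c$) achieve equality: a short computation gives $\log(1/c)+H(w^{*})=s(\lambda)\sum_j w^{*}_j g_j(\lambda)$.

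Granting the formula, the finite case is immediate. For each admissible $w$ the function $v_w:=\sum_{j=1}^{n}\frac{w_j}{\log(1/c)+H(w)}\,g_j$ is a finite non-negative linear combination of the inf-harmonic functions $g_j$, hence inf-harmonic by Proposition~\ref{P:closure}\ref{I:linear}; and $1/s=\inf_w v_w$ is then inf-harmonic by Proposition~\ref{P:closure}\ref{I:inf}. For the general case, set $s_n(\lambda):=\inf\{\alpha>0:\sum_{j=1}^{n}a_j(\lambda)^\alpha\le c\}$. Since the partial sums increase with $n$, one has $s_n\le s_{n+1}\le s$, and in fact $s_n\uparrow s$ pointwise: any $\alpha>\sup_n s_n(\lambda)$ satisfies $\sum_{j=1}^{n}a_j(\lambda)^\alpha\le c$ for every $n$, hence $\sum_{j\ge1}a_j(\lambda)^\alpha\le c$, so $\alpha\ge s(\lambda)$. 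By the finite case, $1/s_n$ is inf-harmonic for every $n>c$, and $1/s_n\downarrow 1/s$ pointwise (with the convention $1/0=\infty$); Proposition~\ref{P:closure}\ref{I:limit} applied to this tail sequence then shows that $1/s$ is either inf-harmonic or identically $\infty$, the latter meaning $s\equiv0$.

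The only genuine obstacle is discovering the variational formula; once it is in hand, the inf-harmonicity of $1/s$ is nothing more than the closure of the class of inf-harmonic functions under non-negative linear combinations and infima. The point to keep in mind is that bounding $s(\lambda)$ from \emph{below} requires a \emph{lower} bound for $\sum_j a_j(\lambda)^\alpha$, and this is exactly what the weighted AM--GM inequality supplies, with equality governed by the Gibbs-type weights $w^{*}_j\propto a_j(\lambda)^{s(\lambda)}$.
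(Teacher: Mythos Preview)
Your proof is correct. Both you and the paper ultimately express $1/s$ as an infimum of non-negative linear combinations of the functions $g_j=\log(1/a_j)$, but you arrive there by a different route.

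The paper proves a more abstract lemma (Lemma~\ref{L:implicit}): for any inf-cone $\cU$ and any sequence of continuous decreasing convex functions $\phi_j$, the function $v(x)=\sup\{t>0:\sum_j\phi_j(u_j(x)/t)\le1\}$ lies in $\cU$ (or is $\equiv\infty$). The key step is to write each $\phi_j$ as a supremum of affine minorants $L(y)=b_L-a_Ly$, which converts the nonlinear constraint into a family of linear ones and exhibits $v$ directly as an infimum of positive combinations of the $u_j$; the infinite sum is handled within the same computation by truncation. Theorem~\ref{T:implicit} then follows by taking $\phi_j(y)=(1/c)e^{-y}$.

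Your argument is more concrete and more closely tied to the exponential: you use the Gibbs variational principle (equivalently, weighted AM--GM with equality at the Gibbs weights $w_j^*\propto a_j^{s(\lambda)}$) to obtain the explicit formula $1/s=\inf_w\bigl(\sum_jw_jg_j\bigr)/\bigl(\log(1/c)+H(w)\bigr)$ in the finite case, and then pass to the infinite case by the monotone limit $s_n\uparrow s$ together with Proposition~\ref{P:closure}\ref{I:limit}. This is perfectly valid; note incidentally that in the genuinely infinite case the alternative $1/s\equiv\infty$ cannot occur, since $1/s_n$ is decreasing and finite once $n>c$. What the paper's approach buys is a reusable abstract lemma that applies to other convex $\phi_j$ and avoids the separate limit step; what your approach buys is a transparent, self-contained computation that makes the role of entropy explicit and needs no general convex duality.
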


It is perhaps worth emphasizing the case where there are
only finitely many functions $a_j$. It then becomes a result
closely linked to the notion of similarity dimension defined
in \S\ref{S:dimension}.
It generalizes  a result of Baribeau and Roy \cite[Theorem 1]{BR06}.

\begin{corollary}\label{C:implicit}
Let $a_1,\dots,a_n:D\to(0,1)$ be functions such that
$\log (1/a_j)$ is inf-harmonic on $D$ for each $j$.
Let $c\in(0,n)$ and, for each $\lambda\in D$, let $s(\lambda)$
be the unique solution of the equation
\[
\sum_{j=1}^na_j(\lambda)^{s(\lambda)}=c.
\]
Then  $1/s$ is an inf-harmonic function on $D$.
\end{corollary}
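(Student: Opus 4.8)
The plan is to deduce this directly from Theorem~\ref{T:implicit}. The only real content is to check that the function $s(\lambda)$ defined pointwise as the solution of $\sum_{j=1}^n a_j(\lambda)^{s(\lambda)}=c$ coincides with the function $s$ appearing in Theorem~\ref{T:implicit}, and that the alternative ``$s\equiv0$'' is excluded here so that the conclusion reads simply ``$1/s$ is inf-harmonic''. First I would fix $\lambda\in D$ and consider the function $\alpha\mapsto g(\alpha):=\sum_{j=1}^n a_j(\lambda)^\alpha$ on $[0,\infty)$. Since each $a_j(\lambda)\in(0,1)$, this is a sum of finitely many strictly decreasing continuous functions, hence itself strictly decreasing and continuous, with $g(0)=n$ and $g(\alpha)\to0$ as $\alpha\to\infty$. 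Because $c\in(0,n)$, the intermediate value theorem gives a unique $s(\lambda)\in(0,\infty)$ with $g(s(\lambda))=c$, so the definition in the statement makes sense. Moreover, by strict monotonicity, $g(\alpha)\le c$ if and only if $\alpha\ge s(\lambda)$; hence
\[
\inf\Bigl\{\alpha>0:\sum_{j=1}^n a_j(\lambda)^\alpha\le c\Bigr\}=s(\lambda),
\]
which is exactly the function produced by Theorem~\ref{T:implicit} (with the same constant $c$ and the same family $(a_j)$, which is finite and therefore certainly satisfies the hypotheses).

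It remains to rule out the possibility $s\equiv0$ offered by Theorem~\ref{T:implicit}. But we have just shown $s(\lambda)>0$ for every $\lambda\in D$, so that alternative does not occur, and Theorem~\ref{T:implicit} yields that $1/s$ is inf-harmonic on $D$, as claimed. I do not expect any genuine obstacle here: the argument is entirely a matter of unwinding definitions and invoking the already-proved Theorem~\ref{T:implicit}. The only point requiring a line of care is the equivalence ``$g(\alpha)\le c\iff\alpha\ge s(\lambda)$'', which uses that $g$ is \emph{strictly} decreasing (as opposed to merely non-increasing); this is immediate from $0<a_j(\lambda)<1$. Thus the corollary follows at once.
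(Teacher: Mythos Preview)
Your proposal is correct and follows exactly the intended route: the paper does not give a separate proof of this corollary, treating it as an immediate specialization of Theorem~\ref{T:implicit}. Your argument simply fills in the routine verification that the unique root of $\sum_j a_j(\lambda)^\alpha=c$ coincides with the infimum in Theorem~\ref{T:implicit} and that the degenerate alternative $s\equiv0$ is excluded, which is precisely what is needed.
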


We shall deduce Theorem~\ref{T:implicit}
from a more general abstract result.
To formulate this result, it is convenient to introduce
some terminology.

Let $X$ be a  set and let $\cU$ be a  family
of functions $u:X\to[0,\infty)$.
We call $\cU$ an \emph{inf-cone} on $X$
if it satisfies the following closure properties:
\begin{itemize}
\item if $u,v\in\cU$ and $\alpha,\beta\ge0$, then $\alpha u+\beta v\in\cU$;
\item if $\emptyset\ne\cV\subset\cU$ and $u:=\inf_{v\in\cV}v$, then $u\in\cU$.
\end{itemize}
By Proposition~\ref{P:closure} parts~\ref{I:linear} and~\ref{I:inf},
the set of inf-harmonic functions on a domain $D$ is an inf-cone on $D$.

The following result may be viewed as an abstract implicit function theorem for inf-cones.

\begin{lemma}\label{L:implicit}
Let $\cU$ be an inf-cone on $X$, let $(u_j)_{j\ge1}$ be a sequence in $\cU$,
and for each $j$ let $\phi_j:[0,\infty)\to[0,\infty)$ be a continuous, decreasing, convex function.
Define $v:X\to[0,\infty]$ by
\[
v(x):=\sup\Bigl\{t>0: \sum_{j\ge1}\phi_j(u_j(x)/t)\le1\Bigr\}
\quad(x\in X),
\]
where we interpret $\sup\emptyset=0$.
Then $v\in\cU$ or $v\equiv\infty$.
\end{lemma}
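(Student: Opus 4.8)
The plan is to show directly that $v$ is the infimum of a suitable family of members of $\cU$, thereby verifying inf-harmonicity abstractly. The key observation is that, because each $\phi_j$ is continuous and decreasing, the condition $\sum_j \phi_j(u_j(x)/t) \le 1$ defining the set whose supremum is $v(x)$ is, for fixed $x$, equivalent to $t \le v(x)$ (with a mild caveat at the boundary value). The real work is to turn the convexity of the $\phi_j$ into a family of \emph{linear} (affine-in-$t$) lower bounds, because an infimum of affine functions of the $u_j$'s — each of which lies in $\cU$ — will land in $\cU$ by the two defining closure properties of an inf-cone.

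Here is how I would carry it out. First I would reduce to the case where all the relevant quantities are finite: if $v(x_0)=\infty$ for some $x_0$ one argues (via the monotonicity built into the definition, or by noting $\phi_j \ge \phi_j(\infty) =: \lim$) that $v \equiv \infty$, which is the excluded alternative; otherwise $v$ is everywhere finite. Next, for each $j$ and each slope parameter, I would use convexity of $\phi_j$ to write, for any $t_0 > 0$ and the point $u_j(x)/t$, the supporting-line inequality
\[
\phi_j(u_j(x)/t) \ge \phi_j(s_0) + \phi_j'(s_0)\bigl(u_j(x)/t - s_0\bigr)
\]
for an arbitrary base point $s_0 > 0$ (using a one-sided derivative if necessary, which exists by convexity; $\phi_j'(s_0) \le 0$ since $\phi_j$ is decreasing). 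Summing over $j$, any choice of base points $(s_j)$ with $\sum_j \phi_j(s_j) \le 1$ yields a constraint of the form $\sum_j |\phi_j'(s_j)| u_j(x)/t \le C$ for an explicit constant $C = C((s_j))$, whence $t \le \frac{1}{C}\sum_j |\phi_j'(s_j)|\, u_j(x)$ whenever $t$ satisfies the original summability constraint. Taking the supremum over admissible $t$ gives $v(x) \le w_{(s_j)}(x) := \frac{1}{C}\sum_j |\phi_j'(s_j)|\, u_j(x)$, and each such $w_{(s_j)}$ is a nonnegative linear combination of the $u_j \in \cU$, hence in $\cU$ (allowing infinite sums by first truncating to finitely many terms and using the infimum-closure to pass to the limit, or more directly since an infimum over all truncations is again in $\cU$).

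Finally I would show $v = \inf_{(s_j)} w_{(s_j)}$, where the infimum is over all sequences $(s_j)$ of positive base points with $\sum_j \phi_j(s_j) \le 1$; combined with the previous paragraph this gives $v \le \inf w_{(s_j)}$, so it remains to show the reverse, i.e.\ that for each $x$ the bound is attained (or approached) by a good choice of base points. The natural choice is $s_j := u_j(x)/v(x)$: by continuity and the defining property of $v(x)$ as a supremum one checks $\sum_j \phi_j(u_j(x)/v(x)) \le 1$, so this $(s_j)$ is admissible, and the supporting line at each $s_j$ then passes through the actual point $u_j(x)/v(x)$, making all the inequalities above equalities at $x$, which forces $w_{(s_j)}(x) = v(x)$. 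Hence $v$ is an infimum of members of $\cU$ and therefore $v \in \cU$. The main obstacle I anticipate is the careful bookkeeping at the extreme cases — handling $v(x) = 0$ (where $\sup\emptyset = 0$ must be reconciled with the formula, typically because the constraint set is empty or the supporting-line argument degenerates), handling $j$ with $u_j(x) = 0$ or $\phi_j$ eventually constant (so $\phi_j' = 0$ there and that term contributes nothing), and justifying the interchange of supremum over $t$ with the infinite sum; none of these is deep, but each needs a clean argument so that the affine lower bounds genuinely sandwich $v$.
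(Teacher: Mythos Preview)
Your idea --- linearize via supporting lines of the convex $\phi_j$ and express $v$ as an infimum of nonnegative combinations of the $u_j$ --- is exactly the paper's approach, but the execution has two genuine problems beyond bookkeeping.

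First, the algebra and the admissibility condition are garbled. Summing the supporting-line inequalities actually gives
\[
\sum_j|\phi_j'(s_j)|\,\frac{u_j(x)}{t}\ \ge\ \sum_j\bigl(\phi_j(s_j)+|\phi_j'(s_j)|\,s_j\bigr)-1=:C,
\]
so the constraint is $\ge C$, not $\le C$, and it yields a useful bound $t\le w_{(s_j)}(x)$ only when $C>0$; your stated condition $\sum_j\phi_j(s_j)\le1$ is neither necessary nor sufficient for that. More seriously, with the natural choice $s_j=u_j(x)/v(x)$ one gets $w_{(s_j)}(x)=v(x)$ only if $\sum_j\phi_j(u_j(x)/v(x))=1$ \emph{exactly}; with strict inequality you get $w_{(s_j)}(x)>v(x)$, and equality can genuinely fail because the series $t\mapsto\sum_j\phi_j(u_j(x)/t)$ need only be left-continuous.

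Second, your $w_{(s_j)}$ is an \emph{infinite} positive combination of the $u_j$, whereas an inf-cone is only assumed closed under finite combinations and infima. Your proposed fix (``truncate and take infima'') goes the wrong way: the finite truncations \emph{increase} to $w_{(s_j)}$, so their infimum is the first term, not the sum.

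The paper's proof sidesteps both issues simultaneously. It uses the full family of affine minorants $L(y)=b_L-a_Ly\le\phi_j$ with $a_L\ge0$ (not just tangent lines), and --- crucially --- exploits $\phi_j\ge0$ to replace $\sum_{j\ge1}\phi_j(u_j/t)\le1$ by $\sum_{j=1}^n\phi_j(u_j/t)\le1$ for every $n$. This yields the direct equivalence
\[
\sum_{j\ge1}\phi_j\Bigl(\frac{u_j(x)}{t}\Bigr)\le1
\ \iff\
t\Bigl(\sum_{j=1}^n b_{L_j}-1\Bigr)\le\sum_{j=1}^n a_{L_j}u_j(x)
\quad\text{for all }n\text{ and all }L_1,\dots,L_n,
\]
so $v(x)$ is literally the infimum of the \emph{finite} nonnegative combinations $\bigl(\sum a_{L_j}u_j\bigr)\big/\bigl(\sum b_{L_j}-1\bigr)$ over those choices with $\sum b_{L_j}>1$. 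No reverse-inequality argument is needed, no infinite sums arise, and the dichotomy $v\in\cU$ versus $v\equiv\infty$ drops out from whether any such choice exists.
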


\begin{proof}
For each $j$, let $\cL_j$ be the family of functions of the form $L(y):=b_L-a_Ly$,
such that $a_L\ge0,b_L\in\RR$ and $L\le\phi_j$.
As $\phi_j$ is a continuous decreasing convex function, we have $\phi_j=\sup_{L\in\cL_j}L$.
Consequently, if $x\in X$ and $t>0$, then
\begin{align*}
&\sum_{j\ge1}\phi_j(u_j(x)/t)\le1\\
&\iff \sum_{j=1}^n\phi_j(u_j(x)/t)\le 1\quad(n\ge1)\\
&\iff \sum_{j=1}^n (b_{L_j}-a_{L_j}u_j(x)/t)\le 1 \quad(n\ge1,  ~L_1\in\cL_1,~\dots,~L_n\in\cL_n)\\
&\iff t\Bigl(\sum_{j=1}^n b_{L_j}-1\Bigr)\le \sum_{j=1}^n a_{L_j}u_j(x)\quad(n\ge1,  ~L_1\in\cL_1,~\dots,~L_n\in\cL_n).
\end{align*}
There are now two possibilities. If $\sum_{j=1}^n b_{L_j}\le1$ for all  $n$
and all choices of $(L_1,\dots,L_n)\in\cL_1\times\dots\times\cL_n$, then
the above conditions are satisfied for all $t>0$ and all $x\in X$. In this case $v\equiv\infty$.
In the other case, we have
\[
v(x)=\inf\Biggl\{\frac{\sum_{j=1}^n a_{L_j}u_j(x)}{\sum_{j=1}^n b_{L_j}-1}\Biggr\}
\quad(x\in X),
\]
where the infimum is taken over all $n\ge1$ and all $(L_1,\dots,L_n)\in\cL_1\times\cdots\times\cL_n$
such that  $\sum_{j=1}^n b_{L_j}>1$.
Hence $v\in\cU$ in this case.
\end{proof}

\begin{proof}[Proof of Theorem~\ref{T:implicit}]
This result  follows from Lemma~\ref{L:implicit}
upon taking $\cU$ to be the set of inf-harmonic functions on $D$,
and $\psi_j(y):=(1/c)\exp(-y)$ for each $j$.
\end{proof}


\section{Proof of Theorem~\ref{T:Minkowski}}\label{S:Minkowski}

We have $A_\lambda=f_\lambda(A)=f(\lambda,A)$,
where $f:\DD\times A\to\CC$ is a holomorphic motion.
By Theorem~\ref{T:extended},
we may extend $f$ to a holomorphic motion $f:\DD\times\CC\to\CC$.
We shall assume that $f$ has been so extended.
Since $A$ is bounded and $f$ is continuous,
it follows that $A_\lambda$ is bounded for all $\lambda\in\DD$.

The following lemma establishes the link with inf-harmonic functions.
We recall that $D(a,r)$ denotes the open disk with centre $a$ and radius $r$, and that $\diam(S)$ denotes the euclidean diameter of $S$.

\begin{lemma}\label{L:diameter}
Let $f:\DD\times\CC\to\CC$ be a holomorphic motion.
Let $B$ be a bounded subset of $\CC$ and let $\rho\in(0,1)$.
Then  $M:=\diam f({D}(0,\rho)\times B)<\infty$.
If $S$ is a subset of $B$,
then the map $\lambda\mapsto\log(M/\diam f_\lambda(S))$ is an
inf-harmonic function on $D(0,\rho)$. Consequently, we have
\begin{equation}\label{E:diameter}
\frac{\rho-|\lambda|}{\rho+|\lambda|}
\le\frac{\log(M/\diam f_\lambda(S))}{\log(M/\diam S)}\le
\frac{\rho+|\lambda|}{\rho-|\lambda|}
\qquad(\lambda\in D(0,\rho)).
\end{equation}
\end{lemma}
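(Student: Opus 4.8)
The plan is to write $\lambda\mapsto\log(M/\diam f_\lambda(S))$ directly as the lower envelope of a family of \emph{positive} harmonic functions on $D(0,\rho)$, and then to read off \eqref{E:diameter} from Harnack's inequality for inf-harmonic functions (Proposition~\ref{P:Harnackineq}). First, for the finiteness of $M$: since $f$ is a holomorphic motion of all of $\CC$, Theorem~\ref{T:extended} shows it is jointly continuous on $\DD\times\CC$; as $\overline{D(0,\rho)}\times\overline{B}$ is a compact subset of $\DD\times\CC$, its image under $f$ is compact, hence bounded, so $M\le\diam f\bigl(\overline{D(0,\rho)}\times\overline{B}\bigr)<\infty$.

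For the inf-harmonicity, I would argue as follows. We may assume $S$ has at least two points, since otherwise $\diam f_\lambda(S)\equiv0$ and there is nothing to prove. For distinct $z,w\in S$ put $g_{z,w}(\lambda):=f_\lambda(z)-f_\lambda(w)$. By part (i) of Definition~\ref{D:holomotion} this is holomorphic on $\DD$, and by part (ii) (injectivity of each $f_\lambda$) it has no zeros on $\DD$, hence none on $D(0,\rho)$; therefore $-\log|g_{z,w}|$ is harmonic on $D(0,\rho)$. For $\lambda\in D(0,\rho)$ the points $f_\lambda(z),f_\lambda(w)$ both lie in $f(D(0,\rho)\times B)$, so $|g_{z,w}(\lambda)|\le M$, and hence $h_{z,w}:=\log M-\log|g_{z,w}|$ is a \emph{non-negative} harmonic function on $D(0,\rho)$. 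Since $\diam f_\lambda(S)=\sup_{z\ne w}|g_{z,w}(\lambda)|$, taking reciprocals and logarithms gives
\[
\log\frac{M}{\diam f_\lambda(S)}=\inf_{\substack{z,w\in S\\ z\ne w}}h_{z,w}(\lambda)\qquad(\lambda\in D(0,\rho)),
\]
so the left-hand side is the infimum of a family of positive harmonic functions, i.e.\ inf-harmonic on $D(0,\rho)$ by Definition~\ref{D:infharmonic}.

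For the estimate \eqref{E:diameter}, write $u(\lambda):=\log(M/\diam f_\lambda(S))$; since $f_0=\mathrm{id}$ we have $u(0)=\log(M/\diam S)$. If $u\equiv0$ the inequality holds trivially; otherwise Proposition~\ref{P:Harnackineq} gives $1/\tau_{D(0,\rho)}(0,\lambda)\le u(\lambda)/u(0)\le\tau_{D(0,\rho)}(0,\lambda)$ for $\lambda\in D(0,\rho)$. By conformal invariance of the Harnack distance together with $\tau_\DD(0,\mu)=(1+|\mu|)/(1-|\mu|)$, the scaling $z\mapsto z/\rho$ yields $\tau_{D(0,\rho)}(0,\lambda)=(\rho+|\lambda|)/(\rho-|\lambda|)$, and substituting this gives exactly \eqref{E:diameter}.

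I do not expect a genuine obstacle here; the only point that truly needs attention is that $M$ is defined precisely so that each $h_{z,w}$ is non-negative — this is what promotes ``infimum of harmonic functions'' to ``inf-harmonic'' — together with the observation that $\lambda\mapsto f_\lambda(z)-f_\lambda(w)$ is zero-free because each $f_\lambda$ is injective, which is what makes $-\log|g_{z,w}|$ harmonic rather than merely subharmonic.
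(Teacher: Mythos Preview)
Your proof is correct and follows essentially the same route as the paper's: write $\log(M/\diam f_\lambda(S))$ as the infimum over pairs $z\ne w$ in $S$ of the positive harmonic functions $\log(M/|f_\lambda(z)-f_\lambda(w)|)$, then apply Harnack's inequality (Proposition~\ref{P:Harnackineq}). You spell out a few details the paper leaves implicit---zero-freeness of $f_\lambda(z)-f_\lambda(w)$ from injectivity, and the scaling computation of $\tau_{D(0,\rho)}(0,\lambda)$---but the argument is the same.
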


\begin{proof}
As $f:\DD\times\CC\to\CC$ is a continuous map
and $\overline{D}(0,\rho)\times \overline{B}$ is a
compact subset of $\DD\times\CC$, it follows that
$f(\overline{D}(0,\rho)\times \overline{B})$ is a compact subset
of $\CC$. In particular it has finite diameter, so $M<\infty$.

Given $S\subset B$,  we have
\[
\log\Bigl(\frac{M}{\diam f_\lambda(S)}\Bigr)=
\inf\Bigl\{\log\Bigl(\frac{M}{|f_\lambda(z)-f_\lambda(w)|}\Bigr):z,w\in S,\, z\ne w\Bigr\}.
\]
Fo each pair $z,w\in S$ with $z\ne w$, the function
$\lambda\mapsto\log(M/|f_\lambda(z)-f_\lambda(w)|)$
is positive and harmonic on $D(0,\rho)$.
Therefore $\lambda\mapsto\log(M/\diam f_\lambda(S))$
is inf-harmonic on $D(0,\rho)$.

Finally, the inequality \eqref{E:diameter} is a direct
consequence of Harnack's inequality for inf-harmonic functions,
Proposition~\ref{P:Harnackineq}.
\end{proof}

The next lemma contains the heart of the proof of
Theorem~\ref{T:Minkowski}.
We recall that the upper Minkowski dimension
$\overline{\dim}_M$
can be characterized using Proposition~\ref{P:Tricot}.

\begin{lemma}\label{L:heartM}
If $\overline{\dim}_M(A)>0$, then there exists an inf-harmonic function
$u$ on~$\DD$ such that
\[
u(0)=1/\overline{\dim}_M(A)
\qquad\text{and}\qquad
u(\lambda)\ge1/\overline{\dim}_M(A_\lambda)\quad(\lambda\in\DD).
\]
\end{lemma}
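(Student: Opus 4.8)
\textbf{Proof plan for Lemma~\ref{L:heartM}.}
The plan is to use the pre-packing characterization of $\overline{\dim}_M$ from Proposition~\ref{P:Tricot}, which says that $\overline{\dim}_M(A_\lambda)$ is the critical exponent where $\cP_0^s(A_\lambda)$ jumps from $\infty$ to $0$. First I would fix $\rho\in(0,1)$ and work on $D(0,\rho)$, later gluing the pieces together using Proposition~\ref{P:closure}\ref{I:semilocal}. Fix a large ball $B$ containing $A$, so that $M:=\diam f(\overline D(0,\rho)\times B)<\infty$ as in Lemma~\ref{L:diameter}. For each $s>0$ and each $\delta>0$, I would look at finite families $\{D_j\}$ of disjoint disks with centres in $A$ and diameters $\le\delta$, and track the quantity $\sum_j (\diam f_\lambda(D_j))^s$. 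The crucial point is Corollary~\ref{C:quasisym}(i): because $f_\lambda$ is quasiconformal, the images $f_\lambda(D_j)$ each contain a disk about $f_\lambda(z_j)$ of radius $\delta\,\diam f_\lambda(D_j)$ for a fixed constant $\delta>0$ independent of $\lambda$ (at least for $\lambda$ in a compact subdisk, where the dilatation is bounded by $\rho$), so a bounded-overlap / Besicovitch-type argument converts a disjoint-disk packing of $A$ into an essentially disjoint packing of $A_\lambda$, up to a universal multiplicative constant. Thus $\cP_0^s(A_\lambda)$ and $\sup_{\{D_j\}}\sum_j(\diam f_\lambda(D_j))^s$ are comparable (finite/infinite together).

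The next step is to encode ``$\sum_j(\diam f_\lambda(D_j))^s$ stays bounded'' as an inf-harmonic condition in a form to which Lemma~\ref{L:implicit} (or directly Theorem~\ref{T:implicit}) applies. For a fixed finite disjoint family $\mathcal F=\{D_1,\dots,D_n\}$ with centres in $A$, set $a_j^{\mathcal F}(\lambda):=\diam f_\lambda(D_j)/M\in(0,1)$; by Lemma~\ref{L:diameter}, $\log(1/a_j^{\mathcal F})=\log(M/\diam f_\lambda(D_j))$ is inf-harmonic on $D(0,\rho)$. Applying Theorem~\ref{T:implicit} to this finite collection (with an appropriate constant $c$ absorbing $M^s$ and the overlap constant), the function
\[
s_{\mathcal F}(\lambda):=\inf\Bigl\{\alpha>0:\sum_{j=1}^n a_j^{\mathcal F}(\lambda)^\alpha\le c\Bigr\}
\]
has the property that $1/s_{\mathcal F}$ is inf-harmonic on $D(0,\rho)$ (or $s_{\mathcal F}\equiv0$). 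Now $\overline{\dim}_M(A_\lambda)$, via Proposition~\ref{P:Tricot}, should be recoverable as $\sup_{\mathcal F}$ (over all finite disjoint families with centres in $A$, of all diameters) of $s_{\mathcal F}(\lambda)$, up to the harmless comparability constants; dually, $1/\overline{\dim}_M(A_\lambda)$ is an infimum of the functions $1/s_{\mathcal F}$. One then has to check that this family $\{1/s_{\mathcal F}\}$ is upward-directed in the sense of Proposition~\ref{P:closure}\ref{I:sup} — combining two finite disjoint families into one (possibly after a small rescaling of diameters to keep disjointness) produces a third family dominating both, because enlarging $\mathcal F$ only increases $s_{\mathcal F}$. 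Hence $\lambda\mapsto1/\overline{\dim}_M(A_\lambda)$ restricted to $D(0,\rho)$ is inf-harmonic (using Proposition~\ref{P:closure}\ref{I:inf} for the inf over a possibly large index set, or the sup-part for $s$ itself).

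Having produced, for each $\rho<1$, an inf-harmonic function on $D(0,\rho)$ equal to $1/\overline{\dim}_M(A_\lambda)$ there (the case $\overline{\dim}_M(A)>0$ guaranteeing we are in the non-trivial branch), Proposition~\ref{P:closure}\ref{I:semilocal} glues these into an inf-harmonic function $u$ on all of $\DD$ with $u(\lambda)=1/\overline{\dim}_M(A_\lambda)$; in particular $u(0)=1/\overline{\dim}_M(A)$ since $f_0=\mathrm{id}$, and the inequality $u(\lambda)\ge 1/\overline{\dim}_M(A_\lambda)$ in the statement is in fact an equality. (If one is only willing to claim an inequality because the overlap constants degrade the identity, one still gets $u(\lambda)\ge1/\overline{\dim}_M(A_\lambda)$ with equality at $\lambda=0$, which is all the lemma asks.) The step I expect to be the main obstacle is the rigorous bookkeeping in making $\overline{\dim}_M(A_\lambda)$ genuinely equal to the supremum over finite disjoint disk-families of the similarity-type exponents $s_{\mathcal F}(\lambda)$, \emph{uniformly} in $\lambda$: one must be careful that the quasisymmetry constant from Corollary~\ref{C:quasisym}(i) is uniform over $\lambda\in D(0,\rho)$ (which it is, since $\|\mu_{f_\lambda}\|_\infty\le\rho<1$ by Theorem~\ref{T:motions}), that the Besicovitch overlap constant is absolute, and that passing from disjoint disks with centres in $A$ to a cover of $A_\lambda$ does not lose the critical exponent. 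Once that comparability is nailed down, the inf-harmonicity is a formal consequence of Theorem~\ref{T:implicit} and the closure properties in Proposition~\ref{P:closure}.
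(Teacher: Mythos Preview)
Your proposal assembles the right ingredients (Proposition~\ref{P:Tricot}, Lemma~\ref{L:diameter}, Theorem~\ref{T:implicit}, Corollary~\ref{C:quasisym}, and the reduction to $D(0,\rho)$), but the way you combine them has a genuine gap. The problem is with the claim that, for a \emph{fixed} constant $c$, one has $\overline{\dim}_M(A_\lambda)=\sup_{\mathcal F}s_{\mathcal F}(\lambda)$ (or even just $\ge$). This fails already at $\lambda=0$: take two disjoint disks with centres in $A$ and diameters close to $M$; then $\sum_j(\diam D_j/M)^\alpha$ stays near $2$ for a wide range of $\alpha$, and if $1<c<2$ the resulting $s_{\mathcal F}(0)$ can be made arbitrarily large, regardless of $\overline{\dim}_M(A)$. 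More generally, a single family $\mathcal F$ carries information at only one scale, and Proposition~\ref{P:Tricot} characterizes $\overline{\dim}_M$ through the behaviour of packings as the diameters tend to $0$ \emph{and} the sums tend to $\infty$; neither feature is captured by a fixed $c$ and a supremum over all families. (Your directedness remark is also misplaced: you are taking an infimum of the $1/s_{\mathcal F}$, which is covered by Proposition~\ref{P:closure}\ref{I:inf} without any directedness hypothesis.)

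The paper repairs this by abandoning the inf/sup over all $\mathcal F$ and instead choosing a \emph{sequence} of families $\cD_n$ with $\max_{D\in\cD_n}\diam D\to0$ and $\sum_{D\in\cD_n}(\diam D)^{d_n}\to\infty$, where $d_n\uparrow\overline{\dim}_M(A)$. Crucially, the constant on the right of the implicit equation is allowed to depend on $n$ (it is set equal to $\sum_{D\in\cD_n}(\diam D/M)^{d_n}$), which forces $s_n(0)=d_n$ exactly. One then extracts a subsequential limit $u$ of the $1/s_n$ via the normal-family result Proposition~\ref{P:normalfamily}; this is what replaces your inf-over-families step. The inequality $u(\lambda)\ge1/\overline{\dim}_M(A_\lambda)$ uses Corollary~\ref{C:quasisym}(i) just as you outline, but it needs both limits in \eqref{E:limits} (diameters $\to0$ and sums $\to\infty$) to conclude via Proposition~\ref{P:Tricot} that $\overline{\dim}_M(A_\lambda)\ge c$ for every $c<1/u(\lambda)$. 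No Besicovitch or bounded-overlap argument is needed: the images $f_\lambda(D)$ are already disjoint, hence so are the inscribed disks.
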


\begin{proof}
Let $\rho\in(0,1)$. We shall carry out the proof on the disk
$D(0,\rho)$, and then let $\rho\to1$ at the very end.

Let $(d_n)$ be a sequence  such that $0<d_n<\overline{\dim}_M(A)$
and $d_n\to\overline{\dim}_M(A)$.
By Proposition~\ref{P:Tricot}, for each $n$ there exists
a finite set $\cD_n$ of disjoint disks with centres in $A$
such that, as $n\to\infty$,
\begin{equation}\label{E:limits}
\max_{D\in\cD_n}\diam(D)\to0
\quad\text{and}\quad
\sum_{D\in\cD_n}\diam(D)^{d_n}\to\infty.
\end{equation}

Let $B$ be the union of all the disks in $\cup_{n\ge1}\cD_n$.
This is a bounded set, so, by Lemma~\ref{L:diameter},
$M:=\diam f(D(0,\rho)\times B)<\infty$, and
$\lambda\mapsto\log(M/\diam f_\lambda(D))$
is inf-harmonic on $D(0,\rho)$ for each $D\in\cup_n\cD_n$.

For each $\lambda\in D(0,\rho)$, let $s_n(\lambda)$
be the unique solution of the equation
\[
\sum_{D\in\cD_n}(\diam f_\lambda(D)/M)^{s_n(\lambda)}=
\sum_{D\in\cD_n}(\diam D/M)^{d_n}.
\]
Clearly $s_n(0)=d_n$.
Also, by the implicit function theorem,
Corollary~\ref{C:implicit},
the function $1/s_n$ is inf-harmonic
on $D(0,\rho)$.
By Proposition~\ref{P:normalfamily}, a subsequence of $1/s_n$
(which, by relabelling, we may suppose to be the whole sequence)
converges locally uniformly  to an inf-harmonic function $u$
on $D(0,\rho)$. Clearly
we have $u(0)=\lim_n(1/d_n)=1/\overline{\dim}_M(A)$.
We shall show that $u(\lambda)\ge 1/\overline{\dim}_M(A_\lambda)$
for all $\lambda\in D(0,\rho)$.

Fix $\lambda\in D(0,\rho)$,
and let $c\in(0,1/u(\lambda))$.
Then $s_n(\lambda)>c$ for all large enough $n$,
and so, for these $n$, we have
\begin{align*}
\sum_{D\in\cD_n}(\diam f_\lambda(D)/M)^c
&\ge\sum_{D\in\cD_n}(\diam f_\lambda(D)/M)^{s_n(\lambda)}\\
&=\sum_{D\in\cD_n}(\diam D/M)^{d_n},
\end{align*}
whence
\[
\sum_{D\in\cD_n}(\diam f_\lambda(D))^c
\ge M^{c-d_n}\sum_{D\in\cD_n}(\diam D)^{d_n}.
\]
For a given value of $n$, the sets $\{f_\lambda(D):D\in\cD_n\}$
are disjoint, but they are not disks. However, we can circumvent
this difficulty by invoking the theory of quasiconformal mappings.
By Theorem~\ref{T:motions},
the map $f_\lambda$ is a
$\rho$-quasiconformal self-homeomorphism of $\CC$.
Consequently, by Corollary~\ref{C:quasisym}(i),
there exists a $\delta>0$ such that,  for each $w\in\CC$
and each open disk $D$ with centre $w$, the set $f_\lambda(D)$
contains the open disk with centre $f_\lambda(w)$ and
radius $\delta \diam f_\lambda(D)$.
In particular, for each $D\in\cD_n$, the set $f_\lambda(D)$
contains a disk with centre in $f_\lambda(A)$ and diameter
at least $\delta\diam f_\lambda(D)$. Denoting by $\cD_n'$
the set of such disks,
we obtain a finite set of disjoint disks $D'$ with centres in $A_\lambda$
and such that
\[
\sum_{D'\in\cD_n'}(\diam D')^c
\ge \sum_{D\in\cD_n}(\delta\diam f_\lambda(D))^c
\ge \delta^cM^{c-d_n}\sum_{D\in\cD_n}(\diam D)^{d_n}.
\]
From \eqref{E:limits},
we have $\sum_{D\in\cD_n}(\diam D)^{d_n}\to\infty$,
whence it follows that
\begin{equation}\label{E:sumlim}
\sum_{D'\in\cD_n'}(\diam D')^c\to\infty \quad(n\to\infty).
\end{equation}
Also from \eqref{E:limits},
we have $\max_{D\in\cD_n}\diam(D)\to0$,
which, together with the inequality \eqref{E:diameter}, implies that
\begin{equation}\label{E:maxlim}
\max_{D'\in\cD_n'}\diam(D')\to0 \quad(n\to\infty).
\end{equation}
Taken together, the limits \eqref{E:sumlim} and \eqref{E:maxlim} show that
$\overline{\dim}_M(A_\lambda)\ge c$.
As this holds for each $c\in(0,1/u(\lambda))$, we deduce that
$\overline{\dim}_M(A_\lambda)\ge 1/u(\lambda)$,
in other words, that $u(\lambda)\ge 1/\overline{\dim}_M(A_\lambda)$,
as desired.

The proof of the lemma is nearly complete,
save for the fact that $u$ is defined only on $D(0,\rho)$,
not on $\DD$. To fix this, let us choose an increasing sequence
$(\rho_m)$ in $(0,1)$ such that $\rho_m\to1$.
For each $m$, the argument above furnishes an
inf-harmonic function $u_m$ defined on $D(0,\rho_m)$
such that $u_m(0)=1/\overline{\dim}_M(A)$
and $u_m(\lambda)\ge 1/\overline{\dim}_M(A_\lambda)$ for all $\lambda\in D(0,\rho_m)$.
By Proposition~\ref{P:normalfamily}, a subsequence of $(u_m)$
converges locally uniformly to an inf-harmonic function $u$ on $\DD$.
Clearly we have $u(0)=1/\overline{\dim}_M(A)$
and $u(\lambda)\ge 1/\overline{\dim}_M(A_\lambda)$
for all $\lambda\in \DD$.
The proof  is now complete.
\end{proof}

From here, it is a small step to establish  the main result.

\begin{proof}[Proof of Theorem~\ref{T:Minkowski}]
It is enough to show that, for each $\lambda_0\in\DD$
such that $\overline{\dim}_M(A_{\lambda_0})>0$, there exists
an inf-harmonic function $u$ on $\DD$ such that
\begin{equation}\label{E:uineq}
u(\lambda_0)=1/\overline{\dim}_M(A_{\lambda_0})
\qquad\text{and}\qquad
u(\lambda)\ge1/\overline{\dim}_M(A_\lambda)\quad(\lambda\in\DD).
\end{equation}
The special case $\lambda_0=0$ has already been proved in Lemma~\ref{L:heartM}.
The general case can be deduced from this as follows.

Fix a M\"obius automorphism $\phi$ of $\DD$
such that $\phi(0)=\lambda_0$.
Then $\tilde{f}_\lambda:=f_{\phi(\lambda)}\circ f_{\lambda_0}^{-1}$
is a holomorphic motion mapping $\DD\times\CC$ into $\CC$.
Also $\tilde{A}:=A_{\lambda_0}$ is a bounded subset of $\CC$, such that
$\tilde{f}_\lambda(\tilde{A})=A_{\phi(\lambda)}$ for all $\lambda\in\DD$.
Thus, applying Lemma~\ref{L:heartM} with $A,f$ replaced by the
pair $\tilde{A},\tilde{f}$, we deduce that there exists an
inf-harmonic function $v$ on $\DD$ such that
\[
v(0)=1/\overline{\dim}_M(A_{\phi(0)})
\qquad\text{and}\qquad
v(\lambda)\ge 1/\overline{\dim}_M(A_{\phi(\lambda)})\quad(\lambda\in\DD).
\]
Then $u:=v\circ\phi^{-1}$ is an inf-harmonic function on $\DD$
satisfying \eqref{E:uineq}. This completes the proof of the theorem.
\end{proof}

\section{Proof of Theorem~\ref{T:packing}}\label{S:packing}

As in the previous section, we may suppose that
$A_\lambda=f_\lambda(A)=f(\lambda, A)$,
where $f:\DD\times\CC\to\CC$
is a  holomorphic motion.

We shall deduce Theorem~\ref{T:packing}
from Theorem~\ref{T:Minkowski},
using the characterization of packing dimension
in terms of Minkowski dimension given in
Proposition~\ref{P:Minkowskipacking}.
From that result, we have
\[
\dim_P(A)=\inf\Bigl\{\sup_{j\ge1}\overline{\dim}_M(A_j):A=\cup_{j\ge1}A_j\Bigr\},
\]
where the infimum is taken over all countable covers of
$A$ by bounded subsets~$(A_j)$. Since $f_\lambda$
is a bijection of $A$ onto $A_\lambda$, it follows that,
\[
\dim_P(A_\lambda)=\inf\Bigl\{\sup_{j\ge1}\overline{\dim}_M(f_\lambda(A_j)):A=\cup_{j\ge1}A_j\Bigr\}
\quad(\lambda\in\DD),
\]
and hence
\begin{equation}\label{E:infsup}
\frac{1}{\dim_P(A_\lambda)}
=\sup\Bigl\{\inf_{j\ge1}\frac{1}{\overline{\dim}_M(f_\lambda(A_j))}:A=\cup_{j\ge1}A_j\Bigr\}
\quad(\lambda\in\DD).
\end{equation}

Let $A=\cup_{j\ge1}A_j$ be a countable cover of $A$
by bounded subsets of $A$.
By Theorem~\ref{T:Minkowski},
for each $j$, either $\overline{\dim}_M(f_\lambda(A_j))\equiv0$
or $\lambda\mapsto1/\overline{\dim}_M(f_\lambda(A_j))$
is an inf-harmonic function on $\DD$. It follows that
either $\overline{\dim}_M(f_\lambda(A_j))\equiv0$ for all $j\ge1$
or else $\lambda\mapsto\inf_{j\ge1}1/\overline{\dim}_M(f_\lambda(A_j))$
is an inf-harmonic function on $\DD$.
In the first case, \eqref{E:infsup} implies that
$\dim_P(A_\lambda)\equiv0$. In the second case,
the relation \eqref{E:infsup} expresses $1/\dim_P(A_\lambda)$
as the supremum of a family of inf-harmonic functions.

Ordinarily, the supremum of a family of inf-harmonic functions
is no longer inf-harmonic.
However, this particular family is an upward-directed set,
in the sense of Proposition~\ref{P:closure}\,\ref{I:sup}.
Indeed, given any two countable covers
$A=\cup_iA_i=\cup_jB_j$ of $A$
by bounded sets, there is a third such cover,
namely $A=\cup_{i,j}(A_i\cap B_j)$, with the property that
\[
\sup_{i,j}\overline{\dim}_M(A_i\cap B_j)\le
\min\Bigl\{\sup_i\overline{\dim}_M(A_i),\,\sup_j\overline{\dim}_M(B_j)\Bigr\},
\]
which implies upward-directedness in \eqref{E:infsup}.
By Proposition~\ref{P:closure}\,\ref{I:sup},
it follows that either $\dim_P(A_\lambda)\equiv0$
or  $\lambda\mapsto1/\dim_P(A_\lambda)$
is inf-harmonic on $\DD$.
This completes the proof of Theorem~\ref{T:packing}.\qed


\section{Proof of Theorem~\ref{T:Hausdorff}}\label{S:Hausdorff}

The proof of Theorem~\ref{T:Hausdorff} follows a similar pattern to that of Theorem~\ref{T:Minkowski}, presented in \S\ref{S:Minkowski},
except that, because Hausdorff dimension is defined in terms of coverings rather than packings, some of the inequalities go in the
other direction. Unfortunately, this leads ultimately to a weaker result.

We have $A_\lambda=f_\lambda(A)=f(\lambda,A)$,
where $f:\DD\times A\to\CC$ is a holomorphic motion.
As before, we may extend $f$ to a holomorphic motion
$f:\DD\times\CC\to\CC$, and we shall assume that $f$
has been so extended.

The core of the proof is contained in the following lemma.

\begin{lemma}\label{L:heartH}
\begin{enumerate}[\normalfont(i)]
\item If $\dim_H(A)=0$, then $\dim_H(A_\lambda)=0$ for all $\lambda\in\DD$.
\item If $\dim_H(A)>0$, then there exists an inf-harmonic function
$u$ on~$\DD$ such that
\[
u(0)=1/\dim_H(A)
\quad\text{and}\quad
1/2\le u(\lambda)\le1/\dim_H(A_\lambda)\quad(\lambda\in\DD).
\]
\end{enumerate}
\end{lemma}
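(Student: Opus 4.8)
The plan is to run the argument of \S\ref{S:Minkowski} with \emph{coverings by dyadic squares} in place of packings by disks, using the dyadic Hausdorff content $\tilde{\cH}^s$ of Proposition~\ref{P:dyadic} in place of Proposition~\ref{P:Tricot}. Because $\dim_H$ is countably stable and $\dim_H(A)=\sup_k\dim_H(A\cap\overline{D}(0,k))$ (and similarly for $A_\lambda$), I would first assume $A$ is bounded, deducing the general statement by a normal-family limit at the end. Part~(i) is then quick: given $\lambda\in\DD$, pick $\rho\in(|\lambda|,1)$; Lemma~\ref{L:diameter} gives $\diam f_\lambda(Q)\le M^{1-\theta}(\diam Q)^{\theta}$ for dyadic squares $Q$ inside a fixed bounded set $B\supset A$, where $\theta:=(\rho-|\lambda|)/(\rho+|\lambda|)>0$ and $M:=\diam f(D(0,\rho)\times B)<\infty$; feeding a covering realising $\tilde{\cH}^{d}(A)=0$ through this estimate, and covering each $f_\lambda(Q)$ by a bounded number of dyadic squares of no larger diameter (as in \S\ref{S:dimension}), gives $\dim_H(A_\lambda)\le d/\theta$ for every $d>0$, hence $\dim_H(A_\lambda)=0$. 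For part~(ii), if $\dim_H(A)=2$ take $u\equiv1/2$; so assume $0<\dim_H(A)<2$.

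Fix $\rho\in(0,1)$ and work on $D(0,\rho)$, letting $\rho\to1$ at the end. Fix a bounded $B\supset A$, put $M:=\diam f(D(0,\rho)\times B)<\infty$, and choose $d_n\downarrow\dim_H(A)$ with $\dim_H(A)<d_n<2$. For each $n$, using Proposition~\ref{P:dyadic} (and the fact that one may take the covering squares disjoint), choose a disjoint dyadic covering $\cQ_n$ of $A$ by squares lying in $B$, with $\max_{Q\in\cQ_n}\diam Q\le1/n$ and
\[
\tfrac12\le\sum_{Q\in\cQ_n}(\diam Q)^{d_n}\le1.
\]
The lower bound can be arranged because $d_n<2$, so that subdividing a dyadic square strictly increases its contribution: start from a fine covering whose total is small (possible as $\tilde{\cH}^{d_n}(A)=0$) and inflate it in small steps. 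Set $a_Q(\lambda):=\diam f_\lambda(Q)/(2M)\in(0,\tfrac12)$; by Lemma~\ref{L:diameter}, $\log(1/a_Q)$ is inf-harmonic on $D(0,\rho)$. With $c_n:=\sum_{Q\in\cQ_n}(\diam Q/(2M))^{d_n}$, define
\[
s_n(\lambda):=\inf\Bigl\{\alpha>0:\sum_{Q\in\cQ_n}a_Q(\lambda)^{\alpha}\le c_n\Bigr\}.
\]
By Theorem~\ref{T:implicit}, $1/s_n$ is inf-harmonic on $D(0,\rho)$ (the case $s_n\equiv0$ is excluded since $\sum_Q a_Q(\lambda)^{\alpha}\to|\cQ_n|>c_n$ as $\alpha\to0^{+}$); moreover $\sum_Q a_Q(\lambda)^{s_n(\lambda)}=c_n$, and at $\lambda=0$, where $f_0$ is the identity, $s_n(0)=d_n$. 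By Proposition~\ref{P:normalfamily}, after relabelling, $1/s_n\to u$ locally uniformly on $D(0,\rho)$ for some inf-harmonic $u$ with $u(0)=1/\dim_H(A)$.

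It remains to establish the two inequalities on $D(0,\rho)$. For the upper bound, fix $\lambda$ and $c>1/u(\lambda)$ (legitimate since $u>0$ by Proposition~\ref{P:Harnackineq}). As $1/s_n(\lambda)\to u(\lambda)>1/c$, eventually $s_n(\lambda)<c$, so $\sum_Q(\diam f_\lambda(Q))^{c}=(2M)^{c}\sum_Q a_Q(\lambda)^{c}\le(2M)^{c}c_n$ stays bounded; covering each $f_\lambda(Q)$ by a bounded number of dyadic squares of diameter at most $\diam f_\lambda(Q)$, and noting $\max_Q\diam f_\lambda(Q)\to0$ by \eqref{E:diameter}, yields $\tilde{\cH}^{c}(A_\lambda)<\infty$, hence $\dim_H(A_\lambda)\le c$ by Proposition~\ref{P:dyadic}; now let $c\downarrow1/u(\lambda)$. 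The lower bound $u\ge1/2$ is the heart of the matter. By the isodiametric inequality and disjointness of the $f_\lambda(Q)$,
\[
\sum_Q a_Q(\lambda)^{2}=\frac{1}{(2M)^{2}}\sum_Q(\diam f_\lambda(Q))^{2}\le\frac{1}{\pi M^{2}}\sum_Q|f_\lambda(Q)|=\frac{1}{\pi M^{2}}\Bigl|f_\lambda\Bigl(\bigcup_{Q\in\cQ_n}Q\Bigr)\Bigr|.
\]
Here $\bigl|\bigcup_{Q\in\cQ_n}Q\bigr|=\tfrac12\sum_Q(\diam Q)^{2}\le\tfrac12\,n^{d_n-2}\sum_Q(\diam Q)^{d_n}\to0$; and $f_\lambda$, being quasiconformal (Theorem~\ref{T:motions}), is absolutely continuous with Jacobian in $L^{1}(B)$ (as $f_\lambda(B)$ is bounded), so $E\mapsto|f_\lambda(E)|$ is uniformly absolutely continuous on $B$, whence $|f_\lambda(\bigcup_Q Q)|\to0$ and therefore $\sum_Q a_Q(\lambda)^{2}\to0$. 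Since $c_n$ is bounded below by a positive constant independent of $n$---this is exactly where $\sum_Q(\diam Q)^{d_n}\ge\tfrac12$ is used---eventually $\sum_Q a_Q(\lambda)^{2}<c_n=\sum_Q a_Q(\lambda)^{s_n(\lambda)}$, forcing $s_n(\lambda)<2$, and hence $u(\lambda)=\lim 1/s_n(\lambda)\ge1/2$.

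Finally, letting $\rho\to1$ along a sequence and invoking Proposition~\ref{P:normalfamily} once more produces an inf-harmonic $u$ on $\DD$ with $u(0)=1/\dim_H(A)$ and $1/2\le u\le1/\dim_H(A_\lambda)$ throughout $\DD$, settling the bounded case; for general $A$ one applies this to each bounded $A^{(k)}:=A\cap\overline{D}(0,k)$ with $\dim_H(A^{(k)})>0$, obtaining inf-harmonic $u_k$ with $u_k(0)=1/\dim_H(A^{(k)})\to1/\dim_H(A)$ and $1/2\le u_k\le1/\dim_H(f_\lambda(A^{(k)}))$, where $1/\dim_H(f_\lambda(A^{(k)}))\downarrow1/\dim_H(A_\lambda)$, and then takes a locally uniform subsequential limit of the $u_k$ via Proposition~\ref{P:normalfamily}. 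The main obstacle is precisely the lower bound $u\ge1/2$: it forces the two-sided control on the covering sums (so that $c_n$ does not degenerate) and it relies on the interplay between diameters and areas---through the isodiametric inequality and the absolute continuity of quasiconformal maps---an ingredient with no analogue in the packing/Minkowski setting, which is why Theorem~\ref{T:Hausdorff} has a weaker conclusion than Theorems~\ref{T:Minkowski} and~\ref{T:packing}.
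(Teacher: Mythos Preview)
Your overall architecture matches the paper's closely: dyadic coverings via Proposition~\ref{P:dyadic}, Lemma~\ref{L:diameter} to get inf-harmonicity of $\log(1/a_Q)$, the implicit function theorem~\ref{T:implicit} to define $s_n$, and a normal-families pass to the limit. However, the step you flag as ``the heart of the matter''---the lower bound $u\ge1/2$---contains a genuine error: the isodiametric inequality goes the wrong way. Isodiametry says $|E|\le(\pi/4)(\diam E)^2$, i.e.\ $(\diam E)^2\ge(4/\pi)|E|$, so it gives a \emph{lower} bound on $\sum_Q(\diam f_\lambda(Q))^2$ in terms of $\sum_Q|f_\lambda(Q)|$, not the upper bound you wrote. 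A long thin image $f_\lambda(Q)$ can have large diameter and tiny area, so no general inequality of the form $(\diam f_\lambda(Q))^2\lesssim|f_\lambda(Q)|$ is available from geometry alone.

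What is needed is precisely the opposite estimate, $|f_\lambda(Q)|\ge c\,(\diam f_\lambda(Q))^2$, and this requires the quasiconformality of $f_\lambda$. The paper obtains it from Corollary~\ref{C:quasisym}(ii): each $f_\lambda(Q)$ contains a disk of radius $\delta'\diam f_\lambda(Q)$ (with $\delta'$ depending only on $\rho$), and since the $f_\lambda(Q)$ are disjoint and contained in a set of diameter $M$, a straightforward area comparison gives $\sum_Q(\diam f_\lambda(Q)/M)^2\le1/\delta'^2$. With this in hand the paper simply takes the threshold constant to be a \emph{fixed} $C\ge1/\delta'^2$, so that $s_n(\lambda)\le2$ for every $n$ and every $\lambda\in D(0,\rho)$; there is then no need for your two-sided control $\tfrac12\le\sum_Q(\diam Q)^{d_n}\le1$, for the $n$-dependent constants $c_n$, or for the absolute-continuity argument. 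Your route can be salvaged by inserting Corollary~\ref{C:quasisym} in place of the isodiametric step, but once you do that the simpler fixed-$C$ argument of the paper is available and avoids the extra bookkeeping.
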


\begin{proof}
If $\dim_H(A)=2$, then we may simply take $u\equiv1/2$.
Henceforth, we suppose that $0\le\dim_H(A)<2$.

Let $\rho\in(0,1)$. We shall carry out the proof on the disk
$D(0,\rho)$, and then let $\rho\to1$ at the very end.

Let $(d_n)$ be a sequence  such that $\dim_H(A)<d_n<2$
and $d_n\to\dim_H(A)$.
By Proposition~\ref{P:dyadic}, for each $n$ there exists
a (countable) cover $\cQ_n$ of $A$ by
disjoint dyadic squares
such that, as $n\to\infty$,
\begin{equation}\label{E:limitsQ}
\sup_{Q\in\cQ_n}\diam(Q)\to0
\quad\text{and}\quad
\sum_{Q\in\cQ_n}\diam(Q)^{d_n}\to0.
\end{equation}

We can suppose that all the squares in $\cup_n\cQ_n$ meet $A$.
Thus, if $B$ is the union of all the squares in $\cup_n\cQ_n$,
then $B$ is a bounded set.
By Lemma~\ref{L:diameter},
$M:=\diam f(D(0,\rho)\times B)<\infty$, and
$\lambda\mapsto\log(M/\diam f_\lambda(Q))$
is inf-harmonic on $D(0,\rho)$ for each $Q\in\cup_n\cQ_n$.

Fix a constant $C$, to be chosen later (it will depend only
on  $\rho$), and, for each $\lambda\in D(0,\rho)$,
set
\[
s_n(\lambda):=
\inf\Bigl\{ \alpha> 0 :\sum_{Q\in\cQ_n}
\Bigl(\frac{\diam f_\lambda(Q)}{M}\Bigr)^\alpha\le C\Bigr\}.
\]
By the implicit function theorem,
Theorem~\ref{T:implicit},
either $s_n\equiv0$ or
$1/s_n$ is inf-harmonic
on $D(0,\rho)$.
By Proposition~\ref{P:normalfamily}, a subsequence of $(s_n)$
(which, by relabelling, we may suppose to be the whole sequence)
converges locally uniformly to $s$ on $D(0,\rho)$,
where either   $s\equiv0$ or $1/s$ is inf-harmonic on $D(0,\rho)$.

From \eqref{E:limitsQ} we have
we have $s_n(0)\le d_n$ for all sufficiently large $n$,
so
\begin{equation}\label{E:s(0)}
s(0)=\lim_{n\to\infty}s_n(0)\le \lim_{n\to\infty}d_n=\dim_H(A).
\end{equation}

If $\alpha>s(\lambda)$ for some $\lambda\in D(0,\rho)$,
then $\alpha>s_n(\lambda)$ for all large enough~$n$,
so, for these $n$,
\[
\sum_{Q\in\cQ_n}\diam f_\lambda(Q)^\alpha\le CM^\alpha.
\]
For each $n$, the family $\{f_\lambda(Q):Q\in\cQ_n\}$ is a cover of $A_\lambda$.
Also, from \eqref{E:limitsQ} and \eqref{E:diameter}, we have
$\sup_{Q\in\cQ_n}\diam f_\lambda(Q)\to0$ as $n\to\infty$.
It follows from the definition of Hausdorff dimension
that $\dim_H(A_\lambda)\le\alpha$.
As this holds for each $\alpha>s(\lambda)$, we conclude that
\begin{equation}\label{E:s(lambda)}
s(\lambda)\ge \dim_H(A_\lambda)
\quad(\lambda\in D(0,\rho)).
\end{equation}

Next we show that, if the constant $C$ is chosen sufficiently large,
then we also have $s(\lambda)\le 2$ for all $\lambda\in D(0,\rho)$.
To achieve this, we once again
invoke the theory of quasiconformal mappings.
By Theorem~\ref{T:motions},
the map $f_\lambda$ is a
$\rho$-quasiconformal self-homeomorphism of $\CC$.
Consequently, by Corollary~\ref{C:quasisym}\,(ii),
there exists a constant $\delta'>0$, depending only on $\rho$,
such that,
for each open square $Q$ in $\CC$, the set $f_\lambda(Q)$ contains an open disk of
radius $\delta' \diam f_\lambda(Q)$.
In particular, for each $n$,
the disjoint sets $\{f_\lambda(Q):Q\in\cQ_n\}$ contain
disjoint disks of radii $\delta'\diam f_\lambda(Q)$.
As these disks are all contained within the set $f(D(0,\rho)\times B)$,
which has diameter $M$, consideration of their areas
leads to the inequality
\[
\sum_{Q\in\cQ_n}\pi(\delta'\diam f_\lambda(Q))^2\le \pi M^2,
\]
in other words,
\[
\sum_{Q\in\cQ_n}\Bigl(\frac{\diam(f_\lambda(Q))}{M}\Bigr)^2\le 1/\delta'^2.
\]
This shows that, if $C\ge1/\delta'^2$, then $s_n(\lambda)\le 2$
for all $n$, and consequently $s(\lambda)\le 2$.

To summarize, we have shown that, if $\dim_H(A)=0$, then $\dim_H(A_\lambda)=0$ for all $\lambda\in D(0,\rho)$ (combine \eqref{E:s(0)} and \eqref{E:s(lambda)}), and, if
$\dim_H(A)>0$, then $u:=1/s$ is an inf-harmonic function
on $D(0,\rho)$ such that
\[
u(0)=1/\dim_H(A)
\quad\text{and}\quad
1/2\le u(\lambda)\le1/\dim_H(A_\lambda)\quad(\lambda\in D(0,\rho)).
\]

The proof of the lemma is nearly complete,
except that $u$ is defined only on $D(0,\rho)$,
not on $\DD$. We fix this in exactly the same way as
at the end of the proof of Lemma~\ref{L:heartM}.
\end{proof}

\begin{remark}
Part (i) of Lemma~\ref{L:heartH} could also have been proved
using the well-known fact that the quasiconformal image of a set of
Hausdorff dimension zero also has Hausdorff dimension zero.
\end{remark}

\begin{proof}[Proof of Theorem~\ref{T:Hausdorff}]
We claim that, for each $\zeta\in\DD$,
if $\dim_H(A_\zeta)=0$,
then $\dim_H(A_\lambda)=0$ for all $\lambda\in\DD$,
and, if
$\dim_H(A_{\zeta})>0$,
then there exists an inf-harmonic function
$u_\zeta$ on $\DD$ such that
\[
u_\zeta(\zeta)=1/\dim_H(A_\zeta)
\quad\text{and}\quad
1/2\le u_\zeta(\lambda)\le 1/\dim_H(A_\lambda)
\quad(\lambda\in\DD).
\]
The special case $\zeta=0$ has been proved in
Lemma~\ref{L:heartH}, and the general case is deduced
from this just as in the proof of Theorem~\ref{T:Minkowski}
at the end of~\S\ref{S:Minkowski}.

Thus, either $\dim_H(A_\lambda)=0$ for all $\lambda\in\DD$, or
$\dim_H(A_\lambda)>0$ for all $\lambda\in\DD$. In the latter case,
we have
\[
\frac{1}{\dim_H(A_\lambda)}-\frac{1}{2}
=\sup_{\zeta\in\DD}(u_\zeta(\lambda)-1/2)
\quad(\lambda\in\DD),
\]
where the right-hand side is the supremum of a family
of functions that are inf-harmonic on $\DD$.
\end{proof}


\section{Proof of Theorem~\ref{T:converse}}\label{S:converse}

The essential idea of the proof is contained in the following lemma,
which is based on a construction in Astala's paper \cite{As94}.

\begin{lemma}\label{L:Astala}
Let $h:\DD\to(0,\infty)$ be a positive harmonic function, and let $n\ge10$.
Then there exists a holomorphic motion $\lambda\mapsto E_\lambda$ such that
$E_\lambda$ is a compact subset of $\DD$ for all $\lambda\in\DD$ and
\[
\frac{1}{\dim_H(E_\lambda)}
=\frac{1}{\dim_P(E_\lambda)}
=h(\lambda)+\frac{1}{2}+\frac{\log 2}{2\log n} \quad(\lambda\in\DD).
\]
\end{lemma}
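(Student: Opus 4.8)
The plan is to realize $E_\lambda$ as the limit set of an iterated function system whose contraction ratios depend holomorphically on $\lambda$, so that Theorem~\ref{T:simdim} (via Corollary~\ref{C:implicit}) controls its dimension, and whose branches are moved by a genuine holomorphic motion. Concretely, fix $n$ disjoint closed disks $D_1,\dots,D_n$ inside $\DD$, chosen small enough and well separated so that the open-set condition holds robustly under small perturbations. For each $\lambda\in\DD$ define a contraction ratio $r_j(\lambda)$ depending on $\lambda$, and let $E_\lambda$ be the attractor of the similarities $\gamma_{j,\lambda}(z)=r_j(\lambda)(z-c_j)+c_j$, where $c_j$ is the centre of $D_j$. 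Since all $n$ similarities share the same ratio profile, the similarity dimension $s(\lambda)$ solves $\sum_{j=1}^n r(\lambda)^{s(\lambda)}=1$, i.e.\ $n\, r(\lambda)^{s(\lambda)}=1$, so $s(\lambda)=\log n/\log(1/r(\lambda))$ and hence $1/s(\lambda)=\log(1/r(\lambda))/\log n$. To match the target $h(\lambda)+\tfrac12+\tfrac{\log 2}{2\log n}$ I would therefore choose $r(\lambda)$ so that $\log(1/r(\lambda))=(h(\lambda)+\tfrac12)\log n+\tfrac12\log 2$; equivalently $r(\lambda)=2^{-1/2}\,n^{-h(\lambda)-1/2}$. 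One checks this lies in $(0,1)$ (using $h>0$ and $n\ge 10$ to keep $r$ safely small, so that the disks $\gamma_{j,\lambda}(D_j)$ stay inside $D_j$ and pairwise disjoint), and that $\log(1/r(\lambda))$ is a positive harmonic function of $\lambda$, so $1/s(\lambda)$ is harmonic — in particular $\log(1/r_j)$ is inf-harmonic, which is what Corollary~\ref{C:implicit} needs to conclude $1/s$ is inf-harmonic; here it is outright harmonic, and matches the stated formula exactly.

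The second ingredient is that the attractors $\{E_\lambda\}_{\lambda\in\DD}$ genuinely form a holomorphic motion. The natural coding map identifies $E_\lambda$ with $\{0,1\}$-free sequences in $\{1,\dots,n\}^{\mathbb N}$ via $\sigma=(j_1,j_2,\dots)\mapsto \pi_\lambda(\sigma):=\lim_{m\to\infty}\gamma_{j_1,\lambda}\circ\cdots\circ\gamma_{j_m,\lambda}(z_0)$, and one should check (a) $\pi_\lambda$ is injective on code space — this is where the open set condition / geometric separation of the $D_j$'s is used, uniformly in $\lambda$; (b) $\lambda\mapsto\pi_\lambda(\sigma)$ is holomorphic for each fixed $\sigma$, since it is a locally uniform limit of the compositions $\gamma_{j_1,\lambda}\circ\cdots\circ\gamma_{j_m,\lambda}(z_0)$, each of which is holomorphic in $\lambda$ (being a polynomial in the holomorphic functions $r_j(\lambda)$), and the convergence is geometric at rate $\sup_j|r_j(\lambda)|<1$ uniformly on compact subsets of $\DD$; and (c) $\pi_0$ is the identity embedding of $E_0$ — this requires normalizing the construction so that $r_j(0)$ gives $E_0$ exactly, with $f(0,\cdot)=\mathrm{id}$; since Definition~\ref{D:holomotion} only asks $f(0,z)=z$ on the set $A:=E_0$, I would simply set $A=E_0$ and $f(\lambda,\pi_0(\sigma)):=\pi_\lambda(\sigma)$, which is well-defined by injectivity of $\pi_0$. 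Then $f:\DD\times A\to\CC$ is a holomorphic motion with $f_\lambda(A)=E_\lambda\subset\DD$ compact.

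Finally, to pin down the dimensions: for each fixed $\lambda$ the system $\{\gamma_{1,\lambda},\dots,\gamma_{n,\lambda}\}$ satisfies the open set condition (take the open set to be, say, a fixed large disk or the interiors of the $D_j$'s, again using the uniform separation), so Theorem~\ref{T:simdim} gives $\dim_H(E_\lambda)=\dim_P(E_\lambda)=s(\lambda)$, and $1/s(\lambda)$ is precisely $h(\lambda)+\tfrac12+\tfrac{\log 2}{2\log n}$ by the choice of $r$. I expect the main obstacle to be the book-keeping in the geometric setup: one must choose the centres $c_j$ and radii of the $D_j$ — and verify the bound $n\ge 10$ suffices — so that for \emph{all} $\lambda\in\DD$ simultaneously the images $\gamma_{j,\lambda}(D_j)\subset D_j$ are pairwise disjoint and contained in $\DD$, given that $r(\lambda)$ can be as large as $2^{-1/2}n^{-1/2}$ when $h(\lambda)$ is small; placing $n$ disks of radius $\approx 1/(3n)$ at the $n$-th roots of, say, $1/2$ should do it, but the constants are the delicate part. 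Everything else — holomorphy, injectivity, the dimension formula — then follows from the results already in the excerpt.
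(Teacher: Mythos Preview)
Your approach is essentially the same as the paper's: build $E_\lambda$ as the attractor of $n$ similarities with a common holomorphic contraction ratio, invoke the Hutchinson--Moran formula (Theorem~\ref{T:simdim}), and verify that the coding map gives a holomorphic motion. The dimension computation you give is correct and matches the paper's exactly.

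There is, however, one genuine gap. You define $r(\lambda)=2^{-1/2}\,n^{-h(\lambda)-1/2}$ and then later assert that the compositions $\gamma_{j_1,\lambda}\circ\cdots\circ\gamma_{j_m,\lambda}(z_0)$ are ``polynomial in the holomorphic functions $r_j(\lambda)$''. But your $r(\lambda)$ is a \emph{positive real number} for every $\lambda$, since $h$ is real-valued harmonic; a nonconstant real-valued function on $\DD$ cannot be holomorphic. With this $r$, the map $\lambda\mapsto\gamma_{j,\lambda}(z)$ is not holomorphic, and the coding-map argument for (b) collapses. The fix is straightforward and is exactly what the paper does: since $h$ is positive harmonic, there is a holomorphic function $a:\DD\to\DD\setminus\{0\}$ with $\log|a(\lambda)|=-h(\lambda)\log n$ (take $a=\exp(-H\log n)$ where $H$ is holomorphic with $\Re H=h$). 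Then set the contraction factor to be $r\,a(\lambda)$ with $r=1/\sqrt{2n}$; its \emph{modulus} is your $r(\lambda)$, which is all the dimension formula sees, while the factor itself is now holomorphic in $\lambda$.

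Two minor remarks on the geometry, where the paper's choices are cleaner than your sketch. First, the paper uses $\gamma_{j,\lambda}(z)=r\,a(\lambda)\,z+w_j$ (rather than fixing the centres as fixed points), and takes the open set in the OSC to be $\DD$ itself: then $\gamma_{j,\lambda}(\DD)=D(w_j,r|a(\lambda)|)\subset D(w_j,r)$, and the required separation is simply that $n$ disjoint closed disks of radius $r=1/\sqrt{2n}$ fit inside $\DD$. This is precisely where the hypothesis $n\ge10$ is used. Second, for the holomorphic-motion property of the attractors the paper simply cites \cite[Theorem~4]{BR06}; your coding-map argument is the standard proof of that fact and is fine once the holomorphy of the contraction factor is restored.
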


\begin{proof}
As $h$ is a positive harmonic function on $\DD$,
there exists a holomorphic function $a:\DD\to\DD\setminus\{0\}$ such that
\[
\log|a(\lambda)|=-h(\lambda)\log n \quad (\lambda\in\DD).
\]

Let $\overline{D}(w_1,r),\dots,\overline{D}(w_n,r)$
be disjoint closed disks inside $\DD$,
where $r=1/\sqrt{2n}$.
Such disks may be found if $n\ge10$.
For $j=1,\dots,n$ and $\lambda\in\DD$, define
\[
\gamma_{j,\lambda}(z):=ra(\lambda)z+w_j \quad(z\in\CC).
\]

Note that $\gamma_{j,\lambda}(\DD)\subset D(w_j,r)$
for each $j=1,\dots,n$ and each $\lambda\in\DD$.
Thus, for each $\lambda\in\DD$,
the family $\{\gamma_{j,\lambda}: j=1,\dots,n\}$ generates
an iterated function system satisfying the open set condition.
If we denote by $E_\lambda$ its limit set,
then  $\lambda\mapsto E_\lambda$ is a
compact-valued holomorphic motion
(see e.g.\ \cite[Theorem~4]{BR06}) such that
$E_\lambda\subset \cup_{j=1}^n\overline{D}(w_j,r)\subset\DD$ for all $\lambda\in\DD$.
Moreover, by a special case of the Hutchinson--Moran formula Theorem~\ref{T:simdim},
the Hausdorff and packing dimensions of $E_\lambda$
are given by $\dim_HE_\lambda=\dim_P E_\lambda=s(\lambda)$,
where $s(\lambda)$ is the solution of the equation
\[
n (r|a(\lambda)|)^{s(\lambda)}=1.
\]
Solving this equation, we obtain
\[
\frac{1}{s(\lambda)}
=-\frac{\log(r|a(\lambda)|)}{\log n}
=\frac{\log(\sqrt{2n})+h(\lambda)\log n}{\log n}
=\frac{\log 2}{2\log n}+\frac{1}{2}+h(\lambda).
\]
This completes the proof.
\end{proof}

\begin{lemma}\label{L:infhn}
Let $D$ be a domain and let $u$ be an inf-harmonic function on~$D$.
Then there exists a sequence $(h_n)_{n\ge1}$
of positive harmonic functions on $D$ such that,
for every $m\ge1$, we have $u=\inf_{n\ge m}h_n$ on $D$.
\end{lemma}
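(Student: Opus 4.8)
The plan is to realize $u$ as a countable infimum of positive harmonic functions in such a way that the infimum over any cofinal tail still equals $u$. First I would recall that, by Definition~\ref{D:infharmonic}, there is some family $\cH$ of harmonic functions on $D$ with $u=\inf_{h\in\cH}h$, and these are automatically positive since $u\ge0$ (if $u\not\equiv0$, Proposition~\ref{P:Harnackineq} gives $u>0$ everywhere; the case $u\equiv0$ must be handled separately, see below). The naive idea of just picking a countable subfamily fails because a countable subfamily need not have infimum $u$; the fix is to use continuity of $u$ (Corollary~\ref{C:Harnackineq}) together with a countable dense set of points.

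Concretely, fix a countable dense subset $\{\lambda_j\}_{j\ge1}$ of $D$. For each pair $(j,k)$ of positive integers, choose $h_{j,k}\in\cH$ with $h_{j,k}(\lambda_j)\le u(\lambda_j)+1/k$; this is possible by the definition of the infimum. Since $h_{j,k}\ge u$ pointwise on $D$, the countable collection $\{h_{j,k}\}$ satisfies $\inf_{j,k}h_{j,k}\ge u$, while at each $\lambda_j$ we have $\inf_{k}h_{j,k}(\lambda_j)=u(\lambda_j)$, so $\inf_{j,k}h_{j,k}=u$ on the dense set $\{\lambda_j\}$. Now let $v:=\inf_{j,k}h_{j,k}$; this is an inf-harmonic function with $v\ge u$ and $v=u$ on a dense set, and both $u$ and $v$ are continuous (Corollary~\ref{C:Harnackineq}), so $v=u$ on all of $D$. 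Reindex $\{h_{j,k}\}$ as a single sequence $(g_i)_{i\ge1}$.

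The remaining point is to upgrade "the infimum of the whole sequence equals $u$" to "the infimum of every tail equals $u$". For this I would interleave: define $h_n$ so that the sequence $(h_n)$ cycles through $(g_i)$ infinitely often — for instance, enumerate $\NN\times\NN$ by a bijection $\sigma:\NN\to\NN\times\NN$, $\sigma(n)=(p(n),q(n))$, and set $h_n:=g_{q(n)}$. Then for any fixed $m$, the tail $\{h_n:n\ge m\}$ still contains every $g_i$ (since each $g_i$ appears as $h_n$ for infinitely many $n$), hence $\inf_{n\ge m}h_n\le\inf_i g_i=u$; and each $h_n\ge u$, so $\inf_{n\ge m}h_n=u$. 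Finally, for the degenerate case $u\equiv0$: this is not an infimum of \emph{positive} harmonic functions in general, so the statement should be (and, I expect, is implicitly) understood to exclude it, or else one observes $u\equiv0$ cannot occur when the hypotheses of the intended application force $u>0$; I would simply note that when $u\not\equiv0$ Proposition~\ref{P:Harnackineq} guarantees $u>0$ throughout, so the $h_{j,k}$ may be taken positive. The main obstacle is purely bookkeeping — ensuring the tail property via the interleaving — rather than anything analytically deep; the analytic content (continuity, positivity, the super-mean-value property) is already packaged in the earlier propositions.
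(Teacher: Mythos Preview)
Your argument is correct and is essentially the paper's own proof: pick harmonic majorants approximating $u$ at a countable dense set, invoke continuity to get equality everywhere, and arrange that each majorant recurs infinitely often so every tail still has infimum $u$; the paper merely compresses your two-stage construction into one by choosing a sequence $(\lambda_n)$ in the dense set that visits every point infinitely often and taking $h_n\ge u$ with $h_n(\lambda_n)<u(\lambda_n)+1/n$. Your worry about $u\equiv0$ is misplaced --- the constant functions $h_n:=1/n$ already witness the conclusion in that case.
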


\begin{proof}
Let $S$ be a countable dense subset of $D$,
and let $(\lambda_n)_{n\ge1}$ be a sequence in $S$
that visits every point of $S$ infinitely often.
Since $u$ is inf-harmonic on~$D$, for each $n\ge1$
there exists a positive harmonic function $h_n$ on $D$ such that
$h_n\ge u$ and $h_n(\lambda_n)<u(\lambda_n)+1/n$.
Then, for each $m\ge1$, we have $u=\inf_{n\ge m}h_n$ on $S$.
Since $S$ is dense in $D$ and inf-harmonic functions are automatically continuous,
it follows that $u=\inf_{n\ge m}h_n$ on $D$.
\end{proof}

\begin{proof}[Proof of Theorem~\ref{T:converse}]
Set $u(\lambda):=1/d(\lambda)-1/2$.
Since $1/d$ is inf-harmonic and $1/d\ge1/2$,
it follows that $u$ is inf-harmonic as well.
By Lemma~\ref{L:infhn},
there exists a sequence $(h_n)_{n\ge1}$
of positive harmonic functions on $\DD$ such that
$u=\inf_{n\ge m}h_n$ for every $m\ge1$.

By Lemma~\ref{L:Astala}, for each $n\ge10$,
there exists a compact-valued holomorphic motion
$\lambda\mapsto E^{(n)}_\lambda$ in $\DD$ such that
\[
\frac{1}{\dim_H(E^{(n)}_\lambda)}
=\frac{1}{\dim_P(E^{(n)}_\lambda)}
=h_n(\lambda)+\frac{1}{2}+\frac{\log 2}{2\log n}
\quad(\lambda\in\DD).
\]
Fix a sequence of disjoint closed disks $\overline{D}(\zeta_n,s_n)$ in $\CC$
such that $\zeta_n\to0$ and $s_n\to0$, and define
\[
A_\lambda:=\bigcup_{n\ge 10}(s_nE^{(n)}_\lambda+\zeta_n)\cup\{0\}
\quad(\lambda\in\DD).
\]
Then $\lambda\mapsto A_\lambda$ is a union of
holomorphic motions taking place in disjoint disks,
so it is itself a holomorphic motion.
Moreover $A_\lambda$ is a compact set for each $\lambda\in\DD$.
Finally, since both Hausdorff dimension and packing dimension
are countably stable,
and these dimensions are unchanged under similarities, we have
\begin{align*}
\frac{1}{\dim_H(A_\lambda)}
=\frac{1}{\dim_P(A_\lambda)}
&=\inf_{n\ge10}\biggl(\frac{1}{\dim_P(E^{(n)}_\lambda)}\biggr)\\
&=\inf_{n\ge10}\Bigl(h_n(\lambda)+\frac{1}{2}+\frac{\log 2}{2\log n} \Bigr)\\
&=u(\lambda)+\frac{1}{2}=\frac{1}{d(\lambda)}.
\end{align*}
In other words, $\dim_H(A_\lambda)=\dim_P(A_\lambda)=d(\lambda)$ for all $\lambda\in\DD$.
This completes the proof.
\end{proof}


\section{Proof of Theorem~\ref{T:Area}}\label{S:Area}

In this section, we prove Theorem \ref{T:Area} on the variation of the area of a set moving under a holomorphic motion. The proof of part (i) follows closely the ideas of \cite{EH95}, as elaborated in \cite[\S13.1]{AIM09}. We first need the following lemmas.

\begin{lemma}\label{L:implicit2}
Let $(\Omega,\nu)$ be a  measure space and  let $a:\Omega\to(0,\infty)$ be a measurable function
such that $\int_\Omega a\,d\nu<\infty$.
Then, for every measurable function $p:\Omega\to(0,\infty)$
such that $\int_\Omega p\,d\nu=1$, we have
\[
\log\Bigl(\int_\Omega a\,d\nu\Bigr)
\ge\int_\Omega p\log\Bigl(\frac{a}{p}\Bigr)\,d\nu,
\]
with equality if $p=a/(\int_\Omega a\,d\nu)$.
\end{lemma}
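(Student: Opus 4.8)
The plan is to recognize this as a form of Jensen's inequality (equivalently, the non-negativity of relative entropy), and to prove it directly from the elementary bound $\log t\le t-1$, valid for all $t>0$, which has the advantage of sidestepping integrability subtleties.

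First I would note that $\nu$ cannot be the zero measure, since $\int_\Omega p\,d\nu=1$; combined with the fact that $a>0$ everywhere, this forces $I:=\int_\Omega a\,d\nu\in(0,\infty)$, so $\log I$ is a finite real number. Using $\int_\Omega p\,d\nu=1$, I would then rewrite
\[
\int_\Omega p\log\Bigl(\frac{a}{p}\Bigr)\,d\nu-\log I
=\int_\Omega p\log\Bigl(\frac{a}{pI}\Bigr)\,d\nu,
\]
and apply the pointwise inequality $\log t\le t-1$ with $t=a/(pI)$, which gives $p\log\bigl(a/(pI)\bigr)\le a/I-p$ at every point of $\Omega$. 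Integrating and using $\int_\Omega a\,d\nu=I$ and $\int_\Omega p\,d\nu=1$ yields $\int_\Omega p\log\bigl(a/(pI)\bigr)\,d\nu\le 1-1=0$, which is precisely the claimed inequality. For the equality case I would observe that $\log t=t-1$ holds iff $t=1$; hence when $p=a/I$ the integrand above vanishes identically and equality holds throughout (alternatively, one checks directly that $p=a/I$ gives $\int_\Omega p\log(a/p)\,d\nu=\int_\Omega p\log I\,d\nu=\log I$).

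The only point requiring a moment's care is the well-definedness of $\int_\Omega p\log(a/p)\,d\nu$, since $\log$ takes negative values; but the bound $p\log(a/p)\le a-p$, with $a-p\in L^1(\nu)$, shows that the positive part of the integrand is integrable, so the integral is well-defined in $[-\infty,\infty)$ and the stated inequality is meaningful (and trivially valid if the left-hand integral is $-\infty$). I do not expect any genuine obstacle here: the entire content of the lemma is the concavity of $\log$, and the $\log t\le t-1$ trick makes the argument completely elementary.
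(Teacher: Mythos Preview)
Your proof is correct and is essentially the same as the paper's: the paper simply invokes Jensen's inequality for the concave function $\log$ on the probability space $(\Omega,\,p\,d\nu)$, while you unpack that same inequality via the tangent-line bound $\log t\le t-1$, which is the standard elementary proof of Jensen for $\log$. Your version is a bit more careful about integrability (showing the integral is well-defined in $[-\infty,\infty)$), but the underlying idea is identical.
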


\begin{proof}
The inequality follows from Jensen's inequality applied to the concave function $\log x$ and the probability space $(\Omega,\,p\,d\nu)$. The case of equality is obvious.
\end{proof}

\begin{lemma}\label{L:harmonic}
Let $D$ be a plane domain, let  $(\Omega,\nu)$ be a finite measure space,
and let $h:D\times\Omega\to\RR$ be a measurable function such that:
\begin{itemize}
\item $\lambda\mapsto h(\lambda,\omega)$ is harmonic on $D$, for each $\omega\in\Omega$;
\item $\sup_{K\times\Omega}|h(\lambda,\omega)|<\infty$ for each compact $K\subset D$.
\end{itemize}
Then the function $H(\lambda):=\int_\Omega h(\lambda,\omega)\,d\nu(\omega)$ is harmonic on $D$.
\end{lemma}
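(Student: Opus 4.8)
The plan is to show that $H$ is harmonic by verifying that it is continuous and satisfies the mean value property over small circles, invoking a standard characterization of harmonic functions. First I would fix a compact disk $\overline{D}(\lambda_0,r)\subset D$; by hypothesis, $h$ is uniformly bounded on $\overline{D}(\lambda_0,r)\times\Omega$ by some constant $C$, and since $\nu$ is a finite measure, the constant function $C$ is $\nu$-integrable on $\Omega$. This gives a uniform integrable majorant, which is exactly what is needed to apply dominated-convergence and Fubini-type arguments on the disk.

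Next, I would establish continuity of $H$ on $D$. For $\lambda\to\lambda_0$ inside $\overline{D}(\lambda_0,r)$, the integrands $h(\lambda,\omega)\to h(\lambda_0,\omega)$ pointwise in $\omega$ (each slice being harmonic, hence continuous in $\lambda$), and they are dominated by the constant $C\in L^1(\nu)$; the dominated convergence theorem then yields $H(\lambda)\to H(\lambda_0)$. The same domination legitimizes interchanging the integral over $\Omega$ with an integral over a circle $\{|\lambda-\lambda_0|=\rho\}$ for $\rho<r$: by Fubini's theorem,
\[
\frac{1}{2\pi}\int_0^{2\pi}H(\lambda_0+\rho e^{i\theta})\,d\theta
=\int_\Omega\Bigl(\frac{1}{2\pi}\int_0^{2\pi}h(\lambda_0+\rho e^{i\theta},\omega)\,d\theta\Bigr)d\nu(\omega)
=\int_\Omega h(\lambda_0,\omega)\,d\nu(\omega)=H(\lambda_0),
\]
where the inner equality is the mean value property for the harmonic slice $\lambda\mapsto h(\lambda,\omega)$. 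Thus $H$ is continuous on $D$ and satisfies the mean value property on every circle whose closed disk lies in $D$, so $H$ is harmonic on $D$.

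I do not expect a serious obstacle here: the only point requiring a little care is measurability, namely that $(\lambda,\omega)\mapsto h(\lambda,\omega)$ being jointly measurable makes the double integrals meaningful and allows Fubini to apply — this is guaranteed by the hypothesis that $h$ is measurable on the product. One could alternatively phrase the argument in terms of differentiating under the integral sign to get $\Delta H=\int_\Omega\Delta_\lambda h\,d\nu=0$, but justifying the two derivatives through the integral again rests on the same uniform bound, so the mean value approach is cleaner and avoids smoothness technicalities.
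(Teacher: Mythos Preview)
Your proof is correct and follows essentially the same approach as the paper: continuity of $H$ via the dominated convergence theorem, the mean value property via Fubini together with the harmonicity of each slice $h(\cdot,\omega)$, and the standard characterization of harmonic functions. The paper's argument is just a terser version of what you wrote.
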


\begin{proof}
The function $H$ is continuous on $D$, by the dominated convergence theorem.
Also it satisfies the mean-value property on $D$, by the harmonicity of $h(\cdot,\omega)$ and Fubini's theorem.
Therefore $H$ is harmonic on $D$.
\end{proof}

\begin{lemma}\label{L:qcbounds}
Let $k\in(0,1)$ and let $R>0$.
Let $g,g_n:\CC\to\CC$ be $k$-quasiconformal homeomorphisms such that
\begin{itemize}
\item $\mu_{g_n} \to \mu_g$ a.e. on $\CC$,
\item $\supp \mu_{g_n}\subset D(0,R)$ for each $n \geq 1$,
\item $g_n(z)=z+o(1)=g(z)$ as $|z|\to\infty$ for each $n \geq 1$.
\end{itemize}
Then
\[
\|\partial_z g_n-\partial_z g\|_{L^2(\CC)}\to0
\quad\text{and}\quad \|\partial_{\overline{z}}g_n-\partial_{\overline{z}}g\|_{L^2(\CC)}\to0.
\]
\end{lemma}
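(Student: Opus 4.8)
The plan is to reduce the $L^2$-convergence of the derivatives to an $L^2$-convergence of the Beltrami coefficients, and then exploit the Neumann-series representation of the solution operator for the Beltrami equation. First I would normalize: since each $g_n$ and $g$ fixes $\infty$ with $g_n(z)=z+o(1)$, the functions $g_n(z)-z$ and $g(z)-z$ lie in the homogeneous Sobolev space $\dot{W}^{1,2}(\CC)$, and by the standard theory (see \cite[Ch.~5]{AIM09}) one has $\partial_{\overline z}g_n=\mu_{g_n}\,\partial_z g_n$ with $\partial_z g_n-1\in L^2(\CC)$, represented via the Beurling transform $S$ as $\partial_z g_n-1=(I-\mu_{g_n}S)^{-1}(\mu_{g_n})\cdot S(\cdot)$; more precisely, writing $\phi_n:=\partial_z g_n-1$, one has $\phi_n=S(\mu_{g_n}(1+\phi_n))$, i.e. $\phi_n=(I-\mu_{g_n}S)^{-1}S\mu_{g_n}$, and since $\|\mu_{g_n}\|_\infty\le k<1$ and $\|S\|_{L^2\to L^2}=1$, the operator $(I-\mu_{g_n}S)^{-1}$ is bounded on $L^2(\CC)$ uniformly in $n$ with norm at most $1/(1-k)$.

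Next I would establish that $\mu_{g_n}\to\mu_g$ in $L^2(\CC)$. This uses the two hypotheses $\supp\mu_{g_n}\subset D(0,R)$ and $\mu_{g_n}\to\mu_g$ a.e.: the supports are contained in a fixed set of finite measure, the functions are uniformly bounded by $k$, so dominated convergence on $D(0,R)$ gives $\|\mu_{g_n}-\mu_g\|_{L^2(\CC)}\to0$ (note $\mu_g$ is then also supported in $\overline{D}(0,R)$ up to a null set). With this in hand I would write, with $\phi:=\partial_z g-1$,
\[
\phi_n-\phi=(I-\mu_{g_n}S)^{-1}S\mu_{g_n}-(I-\mu_gS)^{-1}S\mu_g,
\]
and use the resolvent identity $(I-\mu_{g_n}S)^{-1}-(I-\mu_gS)^{-1}=(I-\mu_{g_n}S)^{-1}(\mu_{g_n}-\mu_g)S(I-\mu_gS)^{-1}$ together with the uniform operator bound to conclude
\[
\|\phi_n-\phi\|_{L^2}\le \frac{1}{1-k}\|(\mu_{g_n}-\mu_g)S(\phi+1)\|_{L^2}+\frac{1}{(1-k)^2}\|(\mu_{g_n}-\mu_g)S\mu_g\|_{L^2};
\]
wait, I should be careful that $S(\phi+1)$ need not be in $L^2$ because $S1$ is not; instead I keep the term $S\mu_{g_n}(1+\phi_n)$ intact. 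Cleanly: $\phi_n=S(\mu_{g_n}+\mu_{g_n}\phi_n)$ and $\phi=S(\mu_g+\mu_g\phi)$, so $\phi_n-\phi=S\big((\mu_{g_n}-\mu_g)+\mu_{g_n}\phi_n-\mu_g\phi\big)=S\big((\mu_{g_n}-\mu_g)(1+\phi)+\mu_{g_n}(\phi_n-\phi)\big)$, hence $(I-S\mu_{g_n})(\phi_n-\phi)=S\big((\mu_{g_n}-\mu_g)(1+\phi)\big)$. Since $1+\phi=\partial_z g\in L^2_{\loc}$ and $(\mu_{g_n}-\mu_g)$ is supported in a fixed disk and tends to $0$ in $L^\infty$ on that disk (or at least in $L^2$ by dominated convergence, and $1+\phi$ is square-integrable there because $\partial_z g-1\in L^2(\CC)$ and $1$ is square-integrable on a bounded set), the right side $\to0$ in $L^2(\CC)$. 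Then $\|\phi_n-\phi\|_{L^2}\le\frac{1}{1-k}\|S((\mu_{g_n}-\mu_g)(1+\phi))\|_{L^2}\to0$, giving $\partial_z g_n\to\partial_z g$ in $L^2(\CC)$. Finally, for the $\overline z$-derivatives, $\partial_{\overline z}g_n-\partial_{\overline z}g=\mu_{g_n}\partial_z g_n-\mu_g\partial_z g=\mu_{g_n}(\partial_z g_n-\partial_z g)+(\mu_{g_n}-\mu_g)\partial_z g$, and both terms go to $0$ in $L^2$: the first by the bound $\|\mu_{g_n}\|_\infty\le k$ and the convergence just proved, the second since $(\mu_{g_n}-\mu_g)\to0$ boundedly with fixed support and $\partial_z g\in L^2$ on that disk.

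The main obstacle is handling the terms that are not globally square-integrable — in particular $S(1)$ and the constant part of $\partial_z g$ — which is why it is essential to keep the products $\mu_{g_n}(1+\phi_n)$ together rather than expanding, and to use crucially that $\mu_{g_n}-\mu_g$ has uniformly bounded support so that $(\mu_{g_n}-\mu_g)\cdot(\text{anything locally }L^2)$ lands in $L^2(\CC)$. The uniform invertibility of $I-\mu_{g_n}S$ on $L^2$, which comes from $\|\mu_{g_n}\|_\infty\le k<1$ and the isometry property of the Beurling transform, is the other ingredient that makes the resolvent estimate go through uniformly in $n$.
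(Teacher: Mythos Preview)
Your argument is correct. After your self-correction, the identity $(I-S\mu_{g_n})(\phi_n-\phi)=S\bigl((\mu_{g_n}-\mu_g)(1+\phi)\bigr)$ is right, the uniform bound $\|(I-S\mu_{g_n})^{-1}\|_{L^2\to L^2}\le 1/(1-k)$ applies, and dominated convergence on the fixed disk $D(0,R)$ kills the right-hand side; the $\partial_{\overline z}$-convergence then follows from the Beltrami relation as you indicate. One small point: early on you wrote the inverse as $(I-\mu_{g_n}S)^{-1}$, whereas the operator that actually appears is $(I-S\mu_{g_n})^{-1}$; your ``clean'' version has it right.

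The paper proceeds differently and more briefly. It first obtains $\|\partial_{\overline z}g_n-\partial_{\overline z}g\|_{L^2}\to0$ by citing \cite[Lemma~5.3.1]{AIM09}, and then observes that the Beurling transform $S$ is a unitary operator on $L^2(\CC)$ taking $\partial_{\overline z}(g_n-g)$ to $\partial_z(g_n-g)$, so $\|\partial_z g_n-\partial_z g\|_{L^2}=\|\partial_{\overline z}g_n-\partial_{\overline z}g\|_{L^2}$ and the second limit is immediate. Thus the paper goes $\partial_{\overline z}\Rightarrow\partial_z$ via a one-line isometry argument, whereas you go $\partial_z\Rightarrow\partial_{\overline z}$ via a resolvent estimate. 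Your approach is essentially a self-contained proof of the cited lemma: it exposes the mechanism (Neumann series, uniform support, dominated convergence) explicitly, at the cost of being longer; the paper's approach is shorter but relies on a black-box citation.
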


\begin{proof}
The second limit holds by
\cite[Lemma~5.3.1]{AIM09}.
The first limit is an automatic consequence, since
$\|\partial_z g_n-\partial_z g\|_{L^2(\CC)}=\|\partial_{\overline{z}}g_n-\partial_{\overline{z}}g\|_{L^2(\CC)}$.
This is because the Beurling transform, which takes $\partial_{\overline{z}}f$ to $\partial_z f$, is a unitary operator on $L^2(\CC)$
(see the discussion on \cite[p.95]{AIM09}).
\end{proof}

\begin{proof}[Proof of Theorem~\ref{T:Area}]
Let $f:\DD\times\CC\to\CC$ be a holomorphic motion.
Suppose that there exists a compact subset $\Delta$ of $\CC$  such that, for each $\lambda\in\DD$,
the map $f_\lambda$ is conformal on $\CC\setminus\Delta$
and $f_\lambda(z)=z+O(1)$ near $\infty$.
Let $A$ be a Borel subset of $\Delta$ such that $|A|>0$. We begin with some preliminary remarks.

The first remark is that, in the normalization $f_\lambda(z)=z+O(1)$ near $\infty$,
we may as well suppose that in fact $f_\lambda(z)=z+o(1)$ near $\infty$.
Indeed, it suffices to consider the translated holomorphic motion $f(\lambda,z)-a_0(\lambda)$,
where $a_0(\lambda)$ is the constant coefficient in the Laurent expansion of $f_\lambda(z)$ near infinity.
Note that $a_0(\lambda)$ is holomorphic in $\DD$, as can be seen  from the formula
\[
a_0(\lambda)=\frac{1}{2\pi i}\int_{|z|=R}\frac{f_\lambda(z)}{z}\,dz,
\]
valid for all $R$ large enough so that $\Delta\subset D(0,R)$.

Next, we claim that there is a simple \emph{a priori} bound on $|A_\lambda|$, namely
\begin{equation}\label{E:aprioribound}
|A_\lambda|\le \pi c(\Delta)^2 \quad(\lambda\in \DD).
\end{equation}
Here $c(\Delta)$ is the logarithmic capacity of $\Delta$, see e.g. \cite[Chapter 5]{Ra95} for the definition. Indeed, since $f_\lambda$ is a conformal homeomorphism of $\CC\setminus\Delta$ onto $\CC\setminus f_\lambda(\Delta)$ satisfying $f_\lambda(z)=z+o(1)$ at infinity, the sets $\Delta$ and $f_\lambda(\Delta)$ have the same logarithmic capacity:
\[
c(f_\lambda(\Delta))=c(\Delta) \quad(\lambda\in\DD),
\]
by \cite[Theorem 5.2.3]{Ra95}. 
From the isoperimetric inequality for logarithmic capacity (\cite[Theorem 5.3.5]{Ra95}) we have
$|f_\lambda(\Delta)|\le \pi c(f_\lambda(\Delta))^2$,  and it follows that
\[
|A_\lambda|=|f_\lambda(A)|\le |f_\lambda(\Delta)|\le \pi c(f_\lambda(\Delta))^2=\pi c(\Delta)^2,
\]
as claimed.

We now turn to the proof of part~(i) of the theorem. Suppose first that
$A$ is compact and that there exists an open neighbourhood $U$ of $A$
such that $\mu_{f_\lambda}\equiv0$ on $U$ for all $\lambda\in\DD$.
Then each $f_\lambda$ is a conformal mapping on $U$, so $f_\lambda'(z)\ne0$ for all $z\in U$.
By the standard Jacobian formula for area, we have
\[
|A_\lambda|=|f_\lambda(A)|=\int_A |f_\lambda'(z)|^2\,dm(z),
\]
where $dm$ denotes area measure on $\CC$. Using Lemma~\ref{L:implicit2}, we can write
$\log|A_\lambda|$ as
\[
\log|A_\lambda|=\sup_p\Bigl\{\int_A p(z)\log\Bigl(\frac{|f_\lambda'(z)|^2}{p(z)}\Bigr)\,dm(z)\Bigr\},
\]
where the supremum is taken over all continuous functions $p:A\to(0,\infty)$ such that $\int_A p\,dm=1$.
By Lemma~\ref{L:harmonic},
each of the integrals is a harmonic function of $\lambda\in \DD$. Therefore $\log(C/|A_\lambda|)$ is
an inf-harmonic function on $\DD$ for each $C\ge \sup_{\lambda\in\DD}|A_\lambda|$,
in particular for $C=\pi c(\Delta)^2$, by \eqref{E:aprioribound}.

Suppose now that $A$ is merely Borel, but still that $\mu_{f_\lambda}\equiv0$ on $U$ for all $\lambda\in\DD$.
We have
\[
\log \Bigl(\frac{\pi c(\Delta)^2}{|A_\lambda|}\Bigr)=\inf_{F}\log\Bigl(\frac{\pi c(\Delta)^2}{|F_\lambda|}\Bigr) \quad (\lambda\in\DD),
\]
where the infimum is taken over all compact subsets $F$ of $A$.
Each function on the right-hand side is inf-harmonic on $\DD$,
by what we have already proved. Therefore the left-hand side is inf-harmonic on $\DD$ as well.

Finally, suppose merely that $\mu_{f_\lambda}=0$ a.e.\ on $A$ for each $\lambda\in \DD$.
Let $U_n$ be a deceasing sequence of bounded open sets such that $|U_n\setminus A|\to0$.
By Theorem \ref{T:motions}, for each $n$ there exists a holomorphic motion $f_n:\DD\times\CC\to\CC$ such that,
for each $\lambda\in\DD$, we have
$\mu_{f_{n,\lambda}}=1_{\CC\setminus U_n}\mu_{f_\lambda}$ a.e.\ on $\CC$
and $f_{n,\lambda}(z)=z+o(1)$ near $\infty$.
By Lemma~\ref{L:qcbounds}, it follows that
\[
\|\partial_zf_{n,\lambda}-\partial_z f_\lambda\|_{L^2(\CC)}\to0
\quad\text{and}\quad
 \|\partial_{\overline{z}}f_{n,\lambda}-\partial_{\overline{z}} f_\lambda\|_{L^2(\CC)}\to0.
\]
Therefore
\[
\int_A |\partial_zf_\lambda|^2\,dm=\lim_{n\to\infty}\int_A |\partial_zf_{n,\lambda}|^2\,dm=\lim_{n\to\infty}|f_{n,\lambda}(A)|
\]
and
\[
\int_A |\partial_{\overline{z}}f_\lambda|^2\,dm=\lim_{n\to\infty}\int_A |\partial_{\overline{z}}f_{n,\lambda}|^2\,dm=0.
\]
Hence, using \cite[formula (2.24)]{AIM09}, we obtain
\[
|f_\lambda(A)|=\int_A(|\partial_z f_\lambda|^2-|\partial_{\overline{z}}f_\lambda|^2)\,dm
=\lim_{n\to\infty}|f_{n,\lambda}(A)|.
\]
Thus
\[
\log\Bigl(\frac{\pi c(\Delta)^2}{|A_\lambda|}\Bigr)=
\lim_{n\to\infty}\log\Bigl(\frac{\pi c(\Delta)^2}{|f_{n,\lambda}(A)|}\Bigr)
\quad(\lambda\in\DD).
\]
By what we have already proved, the right-hand sides are inf-harmonic functions of $\lambda$.
It follows from Proposition~\ref{P:closure} part~\ref{I:limit} that the left-hand side is inf-harmonic on $\DD$ as well.
This completes the proof of part~(i) of the theorem.

We now turn to the proof of part~(ii).
Set $R_0:=\sup_{z\in\Delta}|z|$ and, for $R>R_0$, set $\Delta_R:=\overline{D}(0,R)$
(so $c(\Delta_R)=R$). By hypothesis $\mu_{f_\lambda}=0$ a.e.\ on $\Delta_R\setminus A$.
So, applying what we have proved in part~(i) (with $\Delta$ replaced by $\Delta_R$ and
$A$ replaced by $\Delta_R\setminus A$),
we see that
\[
\lambda\mapsto\log\Bigl(\frac{\pi R^2}{|f_\lambda(\Delta_R\setminus A)|}\Bigr)
\]
is an inf-harmonic function on $\DD$.
Now, fix $\lambda\in\DD$. Then $|f_\lambda(\Delta_R\setminus A)|=|f_\lambda(\Delta_R)|-|A_\lambda|$,
and by the area theorem from univalent function theory,
\[
|f_\lambda(\Delta_R)|=\pi R^2-\pi\sum_{n\ge1}n|a_n(\lambda)|^2R^{-2n},
\]
where $f_\lambda(z)=z+\sum_{n\ge1}a_n(\lambda)z^{-n}$ is the Laurent expansion of $f_\lambda$ near infinity.
In particular, $|f_\lambda(\Delta_R)|=\pi R^2+O(R^{-2})$ as $R\to\infty$. Hence
\[
\log\Bigl(\frac{\pi R^2}{|f_\lambda(\Delta_R\setminus A)|}\Bigr)
=\log\Bigl(\frac{\pi R^2}{\pi R^2-|A_\lambda|+O(R^{-2})}\Bigr)=\frac{|A_\lambda|}{\pi R^2}+O(R^{-4})\quad(R\to\infty).
\]
It follows that
\[
|A_\lambda|=\lim_{R\to\infty}\pi R^2\log\Bigl(\frac{\pi R^2}{|f_\lambda(\Delta_R\setminus A)|}\Bigr)
\quad(\lambda\in\DD).
\]
By what we have shown earlier,
the right-hand sides are inf-harmonic functions of $\lambda$.
It follows from Proposition~\ref{P:closure} part~\ref{I:limit} that the left-hand side is inf-harmonic on $\DD$ as well.
This completes the proof of part~(ii) of the theorem.
\end{proof}


\section{Applications to quasiconformal maps}\label{S:quasiconformal}

In this section we show how our results
lead to a unified approach to the four theorems on quasiconformal distortion of area and dimension
that were stated at the end of the introduction.

\subsection{Distortion of dimension by quasiconformal maps}

In this subsection we establish Theorem~\ref{T:qcdistortion},
to the effect that, if $F:\CC\to\CC$ is a $k$-quasiconformal homeomorphism and $\dim A>0$, then
\begin{equation}\label{E:qcfdistortion}
\frac{1}{K}\Bigl(\frac{1}{\dim A}-\frac{1}{2}\Bigr)
\le \Bigl(\frac{1}{\dim F(A)}-\frac{1}{2}\Bigr)\le
K\Bigl(\frac{1}{\dim A}-\frac{1}{2}\Bigr),
\end{equation}
where $K=(1+k)/(1-k)$. Here
$\dim$ denotes any one of $\dim_P,\dim_H$ or $\overline{\dim}_M$. In the case of $\overline{\dim}_M$, we also suppose that the set $A$ is bounded.

\begin{proof}[Proof of Theorem~\ref{T:qcdistortion}]
By Theorem~\ref{T:embedding},
there exists a holomorphic motion $f:\DD\times\CC\to\CC$
such that $f_{k}=F$.
For $\lambda\in\DD$, set $A_\lambda:=f_\lambda(A)$.
By Theorems~\ref{T:Minkowski}, \ref{T:packing} and  \ref{T:Hausdorff},
either $\lambda\mapsto(1/\dim(A_\lambda)-1/2)$
is an inf-harmonic function on $\DD$, or, at the very least, it is a
supremum of inf-harmonic functions.
Either way, it satisfies Harnack's inequality,
so, for all $\lambda\in\DD$, we have
\[
\frac{1-|\lambda|}{1+|\lambda|}\Bigl(\frac{1}{\dim(A_0)}-\frac{1}{2}\Bigr)
\le \Bigl(\frac{1}{\dim(A_\lambda)}-\frac{1}{2}\Bigr)\le
\frac{1+|\lambda|}{1-|\lambda|}\Bigl(\frac{1}{\dim(A_0)}-\frac{1}{2}\Bigr).
\]
In particular, taking $\lambda=k$, we obtain
\eqref{E:qcfdistortion}.
\end{proof}

\begin{remark}
One consequence of Theorem~\ref{T:qcdistortion}
is that, if $f:\DD\times A\to\CC$ is a holomorphic motion and $A_\lambda=f_\lambda(A)$,
then the map $\lambda\mapsto\dim(A_\lambda)$
is a continuous function. For the Minkowski and packing dimensions, this was also proved in Corollary~\ref{C:subharmonic}.
For all three notions of dimension, it can also be seen
more directly as follows.

As $\lambda\to\lambda_0\in\DD$,
the transition map
$f_\lambda\circ f_{\lambda_0}^{-1}$
is $k$-quasiconformal with $k$ tending to $0$,
hence also H\"older-continuous
with H\"older exponent tending to~$1$
(see \cite[Theorem~12.2.3 and Corollary~3.10.3]{AIM09}).
Thus $\dim(A_\lambda)=\dim(f_\lambda\circ f_{\lambda_0}^{-1})(A_{\lambda_0})\to\dim(A_{\lambda_0})$ as $\lambda\to\lambda_0$.
\end{remark}

\subsection{Distortion of area by quasiconformal maps}

\begin{proof}[Proof of Theorem \ref{T:areadistortion}]
Let $F:\CC\to\CC$ be a $k$-quasiconformal homeomorphism which is conformal on $\CC\setminus\Delta$,
where $\Delta$ is a compact set of logarithmic capacity at most~$1$, and such that $F(z)=z+o(1)$ near $\infty$. Let $A$ be a Borel subset of $\Delta$.

Let $k:=(K-1)/(K+1)$. By Theorem \ref{T:motions}, there is a holomorphic motion $f:\DD\times\CC\to\CC$ with $f_k=F$ and $\mu_{f_\lambda}=(\lambda/k)\mu_F$ for each $\lambda \in \mathbb{D}$. We may also require that $f_\lambda(z)=z+o(1)$ near $\infty$.

Suppose first that $\mu_F = 0$ a.e. on $A$. By Theorem~\ref{T:Area}(i), the function $\lambda\mapsto \log(\pi/|A_\lambda|)$ is inf-harmonic on $\DD$.
In particular, it satisfies Harnack's inequality there:
\[
\log\Bigl(\frac{\pi}{|A_\lambda|}\Bigr)\ge \frac{1-|\lambda|}{1+|\lambda|}\log\Bigl(\frac{\pi}{|A|}\Bigr)
\quad(\lambda\in\DD).
\]
Setting $\lambda=k$, we obtain
\[
\log\Bigl(\frac{\pi}{|F(A)|}\Bigr)\ge \frac{1}{K}\log\Bigl(\frac{\pi}{|A|}\Bigr).
\]
This proves (i).

Suppose instead that $\mu_F=0$ a.e. on $\CC \setminus A$. By Theorem~\ref{T:Area}(ii), the function $\lambda\mapsto|A_\lambda|$ is inf-harmonic on $\DD$.
In particular, it satisfies Harnack's inequality there:
\[
|A_\lambda|\le \frac{1+|\lambda|}{1-|\lambda|}|A|
\quad(\lambda\in\DD).
\]
Setting $\lambda=k$, we obtain
\[
|F(A)|\le K|A|.
\]
This proves (ii).

Finally, the general case (iii) is deduced from (i) and (ii) via a standard factorization process, see e.g. \cite[Theorem~13.1.4]{AIM09}.
\end{proof}

\begin{remark}
As mentioned in \S \ref{S:Area}, our proof of part~(i) of Theorem~\ref{T:areadistortion} is quite similar to the original proof of Eremenko and Hamilton \cite{EH95}, as presented in \cite[\S13.1]{AIM09}. On the other hand, our proof of part~(ii) is completely different from (and rather simpler than) the methods used in \cite{EH95} and \cite{AIM09}.
\end{remark}

\subsection{Symmetric holomorphic motions and inf-sym-harmonic functions}

In preparation for the proofs of Theorems~\ref{T:Smirnov} and
\ref{T:PrauseSmirnov},
we study what can be said about the function $1/\dim(A_\lambda)$ when $A$ is a subset of $\RR$ and $f:\mathbb{D} \times \CC \to \CC$ is a holomorphic motion that is symmetric in the sense defined below.

\begin{definition}\label{D:symmetricmotion}
We say that a holomorphic motion $f:\mathbb{D} \times \CC \to \CC$ is \textit{symmetric} if
\[
f_\lambda(z) = \overline{f_{\overline{\lambda}}(\overline{z})} \quad (\lambda \in \mathbb{D}, \,z \in \CC).
\]
\end{definition}

\begin{definition}
We say that a harmonic function $h:\mathbb{D} \to \RR$ is \textit{symmetric} if $h(\overline{\lambda})=h(\lambda)$ for all $\lambda \in \mathbb{D}$. A function $u:\mathbb{D} \to [0,\infty)$ is \textit{inf-sym-harmonic} if there is a family $\mathcal{H}$ of symmetric harmonic functions on $\mathbb{D}$ such that
\[
u(\lambda) = \inf_{h \in \mathcal{H}} h(\lambda) \qquad (\lambda \in \mathbb{D}).
\]
\end{definition}

We  now state  symmetric versions of Lemmas~\ref{L:heartM}
and \ref{L:heartH}.

\begin{lemma}\label{L:heartMsymmetric}
Let $f: \mathbb{D} \times \CC \to \CC$ be a symmetric holomorphic motion and let
 $A$ be a bounded subset of $\RR$ with $\overline{\dim}_M(A)>0$. Set $A_\lambda:=f_\lambda(A)$.
 Then there exists an inf-sym-harmonic function $u$ on $\mathbb{D}$ such that
\[
u(0)=1/\overline{\dim}_M(A)
\qquad\text{and}\qquad
u(\lambda)\ge 1/\overline{\dim}_M(A_\lambda)\quad(\lambda \in \mathbb{D}).
\]
\end{lemma}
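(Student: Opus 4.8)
The plan is to mimic the proof of Lemma~\ref{L:heartM} almost verbatim, keeping track of the extra symmetry hypothesis at each stage. The key new observation is that the symmetry of $f$ allows us to build \emph{symmetric} harmonic functions instead of merely harmonic ones: if $z,w\in\RR$ with $z\ne w$, then $f_{\overline\lambda}(z)=\overline{f_\lambda(\overline z)}=\overline{f_\lambda(z)}$ and similarly for $w$, so $|f_{\overline\lambda}(z)-f_{\overline\lambda}(w)|=|f_\lambda(z)-f_\lambda(w)|$; consequently the positive harmonic function $\lambda\mapsto\log(M/|f_\lambda(z)-f_\lambda(w)|)$ is symmetric in the sense that it is invariant under $\lambda\mapsto\overline\lambda$. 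Hence the inf-harmonic function $\lambda\mapsto\log(M/\diam f_\lambda(S))$ furnished by Lemma~\ref{L:diameter} is in fact inf-sym-harmonic whenever $S\subset\RR$.

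First I would record a ``symmetric'' version of the auxiliary machinery. The family of inf-sym-harmonic functions on $\DD$ is still an inf-cone (closed under nonnegative linear combinations and arbitrary infima), since symmetry is preserved under these operations; likewise Harnack's inequality (Proposition~\ref{P:Harnackineq}) and the normal-family property (Proposition~\ref{P:normalfamily}) go through unchanged, because a locally uniform limit of symmetric harmonic functions is symmetric harmonic, and the implicit-function result (Corollary~\ref{C:implicit}, via Lemma~\ref{L:implicit} and Theorem~\ref{T:implicit}) also respects symmetry: in the proof of Lemma~\ref{L:implicit}, if each $u_j$ lies in the symmetric inf-cone then so does the resulting $v$. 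These are all routine verifications that I would state as a remark rather than belabour.

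Then I would run the proof of Lemma~\ref{L:heartM} line by line. Fix $\rho\in(0,1)$; choose $(d_n)$ with $0<d_n<\overline{\dim}_M(A)$, $d_n\to\overline{\dim}_M(A)$; by Proposition~\ref{P:Tricot} pick finite families $\cD_n$ of disjoint disks with centres in $A\subset\RR$ satisfying \eqref{E:limits}. Let $B$ be the union of all these disks (a bounded subset of $\CC$, though not of $\RR$ — this is fine, only the \emph{centres} need to be real). The point is that each disk $D\in\cD_n$ has a real centre, so $\diam f_\lambda(D)$ is comparable to $|f_\lambda(\text{centre})-f_\lambda(w)|$ for boundary points $w$, and... actually cleaner: I apply Lemma~\ref{L:diameter} directly with $S=D$, noting that by symmetry $\diam f_{\overline\lambda}(D)$ need not equal $\diam f_\lambda(D)$ in general (since $D\not\subset\RR$), so I should instead work with the quantity $\rho(D):=$ radius of the largest disk centred at $f_\lambda(\text{centre of }D)$ inside $f_\lambda(D)$, which by Corollary~\ref{C:quasisym}(i) is $\ge\delta\diam f_\lambda(D)$. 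Hmm, this needs care. The robust fix: replace each disk $D\in\cD_n$ of centre $x_D\in\RR$ and radius $r_D$ by the pair of real points $x_D\pm r_D\in\RR$, and track $|f_\lambda(x_D+r_D)-f_\lambda(x_D-r_D)|$, whose logarithm-ratio to $M$ \emph{is} symmetric harmonic; this quantity is comparable (up to the quasisymmetry constant $\eta(1)$ from Theorem~\ref{T:quasisym}) to $\diam f_\lambda(D)$, and comparable to $2r_D=\diam D$ at $\lambda=0$. Define $s_n(\lambda)$ as the solution of $\sum_{D\in\cD_n}(|f_\lambda(x_D+r_D)-f_\lambda(x_D-r_D)|/M)^{s_n(\lambda)}=\sum_{D\in\cD_n}(\diam D/M)^{d_n}$; then $s_n(0)=d_n$, $1/s_n$ is inf-sym-harmonic by the symmetric Corollary~\ref{C:implicit}, and a subsequence converges locally uniformly to an inf-sym-harmonic $u$ on $D(0,\rho)$ with $u(0)=1/\overline{\dim}_M(A)$. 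The lower bound $u(\lambda)\ge1/\overline{\dim}_M(A_\lambda)$ follows exactly as in Lemma~\ref{L:heartM}: for $c<1/u(\lambda)$ one gets $\sum_{D\in\cD_n}(\diam f_\lambda(D))^c\to\infty$ after absorbing bounded multiplicative constants, and then invokes Corollary~\ref{C:quasisym}(i) to extract disjoint disks with centres in $A_\lambda$ and comparable diameters, concluding $\overline{\dim}_M(A_\lambda)\ge c$. Finally one lets $\rho\to1$ using the symmetric normal-family property, exactly as at the end of the proof of Lemma~\ref{L:heartM}.

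The main obstacle is the one flagged above: the disks in $\cD_n$ have real centres but are not subsets of $\RR$, so the symmetry $f_\lambda(z)=\overline{f_{\overline\lambda}(\overline z)}$ does not directly make $\lambda\mapsto\log(M/\diam f_\lambda(D))$ symmetric. The clean resolution is to phrase everything in terms of distances between pairs of \emph{real} points (e.g.\ the endpoints $x_D\pm r_D$ of the real diameter of $D$), for which the symmetry identity applies verbatim, and then to pass back and forth between these real-point distances and the diameters $\diam f_\lambda(D)$ at the cost of the fixed quasisymmetry constant $\eta(1)$ — a multiplicative constant that is harmless for a dimension computation since it contributes only an additive $O(1)$ in the exponent that washes out in the limit \eqref{E:limits}. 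With this device in place, every other step is a routine transcription of the proof of Lemma~\ref{L:heartM} with ``inf-harmonic'' replaced by ``inf-sym-harmonic'' throughout.
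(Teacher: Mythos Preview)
Your proposal is correct and follows essentially the same route as the paper. The paper's proof is phrased slightly more cleanly: rather than tracking the two endpoints $x_D\pm r_D$, it simply replaces each occurrence of $f_\lambda(D)$ in the proof of Lemma~\ref{L:heartM} by $f_\lambda(D\cap\RR)$, noting that $\diam(D\cap\RR)=\diam D$ at $\lambda=0$ and that $D\cap\RR\subset\RR$ makes $\lambda\mapsto\log(M/\diam f_\lambda(D\cap\RR))$ inf-sym-harmonic---this is exactly the ``work with real subsets'' fix you arrived at, and the trivial inequality $\diam f_\lambda(D\cap\RR)\le\diam f_\lambda(D)$ is all that is needed to feed into Corollary~\ref{C:quasisym}(i) for the packing step, so the explicit quasisymmetry comparison you invoke is not actually required.
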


\begin{lemma}\label{L:heartHsymmetric}
Let $f: \mathbb{D} \times \CC \to \CC$ be a symmetric holomorphic motion and let
 $A$ be a subset of $\RR$ with $\dim_H(A)>0$. Set $A_\lambda:=f_\lambda(A)$.
 Then there exists an inf-sym-harmonic function $u$ on $\mathbb{D}$ such that\[
u(0)=1/\dim_H(A)
\quad\text{and}\quad
1/2\le u(\lambda)\le1/\dim_H(A_\lambda)\quad(\lambda\in\DD).
\]
\end{lemma}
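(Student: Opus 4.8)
The plan is to mimic the proof of Lemma~\ref{L:heartH} (equivalently Lemma~\ref{L:heartHsymmetric}) verbatim, and simply observe that every harmonic function constructed along the way is automatically symmetric, so that the final inf-harmonic function is in fact inf-sym-harmonic. First I would run the construction on a disk $D(0,\rho)$ with $\rho\in(0,1)$: choose $d_n\downarrow\dim_H(A)$ with $\dim_H(A)<d_n<2$, and pick disjoint dyadic square covers $\cQ_n$ of $A$ satisfying \eqref{E:limitsQ}. Because $A\subset\RR$ and $f$ is symmetric, the key point is that for each square $Q$ meeting $A$, the function $\lambda\mapsto\log(M/\diam f_\lambda(Q))$ is not merely inf-harmonic but the infimum of \emph{symmetric} harmonic functions: indeed $\diam f_\lambda(Q)$ is the supremum over pairs $z,w$ of vertices (or rather points of $Q$) of $|f_\lambda(z)-f_\lambda(w)|$, and each $\lambda\mapsto\log(M/|f_\lambda(z)-f_\lambda(w)|)$ with $z,w$ ranging over a set of points closed under conjugation satisfies, by the symmetry relation $f_\lambda(z)=\overline{f_{\overline\lambda}(\overline z)}$, the identity $|f_\lambda(z)-f_\lambda(w)|=|f_{\overline\lambda}(\overline z)-f_{\overline\lambda}(\overline w)|$, so the collection of these harmonic functions is invariant under $\lambda\mapsto\overline\lambda$ and hence the diameter—being the sup over a conjugation-invariant index set—gives a symmetric function; its logarithm is the infimum of a conjugation-invariant family of positive harmonic functions, which after symmetrization (replace each $h$ by $\tfrac12(h(\lambda)+h(\overline\lambda))$, still harmonic, still $\ge u$, still realizing the value at real points in the limit) exhibits it as inf-sym-harmonic.

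Granting that, I would define $s_n(\lambda)$ exactly as in Lemma~\ref{L:heartH} via the implicit formula with constant $C\ge1/\delta'^2$, and invoke the implicit function theorem, Theorem~\ref{T:implicit}: since each $\log(M/\diam f_\lambda(Q))$ is inf-sym-harmonic and the class of inf-sym-harmonic functions is again an inf-cone closed under the operations in Lemma~\ref{L:implicit} (symmetry is preserved by nonnegative linear combinations and by infima, and the affine minorants in the proof of Lemma~\ref{L:implicit} can be taken symmetric because $\phi_j$ acts on an already-symmetric argument), the conclusion is that $1/s_n$ is inf-sym-harmonic on $D(0,\rho)$. A symmetric version of the normal-family Proposition~\ref{P:normalfamily} then yields a locally uniform subsequential limit $s$ with $1/s$ inf-sym-harmonic — and here one only needs to note that local uniform limits of symmetric harmonic functions are symmetric, so the limit produced in the proof of Proposition~\ref{P:normalfamily} stays inside the symmetric class. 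The estimates $s(0)\le\dim_H(A)$, $s(\lambda)\ge\dim_H(A_\lambda)$, and $s(\lambda)\le2$ are proved exactly as in Lemma~\ref{L:heartH}, using Corollary~\ref{C:quasisym} and the area bound; none of these arguments interacts with symmetry, since $f_\lambda$ is still $\rho$-quasiconformal by Theorem~\ref{T:motions}. Setting $u:=1/s$ gives the desired inf-sym-harmonic function on $D(0,\rho)$ with $u(0)=1/\dim_H(A)$ and $1/2\le u(\lambda)\le 1/\dim_H(A_\lambda)$.

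Finally I would pass from $D(0,\rho)$ to $\DD$ by the same device as at the end of the proof of Lemma~\ref{L:heartM}: take $\rho_m\uparrow1$, obtain inf-sym-harmonic $u_m$ on $D(0,\rho_m)$ with the stated properties, and extract a locally uniformly convergent subsequence using the symmetric normal-family result; the limit $u$ is inf-sym-harmonic on $\DD$ and inherits $u(0)=1/\dim_H(A)$ and $1/2\le u(\lambda)\le1/\dim_H(A_\lambda)$ by continuity.

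I expect the only real obstacle to be bookkeeping rather than mathematics: namely, establishing once and for all that the family of inf-sym-harmonic functions enjoys the same closure and normal-family properties (Propositions~\ref{P:Harnackineq}, \ref{P:normalfamily}, \ref{P:closure}, Theorem~\ref{T:implicit}) as the family of inf-harmonic functions. Each of these is immediate once one observes that the symmetrization operator $h\mapsto\tfrac12(h(\lambda)+h(\overline\lambda))$ maps positive harmonic functions to symmetric positive harmonic functions without decreasing their values at real points in the relevant limiting sense, and that all the constructions in \S\ref{S:infharmonic} go through verbatim within the symmetric subclass. Once that ``symmetric'' version of \S\ref{S:infharmonic} is in hand, the proof of Lemma~\ref{L:heartHsymmetric} is word-for-word that of Lemma~\ref{L:heartH}, and Lemma~\ref{L:heartMsymmetric} is similarly the symmetric analogue of Lemma~\ref{L:heartM}.
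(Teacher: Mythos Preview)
Your overall strategy is right, and the observation that the inf-sym-harmonic functions form an inf-cone (so that Lemma~\ref{L:implicit} and the normal-family argument go through verbatim within the symmetric class) is exactly what the paper uses. There is, however, a genuine gap at the very first step. A dyadic square $Q$ that meets $A\subset\RR$ necessarily has the form $[m2^{-k},(m+1)2^{-k})\times[0,2^{-k})$, which lies in the closed upper half-plane and is \emph{not} invariant under complex conjugation. Consequently the family $\{\log(M/|f_\lambda(z)-f_\lambda(w)|):z,w\in Q\}$ is not conjugation-invariant, and the function $\lambda\mapsto\log(M/\diam f_\lambda(Q))$ is in general \emph{not} symmetric: one only has $\diam f_\lambda(Q)=\diam f_{\overline\lambda}(\overline Q)$ with $\overline Q\ne Q$. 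Your proposed symmetrization remedy fails for the same reason: if $u(\lambda):=\log(M/\diam f_\lambda(Q))$ is not symmetric, then for a harmonic majorant $h\ge u$ the symmetrized function $\tfrac12(h(\lambda)+h(\overline\lambda))$ is only guaranteed to majorize $\tfrac12(u(\lambda)+u(\overline\lambda))$, not $u$ itself; and even if one passes to the symmetric function $\min(u(\lambda),u(\overline\lambda))$, it is not at all clear that a symmetric inf-harmonic function is automatically inf-sym-harmonic.

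The paper repairs this by a simple but essential modification: replace every occurrence of $f_\lambda(Q)$ by $f_\lambda(Q\cap\RR)$. Since $Q\cap\RR\subset\RR$, each pair $z,w\in Q\cap\RR$ is real, and the symmetry relation $f_\lambda(z)=\overline{f_{\overline\lambda}(\overline z)}$ gives $|f_\lambda(z)-f_\lambda(w)|=|f_{\overline\lambda}(z)-f_{\overline\lambda}(w)|$, so each individual harmonic function $\log(M/|f_\lambda(z)-f_\lambda(w)|)$ is genuinely symmetric. Hence $\log(M/\diam f_\lambda(Q\cap\RR))$ is inf-sym-harmonic, and the implicit-function and normal-family machinery then runs exactly as you outline. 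Intersecting with $\RR$ loses no covering information because $A\subset\RR$, and the area bound yielding $s(\lambda)\le2$ still goes through since $\diam f_\lambda(Q\cap\RR)\le\diam f_\lambda(Q)$ while the sets $f_\lambda(Q)$ remain disjoint.
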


\begin{proof}
The proofs follow closely those of Lemmas~\ref{L:heartM} and \ref{L:heartH}, with the following differences:
\begin{itemize}
\item If $S\subset\RR$, then the function
$\log(M/\diam f_\lambda(S))$
defined in  Lemma \ref{L:diameter}
is inf-sym-harmonic on $D(0,\rho)$.
This can be seen directly from the formula
\[
\log\Bigl(\frac{M}{\diam f_\lambda(S)}\Bigr)=
\inf\Bigl\{\log\Bigl(\frac{M}{|f_\lambda(z)-f_\lambda(w)|}\Bigr):z,w\in S,\, z\ne w\Bigr\},
\]
using the symmetry relation $f_\lambda(z) = \overline{f_{\overline{\lambda}}(\overline{z})}$.
\item Consequently, if we replace the occurrences of $f_\lambda(D)$ and $f_\lambda(Q)$ in the proofs of  Lemmas~\ref{L:heartM} and \ref{L:heartH}
 by $f_\lambda(D\cap\RR)$
and $f_\lambda(Q\cap\RR)$ respectively,
then all the functions that were previously inf-harmonic
are now inf-sym-harmonic. Intersecting with $\RR$ leads to
no loss of information about $A$, since $A\subset\RR$.
\item When applying the implicit function theorem or its corollary
(Theorem~\ref{T:implicit} and Corollary~\ref{C:implicit}),
it is now assumed that the functions $\log(1/a_j)$ are inf-sym-harmonic,
and the conclusion is now that $1/s$ is inf-sym-harmonic
(or $s\equiv0$).
This follows by applying Lemma~\ref{L:implicit},
taking $\cU$ to be the inf-cone of inf-sym-harmonic functions.\qedhere
\end{itemize}
\end{proof}


\subsection{Dimension of quasicircles}

In this subsection, we establish Theorem~\ref{T:Smirnov}. More precisely, we use Lemma~\ref{L:heartHsymmetric} to show that the Hausdorff dimension of a $k$-quasicircle is at most $1+k^2$.

\begin{definition}\label{D:quasicircle}
Let $k\in[0,1)$.
A curve $\Gamma$ in $\CC$ is a $k$-\textit{quasicircle} if $\Gamma=g(\RR)$, where $g:\CC\to\CC$ is a normalized $k$-quasiconformal homeomorphism. By normalized, we mean simply that $g$ fixes $0$ and $1$.
\end{definition}

Quasicircles have been studied extensively over the years because of the desirable function-theoretic properties of the domains that they bound, see e.g.\ \cite{Ge12}. In particular, the problem of finding upper bounds for the Hausdorff dimension of a $k$-quasicircle in terms of $k$ has attracted much interest.
Theorem~\ref{T:qcdistortion} implies that if $\Gamma$ is a $k$-quasicircle, then
\[
\dim_H(\Gamma) \le 1+k.
\]
Motivated by examples of Becker and Pommerenke \cite{BP87}, Astala asked in \cite{As94} whether the upper bound can be replaced by $1+k^2$. This was answered in the affirmative by Smirnov in \cite{Sm10}. As we will now see, Astala's question can also be answered using inf-harmonic functions.

We first need a result on symmetrization of Beltrami coefficients due to Smirnov \cite[Theorem 4]{Sm10}. See also \cite[\S13.3.1]{AIM09}.

\begin{lemma}\label{L:SymBeltrami}
The function $g$ in Definition~\ref{D:quasicircle} may be chosen so
that, in addition, its Beltrami coefficient satisfies the antisymmetry relation
\begin{equation}\label{E:antisym}
\overline{\mu_g(\overline{z})} = -\mu_g(z) \quad  \text{a.e.\ in $\CC$}.
\end{equation}
\end{lemma}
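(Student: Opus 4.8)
This is Smirnov's \cite[Theorem~4]{Sm10} (see also \cite[\S13.3.1]{AIM09}); I sketch the idea I would follow. The plan is to deduce it from the measurable Riemann mapping theorem (Theorem~\ref{T:MRMT}): rather than trying to symmetrise the Beltrami coefficient of a given representing map by a formal manipulation, I would search directly for the \emph{correct} antisymmetric coefficient that reproduces $\Gamma$.

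First I would fix the geometry. Possibly after a normalising real M\"obius change of coordinates fixing $0$ and $1$ (which preserves the normalisation, and, being real, the antisymmetry relation, and which can be undone at the end), I may assume $\Gamma$ is a bounded Jordan curve, say with complementary domains $\Omega^+,\Omega^-$ corresponding to $\mathbb H^+,\mathbb H^-$ under a representing map. Then, for each $\nu\in L^\infty(\mathbb H^+)$ with $\|\nu\|_\infty\le k$, I would form its \emph{antisymmetric extension} $\hat\nu$ to $\CC$, namely $\hat\nu(z):=\nu(z)$ for $z\in\mathbb H^+$ and $\hat\nu(z):=-\overline{\nu(\overline z)}$ for $z\in\mathbb H^-$; one checks at once that $\|\hat\nu\|_\infty\le k$ and that $\overline{\hat\nu(\overline z)}=-\hat\nu(z)$ a.e. Solving the Beltrami equation with coefficient $\hat\nu$ (Theorem~\ref{T:MRMT}) yields a normalized $k$-quasiconformal map $g_\nu$, and $g_\nu(\RR)$ is automatically a $k$-quasicircle. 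The whole lemma then reduces to producing a single $\nu$ with $g_\nu(\mathbb H^+)=\Omega^+$, equivalently $g_\nu(\RR)=\Gamma$: the map $g:=g_\nu$ will then have all the required properties, since $\mu_g=\hat\nu$.

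This last existence statement is the heart of the matter, and the step I expect to be the real obstacle; it is a genuine theorem, not a computation. In particular one \emph{cannot} simply take $\nu$ to be the Beltrami coefficient, restricted to $\mathbb H^+$, of an arbitrary given representing map: replacing the coefficient on $\mathbb H^-$ by its antisymmetric reflection alters the conformal welding of the two sides of the curve, hence the curve itself. The way I would handle it is to encode the requirement $g_\nu(\RR)=\Gamma$ as a fixed-point equation on the ball $B:=\{\nu\in L^\infty(\mathbb H^+):\|\nu\|_\infty\le k\}$: reading off the welding homeomorphism of $g_\nu(\RR)$, comparing it with the (known) welding of $\Gamma$, and feeding the comparison back through a quasiconformal extension operator (Beurling--Ahlfors or Douady--Earle) defines a map $T:B\to B$ whose fixed points are exactly the admissible choices of $\nu$. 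Since solutions of the Beltrami equation depend continuously --- indeed holomorphically --- on the coefficient, which is precisely the mechanism underlying Theorem~\ref{T:motions}, the map $T$ is continuous for the weak-$*$ topology, on which $B$ is compact and convex, so a Schauder-type fixed-point theorem supplies the required $\nu$; alternatively, one can extract $\nu$ from Ahlfors's quasiconformal reflection in $\Gamma$. Finally I would undo the initial M\"obius change of coordinates by post-composition, which leaves both $\|\mu_g\|_\infty\le k$ and the relation $\overline{\mu_g(\overline z)}=-\mu_g(z)$ intact and (as the M\"obius map fixes $0$ and $1$) restores the normalisation, giving the map $g$ in the statement.
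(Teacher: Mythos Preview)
The paper does not give a proof of this lemma; it simply records the statement and cites Smirnov \cite[Theorem~4]{Sm10} and \cite[\S13.3.1]{AIM09}. Your opening line does exactly the same, so as a comparison with the paper there is nothing to add.

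The sketch you append, however, is not a proof, and the main gap is genuine. You correctly isolate the core problem --- produce $\nu\in L^\infty(\mathbb H^+)$ with $\|\nu\|_\infty\le k$ whose antisymmetric extension $\hat\nu$ has $g_{\hat\nu}(\RR)=\Gamma$ --- and you rightly observe that antisymmetrising the coefficient of an arbitrary representing map changes the curve. But the fixed-point step is not carried out: you never actually define $T$. ``Comparing the welding with that of $\Gamma$ and feeding the comparison back through an extension operator'' does not specify an element $T(\nu)$ of $L^\infty(\mathbb H^+)$; and even granting some definition, you give no argument that $\|T(\nu)\|_\infty\le k$. That bound is the entire quantitative content of the lemma --- an abstract fixed-point theorem on the closed ball cannot supply it for free, and a fixed point lying on the boundary $\|\nu\|_\infty=k$ would already require knowing what you are trying to prove. (Weak-$*$ continuity of $T$, needed for Schauder, is likewise unaddressed; extension operators of Beurling--Ahlfors type are not obviously weak-$*$ continuous.) Your alternative via the quasiconformal reflection is closer in spirit to the cited proofs, which are constructive rather than fixed-point, but there too the control of the constant $k$ is the delicate step and cannot be asserted without work.

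A minor side issue: the preliminary reduction to a bounded $\Gamma$ does not sit well with Definition~\ref{D:quasicircle}, since a $k$-quasiconformal homeomorphism $g:\CC\to\CC$ necessarily extends to fix $\infty$, so $g(\RR)$ is always unbounded. The reduction is in any case unnecessary, as the two complementary domains of a quasiline are perfectly well defined.
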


We will also need the following Harnack-type inequality for inf-sym-har\-monic functions, reminiscent of \cite[Lemma 7]{Sm10}. See also \cite[Lemma~13.3.8]{AIM09}.

\begin{lemma}\label{L:HarnackSym}
Let $v:\mathbb{D} \to [0,\infty)$ be an inf-sym-harmonic function. Then
\[
\frac{1-y^2}{1+y^2}v(0) \le v(iy) \le \frac{1+y^2}{1-y^2} v(0) \qquad (y \in (-1,1)).
\]
\end{lemma}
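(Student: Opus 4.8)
The plan is to mimic the proof of the ordinary Harnack inequality for inf-harmonic functions (Proposition~\ref{P:Harnackineq}), but now exploiting the extra symmetry. Since $v$ is inf-sym-harmonic, it suffices to prove the stated two-sided bound for an arbitrary \emph{symmetric} positive harmonic function $h$ on $\DD$ (with $v$ replaced by $h$), and then take the infimum over the defining family $\cH$. Indeed, if every $h\in\cH$ satisfies $\frac{1-y^2}{1+y^2}h(0)\le h(iy)\le\frac{1+y^2}{1-y^2}h(0)$, then taking infima over $h\in\cH$ at the points $0$ and $iy$ separately gives exactly the desired inequality for $v$, because $v(0)=\inf_h h(0)$ and $v(iy)=\inf_h h(iy)$. (If $v\equiv 0$ the inequality is trivial, so we may assume $v\not\equiv0$ and hence, by Proposition~\ref{P:Harnackineq}, that $v>0$ everywhere; in fact the argument below works directly with the harmonic functions and needs no positivity of $v$ beyond what the reduction gives.)

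So the core of the matter is the following claim about a single symmetric positive harmonic function $h$ on $\DD$: for $y\in(-1,1)$,
\[
\frac{1-y^2}{1+y^2}\,h(0)\le h(iy)\le \frac{1+y^2}{1-y^2}\,h(0).
\]
First I would observe that since $h$ is symmetric, i.e.\ $h(\overline\lambda)=h(\lambda)$, the value $h(iy)$ equals $h(-iy)$, so $h$ is ``balanced'' across the real axis at the conjugate pair $\pm iy$. The point is that the constant $\frac{1+y^2}{1-y^2}$ is \emph{not} the ordinary Harnack constant $\tau_\DD(0,iy)=\frac{1+|y|}{1-|y|}$ — it is strictly smaller (it equals $\tau_\DD(0,iy)^2$ divided by something, or more precisely it is the Harnack constant on the \emph{upper half-disk} for functions with Neumann-type behaviour on the diameter, reflecting across $\RR$). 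The cleanest route: pull back via the automorphism. Consider $\phi(\lambda)=\frac{\lambda - iy}{1+iy\lambda}$ or rather work with the map $\lambda\mapsto\lambda^2$-type substitution. Concretely, the symmetry of $h$ means $h$ is the real part of a holomorphic function $g$ with $g(\overline\lambda)=\overline{g(\lambda)}$, hence the Herglotz representation $h(\lambda)=\int_{\partial\DD}\frac{1-|\lambda|^2}{|\zeta-\lambda|^2}\,d\nu(\zeta)$ has a \emph{symmetric} measure $\nu$, invariant under $\zeta\mapsto\overline\zeta$. Then
\[
h(iy)=\int_{\partial\DD}\frac{1-y^2}{|\zeta-iy|^2}\,d\nu(\zeta),\qquad h(0)=\nu(\partial\DD),
\]
and pairing each $\zeta=e^{i\theta}$ with $\overline\zeta=e^{-i\theta}$ one gets
\[
\frac{1}{2}\Bigl(\frac{1}{|\zeta-iy|^2}+\frac{1}{|\overline\zeta-iy|^2}\Bigr)
=\frac{1}{2}\Bigl(\frac{1}{1-2y\sin\theta+y^2}+\frac{1}{1+2y\sin\theta+y^2}\Bigr)
=\frac{1+y^2}{(1+y^2)^2-4y^2\sin^2\theta}.
\]
Hence $\displaystyle h(iy)=\int_{\partial\DD}\frac{(1-y^2)(1+y^2)}{(1+y^2)^2-4y^2\sin^2\theta}\,d\nu$, and since $0\le 4y^2\sin^2\theta\le 4y^2$ the kernel lies between $\frac{(1-y^2)(1+y^2)}{(1+y^2)^2}=\frac{1-y^2}{1+y^2}$ and $\frac{(1-y^2)(1+y^2)}{(1-y^2)^2}=\frac{1+y^2}{1-y^2}$, giving the claimed bounds upon integrating against $\nu$ and using $h(0)=\nu(\partial\DD)$.

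The main obstacle, and the step to write carefully, is this kernel estimate: recognizing that symmetrizing the Poisson kernel in $\zeta\leftrightarrow\overline\zeta$ produces a kernel that depends on $\theta$ only through $\sin^2\theta\in[0,1]$, whose extreme values over that range give precisely the constants $\frac{1-y^2}{1+y^2}$ and $\frac{1+y^2}{1-y^2}$. Everything else is bookkeeping: the reduction from $v$ to individual $h\in\cH$ via infima, the Herglotz representation of a symmetric positive harmonic function with a symmetric representing measure, and then integration of the two-sided pointwise bound on the kernel. One should also note that the extremes are attained only in the limit (point masses at $\pm1$ for the lower bound, at $\pm i$ for the upper bound), so for a genuine $v$ the inequalities are typically strict, though we only need ``$\le$''. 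Finally, since $y\in(-1,1)$ ranges over negatives as well, note $h(iy)=h(-iy)$ by symmetry so it suffices to treat $y\in[0,1)$, and both constants are even in $y$, so no separate case is needed.
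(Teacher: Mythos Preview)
Your proof is correct. Both you and the paper reduce to proving the two-sided bound for a single symmetric positive harmonic function $h$ and then take infima over $\cH$; that reduction is identical. The difference lies in how the single-function estimate is obtained. The paper observes that symmetry gives $h(iy)=h(-iy)$, so $h(iy)$ equals the value at $iy$ of the \emph{even} harmonic function $k(\lambda):=\tfrac12(h(\lambda)+h(-\lambda))$; since $k$ is even, it factors as $k(\lambda)=l(\lambda^2)$ with $l$ positive harmonic on $\DD$, and applying the ordinary Harnack inequality to $l$ at the point $\lambda^2=-y^2$ immediately yields the constants $\frac{1\pm y^2}{1\mp y^2}$. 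Your route instead uses the Poisson--Herglotz representation with a $\zeta\mapsto\overline\zeta$-symmetric boundary measure, symmetrizes the Poisson kernel in the pair $\zeta,\overline\zeta$, and bounds the resulting kernel $\frac{(1-y^2)(1+y^2)}{(1+y^2)^2-4y^2\sin^2\theta}$ over $\sin^2\theta\in[0,1]$. The paper's squaring trick is shorter and avoids any explicit kernel computation; your argument is more hands-on but has the merit of making the extremal measures transparent (point masses at $\pm1$ and $\pm i$, as you note). The informal aside about $\tau_\DD(0,iy)^2$ is not quite right, but you abandon it in favour of the Poisson calculation, so it does no harm.
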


\begin{proof}
Write
\[
v(\lambda) = \inf_{h \in \mathcal{H}} h(\lambda) \qquad (\lambda \in \mathbb{D}),
\]
where each $h \in \mathcal{H}$ is a positive and symmetric harmonic function on $\mathbb{D}$. Fix $h \in \mathcal{H}$, and set $k(\lambda):=(h(\lambda)+h(-\lambda))/2$. Clearly $k$ is an even positive harmonic function on $\mathbb{D}$. Thus it can be written as $k(\lambda)=l(\lambda^2)$, where $l$ is a positive harmonic function on $\mathbb{D}$. Applying the standard Harnack inequality to $l$, we get
\[
\frac{1-|\lambda|^2}{1+|\lambda|^2} k(0) \le k(\lambda) \le \frac{1+|\lambda|^2}{1-|\lambda|^2} k(0) \qquad (\lambda \in \mathbb{D}).
\]
As $h$ is symmetric, we have
\[
h(iy) = \frac{h(iy)+h(-iy)}{2} = k(iy) \qquad (y \in (-1,1)).
\]
Hence
\[
\frac{1-y^2}{1+y^2} h(0) \le h(iy) \le \frac{1+y^2}{1-y^2} h(0) \qquad (\lambda \in \mathbb{D}).
\]
Taking the infimum over all $h \in \mathcal{H}$ gives the result.
\end{proof}

We can now prove the main result of this subsection.

\begin{proof}[Proof of Theorem~\ref{T:Smirnov}]
Let $\Gamma$ be a $k$-quasicircle. By Lemma~\ref{L:SymBeltrami}, we can write $\Gamma=g(\RR)$ for  some normalized $k$-quasiconformal mapping $g: \CC \to \CC$ whose Beltrami coefficient $\mu_g$ satisfies the antisymmetry relation (\ref{E:antisym}).
For $\lambda \in \mathbb{D}$, define a Beltrami coefficient $\mu_\lambda$ by
\[
\mu_\lambda:= \frac{\lambda}{ik} \mu_g,
\]
and denote by $f_\lambda : \CC \to \CC$ the unique normalized quasiconformal mapping whose Beltrami coefficient is $\mu_\lambda$, as given by Theorem \ref{T:MRMT}. Note that $f_0$ is the identity and $f_{ik}=g$. It follows from Theorem \ref{T:motions} that the maps $f_\lambda$ define a holomorphic motion of $\CC$. Moreover, we have
\[
\overline{\mu_{\overline{\lambda}}(\overline{z})} = \frac{\lambda}{-ik} \overline{\mu_g(\overline{z})} = \mu_\lambda(z)
\quad \text{a.e.\ in $\CC$}.
\]
It easily follows that the maps $f_\lambda$ inherit the same symmetry:
\[
f_\lambda(z)=\overline{f_{\overline{\lambda}}(\overline{z})} \qquad (\lambda \in \mathbb{D}, z \in \CC),
\]
see e.g.\ \cite[Section 13.3.1]{AIM09}. In other words, the holomorphic motion $f$ is symmetric in the sense of Definition \ref{D:symmetricmotion}.

Now, let $A:=\RR$. By Lemma~\ref{L:heartHsymmetric}, there is an inf-sym-harmonic function $u$ on $\mathbb{D}$ such that
\[
u(0)=1/\dim_H(A)=1
\quad\text{and}\quad
1/2\le u(\lambda)\le1/\dim_H(A_\lambda)\quad(\lambda\in\DD).
\]
In particular, the function $v:=u-1/2$ is also inf-sym-harmonic, and Lemma \ref{L:HarnackSym} yields
\[
v(ik) \ge \frac{1-k^2}{1+k^2}v(0) = \frac{1}{2} \frac{1-k^2}{1+k^2}.
\]
But also
\[
v(ik) \le \frac{1}{\dim_H(f_{ik}(A))} - \frac{1}{2} = \frac{1}{\dim_H(\Gamma)} - \frac{1}{2},
\]
and hence we obtain
\[
\dim_H(\Gamma) \le 1 + k^2,
\]
as required.
\end{proof}

\begin{remark}
In fact, the upper bound in Theorem~\ref{T:Smirnov} is not sharp, as recently proved by Oleg Ivrii \cite{Iv15}.
\end{remark}


\subsection{Quasisymmetric distortion spectrum}
In this subsection, we prove Theorem~\ref{T:PrauseSmirnov}. More precisely, we use Lemma \ref{L:heartMsymmetric} to estimate the Minkowski and packing dimensions of the image of a subset of the real line under a quasisymmetric map.

\begin{definition}
Let $k \in [0,1)$. A homeomorphism $g:\RR \to \RR$ is called $k$-\textit{quasisymmetric} if it extends to a normalized $k$-quasiconformal map $g: \CC \to \CC$ such that $g(z)=\overline{g(\overline{z})}$ for all $z \in \CC$.
\end{definition}

For the proof of Theorem~\ref{T:PrauseSmirnov}, we need the following Schwarz--Pick type inequality, see \cite[Lemma 2.2]{PS11}.

\begin{lemma}\label{L:SchwarzPick}
Let $\phi:\mathbb{D} \to \mathbb{D}$ be a holomorphic function. Suppose that $\phi(\lambda)=\overline{\phi(\overline{\lambda})}$ for all $\lambda \in \mathbb{D}$ and that $\phi(\lambda) \ge 0$ for all $\lambda \in (-1,1)$. Then
\[
\phi(k) \le \left( \frac{k+\sqrt{\phi(0)}}{1+k\sqrt{\phi(0)}} \right)^2 \qquad (0 \le k < 1).
\]
\end{lemma}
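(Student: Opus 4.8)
The plan is to reduce the inequality to the ordinary Schwarz--Pick lemma by passing to a holomorphic square root of $\phi$. Suppose first that $\phi$ admits a single-valued holomorphic square root $\Theta$ on $\DD$, with $\Theta^2=\phi$; this is automatic, for instance, whenever $\phi$ is zero-free, since $\DD$ is simply connected. Then $|\Theta|^2=|\phi|<1$ on $\DD$, so $\Theta:\DD\to\DD$ is a holomorphic self-map, and we may fix the branch so that $\Theta(0)=\sqrt{\phi(0)}\ge0$. Moreover, for $t\in(-1,1)$ we have $\Theta(t)^2=\phi(t)\ge0$, so $\Theta(t)$ is real and $|\Theta(t)|=\sqrt{\phi(t)}$.

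Next I would apply the Schwarz--Pick lemma to $\Theta$ at the points $0$ and $k$. Since $\Theta(0)=\sqrt{\phi(0)}$ is real, this gives
\[
\left|\frac{\Theta(k)-\Theta(0)}{1-\Theta(0)\Theta(k)}\right|\le k,
\]
that is, $\Theta(k)$ lies in the pseudo-hyperbolic disk of radius $k$ centred at $\sqrt{\phi(0)}$. Being a Euclidean disk, this set meets $\RR$ in an interval, whose endpoints are $\frac{\sqrt{\phi(0)}-k}{1-k\sqrt{\phi(0)}}$ and $\frac{\sqrt{\phi(0)}+k}{1+k\sqrt{\phi(0)}}$; an elementary computation (using $k<1$) shows that both endpoints have absolute value at most $\frac{k+\sqrt{\phi(0)}}{1+k\sqrt{\phi(0)}}$, and hence so does every point of the interval. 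Since $\Theta(k)$ is real, we conclude $\sqrt{\phi(k)}=|\Theta(k)|\le\frac{k+\sqrt{\phi(0)}}{1+k\sqrt{\phi(0)}}$, and squaring yields the stated bound. Note that the symmetry hypothesis enters only through the fact that it forces $\phi$ to be real on $(-1,1)$.

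The main obstacle is that a general $\phi$ need not possess a global holomorphic square root. The only genuine source of trouble is a zero of odd order lying off the real axis: a zero of $\phi$ at a point $x_0\in(-1,1)$ is automatically of even order, for otherwise $\phi$ would change sign near $x_0$, contradicting $\phi\ge0$ on $(-1,1)$. To handle the general case I would first reduce, via $\phi_r(\lambda):=\phi(r\lambda)$ and a passage to the limit $r\to1$, to a $\phi$ that is holomorphic past $\overline{\DD}$ with finitely many zeros, and then remove the off-real odd-order zeros by dividing out a suitable symmetric (conjugation-invariant) Blaschke product chosen so as not to destroy the hypotheses; the delicate point is controlling how the values at $0$ and $k$ change under this reduction so that one still recovers the bound with $\sqrt{\phi(0)}$. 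Alternatively, since the statement is already due to Prause and Smirnov, one may simply invoke \cite[Lemma 2.2]{PS11}.
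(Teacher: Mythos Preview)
The paper does not prove this lemma; it simply refers to \cite[Lemma~2.2]{PS11}. Your proposal therefore goes further than the paper, and your final fallback---invoke \cite{PS11}---is exactly what the paper does.

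Your argument under the assumption that $\phi$ admits a holomorphic square root is correct, as is your observation that zeros of $\phi$ on $(-1,1)$ are automatically of even order. (Incidentally, the symmetry hypothesis is actually \emph{equivalent} to $\phi$ being real on $(-1,1)$, by the identity principle, so your remark about how symmetry enters is exactly right.)

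However, the reduction you sketch for the general case does not close the gap. Suppose, after passing to $\phi_r$, you divide out the odd-order off-real zeros by the symmetric Blaschke product $B$ built from the conjugate pairs $z_j,\overline{z_j}$, and set $\psi:=\phi/B$. Each conjugate-pair factor of $B$ equals $|\lambda-z_j|^2/|1-\overline{z_j}\lambda|^2>0$ on $(-1,1)$, so $\psi$ again satisfies the hypotheses and now has a holomorphic square root; your argument then yields
\[
\psi(k)\le\Bigl(\frac{k+\sqrt{\psi(0)}}{1+k\sqrt{\psi(0)}}\Bigr)^2.
\]
Since $0<B<1$ on $(-1,1)$ we do get $\phi(k)=\psi(k)B(k)\le\psi(k)$, but we also have $\psi(0)=\phi(0)/B(0)>\phi(0)$, and because $x\mapsto(k+x)/(1+kx)$ is increasing on $[0,1)$, the resulting upper bound is strictly \emph{larger} than the one claimed. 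Thus the inequality you recover is weaker, not the desired one. A concrete test case is $\phi(\lambda)=(\lambda^2+a^2)/(1+a^2)$ with $0<a<1$: it satisfies all the hypotheses, has simple zeros at $\pm ia$ and hence no holomorphic square root on~$\DD$, yet one can check by hand that the stated inequality does hold for it. So the ``delicate point'' you flag is a genuine obstacle rather than a routine technicality; absent a different idea, citing \cite{PS11} (as the paper does) is the honest route.
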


\begin{proof}[Proof of Theorem~\ref{T:PrauseSmirnov}]
It is enough to prove the result for the  Minkowski dimension.
The case of the packing dimension then follows easily
by applying Proposition~\ref{P:Minkowskipacking}.

Let $g:\RR \to \RR$ be a $k$-quasisymmetric map, and let $A \subset \RR$ be a bounded set with $\overline{\dim}_M(A)=\delta$, where $0<\delta \le 1$. It suffices to show that $\overline{\dim}_M(g(A)) \ge \Delta(\delta,k)$, since the upper bound follows from the lower bound, replacing $g$ by $g^{-1}$ and using the definition of $\Delta^*(\delta,k).$

Extend $g$ to a normalized $k$-quasiconformal mapping $g:\CC \to \CC$ such that $g(z)=\overline{g(\overline{z})}$ for all $z \in \CC$. The Beltrami coefficient $\mu_g$ satisfies
\[
\mu_g(z) = \overline{\mu_g(\overline{z})} \quad (z \in \CC).
\]
Therefore, by a similar construction to that in the proof of Theorem~\ref{T:Smirnov},  there is a symmetric holomorphic motion $f:\mathbb{D} \times \CC \to \CC$ with $f_k=g$.
By Lemma~\ref{L:heartMsymmetric},
there exists an inf-sym-harmonic function $u$ on $\mathbb{D}$ such that
\[
u(0)=1/\overline{\dim}_M(A)=1/\delta
\qquad\text{and}\qquad
u(\lambda)\ge 1/\overline{\dim}_M(A_\lambda)\quad(\lambda \in \mathbb{D}).
\]
The function $v:=u-1/2$ is also inf-sym-harmonic, and we can write
\[
v(\lambda) = \inf_{h \in \mathcal{H}} h(\lambda),
\]
where each $h \in \mathcal{H}$ is a positive, symmetric harmonic function on $\mathbb{D}$.

Fix $h \in \mathcal{H}$. Since $h$ is harmonic and $h(\overline{\lambda})=h(\lambda)$ for all $\lambda \in \mathbb{D}$, there is a holomorphic function $H$ on $\mathbb{D}$ with $\operatorname{Re} H = h$ and $\overline{H(\overline{\lambda})} = H(\lambda)$ for all $\lambda \in \mathbb{D}$. Then $H$ maps $\mathbb{D}$ into the right half-plane. Also, for $\lambda \in (-1,1)$, we have
\[
H(\lambda) = h(\lambda) \ge v(\lambda) \ge \frac{1}{\overline{\dim}_M(A_\lambda)}-\frac{1}{2} \ge \frac{1}{2},
\]
since $A_\lambda \subset \RR$ by the symmetry of the holomorphic motion. It follows that the function
\[
\phi:= \frac{2H-1}{2H+1}
\]
satisfies the assumptions of Lemma \ref{L:SchwarzPick}, and we get
\[
\frac{2h(k)-1}{2h(k)+1} = \phi(k) \le \left( \frac{k+l'}{1+kl'} \right)^2,
\]
where $l'=\sqrt{\phi(0)}$. Using the fact that the functions $x \mapsto (2x-1)/(2x+1)$ and $x \mapsto (k+x)/(1+kx)$ are increasing, we obtain, after taking the infimum over all $h \in \mathcal{H}$,
\[
\frac{2v(k)-1}{2v(k)+1} \le \left( \frac{k+l}{1+kl} \right)^2,
\]
where
\[
l = \left( \frac{2v(0)-1}{2v(0)+1} \right)^{1/2} = \left( \frac{2(1/\delta - 1/2)-1}{2(1/\delta-1/2)+1} \right)^{1/2} = \sqrt{1-\delta}.
\]
Note that
\[
\frac{2v(k)-1}{2v(k)+1}
= \frac{2u(k)-2}{2u(k)}
= 1-\frac{1}{u(k)} \ge 1- \overline{\dim}_M(g(A)).
\]
This gives the desired inequality, namely
\[
\overline{\dim}_M(g(A)) \ge 1-\left( \frac{k+l}{1+kl} \right)^2= \Delta(\delta,k).\qedhere
\]
\end{proof}

\section{An open problem}\label{S:conclusion}

As remarked in the introduction, Theorems~\ref{T:packing}
and \ref{T:converse} between them provide a complete characterization
of the variation of the packing dimension of a set moving under a holomorphic motion.
Such a characterization for the Hausdorff dimension is currently lacking, due to the fact that the conclusion in Theorem~\ref{T:Hausdorff} is weaker than that in
Theorem~\ref{T:packing}. This naturally raises the following question.

\begin{question}\label{Q:Hausdorff}
Let $A$ be a subset of $\CC$ such that $\dim_HA>0$,
and let $f:\DD\times A\to\CC$ be a holomorphic motion.
Set $A_\lambda:=f_\lambda(A)$.
Then must $\lambda\mapsto 1/\dim_H(A_\lambda)$
be an inf-harmonic function on $\DD$?
\end{question}

The same question was posed 30 years ago in \cite{Ra93}.
As far as we know, it is still an open problem.

It was shown in \cite{Ra93} that the answer to Question~\ref{Q:Hausdorff} is affirmative in the following
special case.
Let $(R_\lambda)_{\lambda\in\DD}$
be a holomorphic family of hyperbolic rational maps.
Then the holomorphic motion $\lambda\mapsto J(R_\lambda)$
defined by their Julia sets has the property that
$1/\dim_H J(R_\lambda)$ is an inf-harmonic function on~$\DD$.
The proof relies on an explicit formula for the Hausdorff dimension,
namely the Bowen--Ruelle--Manning formula.

Another special case was established by Baribeau and Roy \cite{BR06}.
They showed that,
if $L_\lambda$ is the limit set of an iterated function system
of contractive similarities
depending holomorphically on a parameter $\lambda\in \DD$,
then, subject to a technical condition,
the map $\lambda\mapsto L_\lambda$ is a holomorphic motion for which
$1/\dim_H(L_\lambda)$ is an inf-harmonic function on~$\DD$.
Their proof also relies on an explicit formula for the the Hausdorff dimension,
this time the Hutchinson--Moran formula,
Theorem~\ref{T:simdim}.

In fact, in both these special cases, it turns out that the Hausdorff dimension coincides with the packing dimension,
so both results are now consequences of Theorem~\ref{T:packing},
without any recourse to explicit formulas for the dimension.

Finally, we remark that an affirmative answer to
Question~\ref{Q:Hausdorff} would imply that
$\lambda\mapsto\dim_H(A_\lambda)$ is a subharmonic function
(in much the same way that Corollary~\ref{C:subharmonic}
was proved for the packing and Minkowski dimensions).
Even this apparently weaker statement is also still an open problem.
As an interesting test case,
we pose the following question.

\begin{question}\label{Q:sh}
Does the Hausdorff dimension of  a holomorphic motion
$\lambda\mapsto A_\lambda$ always satisfy the inequality
\[
\dim_H(A_0)\le \max_{|\lambda|=1/2}\dim_H(A_\lambda)?
\]
\end{question}


\bibliographystyle{amsplain}
\bibliography{biblist.bib}

\end{document}